\def\C {{\mathbb C}}
\def\Z {{\mathbb Z}}
\def\R {\mathbb{R}}
\def\N {\mathbb{N}}
\def\d{{\rm d}}
\newtheorem{proposition}{Proposition}[section]
\newtheorem{theorem}[proposition]{Theorem}
\newtheorem{corollary}[proposition]{Corollary}
\newtheorem{lemma}[proposition]{Lemma}
\theoremstyle{definition}
\newtheorem{definition}[proposition]{Definition}
\newtheorem{remark}[proposition]{Remark}
\newtheorem{example}[proposition]{Example}
\numberwithin{equation}{section}
\def \no#1#2#3 {{\bf #1} (#3), #2.}
\def \eds#1#2#3 {#1, #2, #3.}
\title[observability inequality]
{Observability inequality, log-type Hausdorff content and heat equations}
\author[S. Huang]{Shanlin Huang}
\address{Shanlin Huang
\newline\indent
School of Mathematics and Statistics, Hubei Key Laboratory of Engineering Modeling and Scientific Computing, Huazhong University of Science and Technology,  Wuhan,  430074,  P.R. China}
\email{shanlin\_huang@hust.edu.cn}
\author[G. Wang]{Gengsheng Wang}
\address{Gengsheng Wang
\newline\indent
Center for Applied Mathematics, Tianjin University, Tianjin 300072, P.R. China}
\email{wanggs62@yeah.net}
\author[M. Wang ]
{ Ming Wang}
\address{Ming Wang
\newline\indent
School of Mathematics and Statistics, HNP-LAMA, Central South University, Chang-sha, Hunan 410083, P.R. China
}
\email{m.wang@csu.edu.cn}
\subjclass[2000]{}
\keywords{}
\begin{document}

\begin{abstract}
This paper studies observability inequalities for heat equations on both bounded domains and the whole space $\R^d$. The observation sets are measured by log-type Hausdorff contents, which are induced by certain log-type gauge functions closely related to the heat kernel. On a bounded domain, we derive the observability inequality for observation sets of positive log-type Hausdorff content. Notably, the  aforementioned inequality holds not only for all sets with Hausdorff dimension $s$ for any $s\in (d-1,d]$, but also for certain sets
of Hausdorff dimension $d-1$.
   On the whole space $\R^d$, we establish the observability inequality for observation sets that are thick at the scale of the log-type Hausdorff content.  Furthermore, we prove that for the 1-dimensional heat equation on an interval, the Hausdorff content we have chosen is an optimal scale for the observability inequality.

To obtain these observability inequalities, we use the adapted Lebeau-Robiano strategy from \cite{Duyckaerts2012resolvent}. For this purpose, we prove the following results at scale of the log-type Hausdorff content, the former being derived from the latter:
 We establish
 a spectral inequality/a Logvinenko-Sereda uncertainty principle; we set up a quantitative propagation of smallness of analytic functions;
 we build up a Remez' inequality;  and more
 fundamentally,  we provide
   an upper  bound for
  the log-type Hausdorff content of
  a set where a monic  polynomial is small,  based on an estimate in Lubinsky \cite{Lubinsky1997small}, which
is ultimately traced back to  the classical Cartan Lemma.
 In addition, we set up a capacity-based slicing lemma (related to the log-type gauge functions) and establish a quantitative relationship between Hausdorff contents and capacities. These tools are crucial in the studies of the aforementioned propagation of smallness in high-dimensional situations.

\end{abstract}
\maketitle


\tableofcontents

\section{Introduction}
The observability inequality of the heat equation represents a form of quantitative unique continuation, which
 is widely applicable in  control theory and inverse problems.

 We begin by considering  the following heat equation:
 \begin{align}\label{equ-heat-high-bd}
\left\{
\begin{array}{l}
  \partial_tu(t,x)=\Delta u(t,x),\quad\, t>0,x\in \Omega, \\
   u(t,x)=0,\quad\quad \quad \quad \quad  t\geq 0,x\in \partial\Omega,\\
  u(0,\cdot)\in L^2(\Omega),
\end{array}
\right.
\end{align}
where   $\Omega\subset \mathbb{R}^d$ (with $d\geq 1$) is a bounded domain with a smooth boundary $\partial\Omega$.
We observe a solution of the above equation over $[0,T]\times E$, where $T>0$ and $E\subset\Omega$.
  The standard observability inequality is stated as follows: For each $T>0$, there exists  an {\it observability  constant
$C_{obs}>0$},  depending only on $T$, $d$, $\Omega$ and $E$, such that any solution $u$ to \eqref{equ-heat-high-bd} satisfies
\begin{align}\label{equ-ob-intro}
   \|u(T,\cdot)\|_{L^2(\Omega)}\leq C_{obs}\left(\int_0^T\int_E|u(t,x)|^2\d x\d t \right)^{1/2}.
\end{align}
When the above  holds,  we call $E$ {\it an {\it observable set} of \eqref{equ-heat-high-bd}.}
 The same can be said about  the heat equation over $\mathbb{R}^d$.

There is a vast body of literature on the observability inequality of  heat equations. Here, we only recall some of them, which are most pertinent to the goal of the current paper.
When $E\subset\Omega$ is open,   the inequality \eqref{equ-ob-intro} was derived, in general, by a global Carleman estimate
\cite{FurIma, FuLvZhang}, or the spectral inequality method \cite{Lebeau1995contr}, or the frequency function method \cite{Phung2013anob}.
When $E\subset\Omega$ is a subset of positive $d$-dim Lebesgue measure,
 \eqref{equ-ob-intro} was first proved in \cite{ApraizNullcontrol2013} (see also \cite{Apraiz2014obser}), where the propagation of smallness from measurable sets for analytic functions plays a key role.
 For the heat equation over $\mathbb{R}^d$, where $E$ has a positive $d$-dim Lebesgue measure, \cite{Egidisharp2018,Wang2019observable}
  proved that
  the observability  inequality holds if and only if $E$ is a thick set.

 Recently, Burq and Moyano \cite{Burq2023propagation} established a new observability inequality for the heat equation on a $W^{2,\infty}$ compact manifold $(\Omega, g)$ of dimension $d$, where the observation set $E\subset\Omega$ is allowed to have zero $d$-dim Lebesgue measure.
 More precisely, it was shown that   if  $E$ is a subset of positive $(d-1+\delta)$-Hausdorff content for some $\delta\in(0,1)$ close to $1$,  then for each $T>0$, there is a positive constant $C_{obs}$, depending only on $T$, $d$, $\Omega$, $\delta$ and $E$, such that any solution $u$ to \eqref{equ-heat-high-bd}, with the  operator $-\Delta$ replaced by
 a uniform elliptic operator $-\mbox{div}(A(x)\nabla\cdot)$,
 satisfies
 \begin{align}\label{equ-ob-intro-2}
   \|u(T,\cdot)\|_{L^2(\Omega)}\leq C_{obs}\int_0^T\sup_{x\in E}|u(t,x)| \d t.
\end{align}
The result does not cover the scenario where $\delta$ is near $0$ for some technical reasons.
Indeed, their proof is based on the following spectral inequality (built up in  \cite{Burq2023propagation}):
\begin{align}\label{equ-ob-intro-4}
  \sup_{\Omega}|\phi|\leq Ce^{C\sqrt{\lambda}}\sup_E|\phi|\;\;\mbox{for each}\;\; \phi\in \mathcal{E}_\lambda,\,\,\lambda>0,
\end{align}
where $E$ and $\Omega$ are the same as above and $\mathcal{E}_\lambda$ denotes the space spanned by the eigenfunctions
of $-\Delta$ (with the zero Dirichelet boundary condition) corresponding to the eigenvalues
 not exceeding $\lambda$.
A crucial point  in proving \eqref{equ-ob-intro-4}, as detailed in
\cite{Burq2023propagation},  is the propagation of smallness for the gradient of solutions to $\mbox{div}(A(x)\nabla u)=0$, obtained in   \cite{Logunov2018quantitative}, which shows that
\begin{align}\label{equ-ob-intro-3}
\sup_{B_1}|\nabla u|\leq (\sup_{E}|\nabla u|)^\beta (\sup_{B_2}|\nabla u|)^{1-\beta},
\end{align}
where $\beta\in(0,1)$, and $E$ is a set of positive $(d-2+\delta)$-Hausdorff content for some $\delta\in(0,1)$ close to $1$. However, as mentioned in \cite{Logunov2018quantitative}, it remains unclear  whether \eqref{equ-ob-intro-3} holds for $\delta$ close to $0$.
Moreover, Le Balc'h and Martin \cite{LeBalch2403} proved that the observability inequality \eqref{equ-ob-intro-2} still holds  for parabolic equation $\partial_tu+Hu=0$ on bounded domains, where $H$ is the sum of a divergence elliptic operator and a bounded potential. They also obtained a corresponding result on the whole space $\R^d$.

 More recently, Green et al. \cite{walton2024observability}  refined the result in  \cite{Burq2023propagation,LeBalch2403} for the  heat equation (with constant coefficients) on $\Omega$.  In particular, they proved that
 any $E \subset \Omega$ with positive $(d-1+\delta)$-Hausdorff content (where $\delta>0$ can be arbitrarily small) is an observable set of \eqref{equ-heat-high-bd}.
 The proof also relies on   \eqref{equ-ob-intro-4}, while the main tool used to obtain it is the Bang-type estimate for analytic functions in terms of the transfinite diameter of a set.
As mentioned in \cite{walton2024observability}, \eqref{equ-ob-intro-4} can also be proved by  the propagation of smallness for solutions
to generalized Cauchy-Riemann systems in  \cite{Malinnikova2004propaga}. Moreover, the result in \cite{walton2024observability} is sharp in the sense that there exists a subset $E\subset\Omega$ of positive $(d-1)$-Hausdorff content such that \eqref{equ-ob-intro-2} fails.  Indeed, it can  be chosen as follows:
\begin{align}\label{equ-nodal}
    E:=\{x\in \Omega: \phi_\lambda(x)=0\},
\end{align}
where  $\phi_\lambda$ is an eigenfunction of the Laplacian corresponding to the eigenvalue $\lambda>0$.
It is well known that the $(d-1)$- Hausdorff content of the above $E$ is positive (see  \cite[Theorem 6.2.5]{HL}) or
 \cite{LMNN21}).

 From the main result in \cite{walton2024observability} mentioned above, we see that
 any $E \subset \Omega$
with Hausdorff dimension greater than $d-1$ is an observable set.\footnote{If a subset $E$ has  positive $(d-1+\delta)$-Hausdorff content, then it  has  positive $(d-1+\delta)$-Hausdorff measure, hence $\dim E \geq d-1+\delta$. Conversely, if $\dim E \geq d-1+\delta$, then the $(d-1+\frac{\delta}{2})$-Hausdorff measure of $E$ is infinite, and thus  the $(d-1+\frac{\delta}{2})$-Hausdorff content of $E$ is positive. When $E$ is a set with dimension less than $d$, we call it an observable set if \eqref{equ-ob-intro-2} holds. }
It deserves noting that the sets of Hausdorff dimension $d-1$ are in a critical case which is quite subtle.
 On one hand, there exist sets with Hausdorff dimension $d-1$ that are not observable, such as the set given by \eqref{equ-nodal}. On the other hand, some specific observable subsets (in $\mathbb{R}^d$) with Hausdorff dimension $d-1$ or even $d-2+\delta$ have been given in  \cite{Dolecki1973, Samb2015, walton2024observability}, however, these observable sets are not characterized  by any generalized Hausdorff content.\footnote{Here and in what follows, when we say  that observable sets are characterized by a generalized Hausdorff content $c_F$ (with $F$ a gauge function), we mean that if  $c_F(E)>0$, then $E$ is an observable set. Hence,
$c_F(E)>0$ serves as a sufficient condition for  $E$ to be an observable set, and $c_F$ plays an important role in this characterization.}

Thus, the following question arises naturally: \textit{Can we find a generalized Hausdorff content $c_F$, induced by an appropriate gauge function $F$, that characterizes the observable sets of the equation \eqref{equ-heat-high-bd}? Furthermore, whether this family of observable sets, characterized by $c_F$, includes some  sets with Hausdorff dimension $d-1$?}

In this paper,
we address this question by introducing
the
 Hausdorff content $c_{F_{\alpha,\beta}}$, induced by the following  log-type gauge function:
 \begin{align}\label{equ-intro-F}
F_{\alpha,\beta}(t)=t^{d-1}\Big(\log \frac{1}{t}\Big)^{-\beta}\Big(\log\log\frac{1}{t}\Big)^{-\alpha},\;\;t\in (0,e^{-3}],
\end{align}
where $\alpha\geq 0$ and $\beta\geq 0$.
The reasons for choosing  the above  gauge function are explained as follows:

First, $c_{F_{\alpha,\beta}}$ characterizes observable sets of the equation \eqref{equ-heat-high-bd}.
 In fact,
we
have shown that when $E\subset\Omega$  has a positive $F_{\alpha,\beta}$-Hausdorff content with $\beta=\frac{3}{2}$ and $\alpha>\frac{5}{2}$, it is an observable set of the equation \eqref{equ-heat-high-bd}  (see Theorem~\ref{thm-bound-high}).
 Moreover, the family of observable sets characterized by $c_{F_{\alpha,\beta}}$ contains not only all sets $E \subset \Omega$ with Hausdorff dimension $s$ for any $s\in (d-1,d]$, but also certain sets with Hausdorff dimension exactly $d-1$, which, notably, have apparently not been identified in previous research (see Remarks \ref{rmk-high-d} and \ref{rem-d-1}).

Second, in the one-dimensional case, the Hausdorff  content $c_{F_{\alpha,\beta}}$
serves as a  sharp scale  for observable sets of the heat equation \eqref{equ-heat-high-bd} in the following sense. On one hand, we show that $E\subset \Omega$ is an observable set if  $E$ has a positive $F_{\alpha,\beta}$-Hausdorff content, where $\beta = \frac{1}{2}$ and $\alpha > \frac{3}{2}$ (see Theorem \ref{thm-ob-bound} (i)). On the other hand, for each $\varepsilon>0$, we construct  a subset $E_{\infty}$   possessing a positive $F_{0,\frac{1}{2+\varepsilon}}$-Hausdorff content, which is not an observable set of \eqref{equ-heat-high-bd} (see Theorem \ref{thm-ob-bound} (ii) or Proposition \ref{thm-1017-1.1}). This means that $F_{0,\frac{1}{2}}(t) = \left(\log \frac{1}{t}\right)^{-\frac{1}{2}}$ is a critical gauge function. It is interesting to note that the inverse function of $F_{0, \frac{1}{2}}$ satisfies
$$
F^{[-1]}_{0,\frac{1}{2}}\left(\frac{1}{x}\right) = e^{-x^2},
$$
which aligns to the heat kernel.

Third, we are motivated by the study of some classical problems. For example,
Brownian motion trajectories in $\R^d$ have positive  $h$-Hausdorff measures with  gauge functions
$$
h(t)=
\left\{
\begin{array}{ll}
t^2\log\log \frac{1}{t},     &  \mbox{ if } d\geq 3, \\
t^2\log \frac{1}{t}\log\log\log \frac{1}{t},    &
 \mbox{ if } d=2
\end{array}
\right.
$$
for $t$ small, see \cite{C-Taylor1962} and  \cite[p.60]{Mattila1995geometry}.
Besides, Besicovitch sets in $\R^2$
  also exhibit a generalized Hausdorff dimension lies between $t^2\log\frac{1}{t}$ and $t^2(\log \frac{1}{t})(\log\log \frac{1}{t})^{2+\varepsilon}$ for any   $\varepsilon>0$, see  \cite{Keich1999,ChenYanZhong}.

\section{Main results}\label{mainresult-w-10-29}
This section presents the main results of this paper. The results for the $d$-dim case ($d\geq 2$)  are slightly weaker than those for the $1$-dim case, due to some technical  reasons which will be explained in the note $(ii)$ of Remark \ref{remark1.11-w-9-22}. Thus, we discuss them separately.

\subsection{The case of $1$-dim}\

 We first consider the  heat equation on the interval $(0,L)$ with $L>0$:
\begin{align}\label{equ-heat-b}
\left\{
\begin{array}{l}
  \partial_tu(t,x)=\partial_x^2 u(t,x),\quad t>0,x\in(0,L),  \\
  u(t,0)=u(t,L)=0, \quad t\geq 0,\\
  u(0,\cdot)\in L^2(0,L).
\end{array}
\right.
\end{align}
For each $\alpha\geq 0$, we define the following gauge function (see Definition \ref{def-1}):
\begin{align}\label{1.7w9-21}
h_{\alpha}(t):=(\log \frac{1}{t})^{-\frac{1}{2}}(\log\log\frac{1}{t})^{-\alpha}, \quad 0<t\leq e^{-3}.
\end{align}
We denote by  $c_{h_{\alpha}}$ the corresponding  Hausdorff content, with further details provided  in Section \ref{section 2}.
\begin{theorem}\label{thm-ob-bound}
$(i)$ Let $\alpha>\frac{3}{2}$ and let $E\subset (0,L)$ be a subset with  $c_{h_{\alpha}}(E)>0$. Then for each $T>0$, there is
$C_{obs}>0$, depending only on $T$, $L$ and $c_{h_{\alpha}}(E)$, such that any solution $u$ to \eqref{equ-heat-b} satisfies
\begin{equation}\label{thm-ob-1024h-1}
   \|u(T,\cdot)\|_{L^2(0,L)}\leq C_{obs}\int_0^T\sup_{x\in E}|u(t,x)|\d t.
\end{equation}

$(ii)$ For every $\varepsilon>0$, there is a subset $E_{\infty}\subset (0, L)$ of positive $(\log{\frac{1}{t}})^{-\frac{1}{2+\varepsilon}}$-Hausdorff content such that  \eqref{thm-ob-1024h-1}, with $E$ replaced by $E_{\infty}$, fails.
\end{theorem}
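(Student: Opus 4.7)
The plan is to reduce (i) to a spectral inequality at the scale of $c_{h_\alpha}$ via the adapted Lebeau--Robbiano/Duyckaerts--Miller machinery of \cite{Duyckaerts2012resolvent}, and then to establish that spectral inequality through a polynomial-approximation / Remez / Cartan chain; for (ii), I would construct a generalized Cantor counterexample together with a suitable sequence of concentrated heat solutions. More precisely, for (i), using \cite{Duyckaerts2012resolvent} as a black box, it suffices to prove that, for some $C>0$ depending only on $L$, $\alpha$ and $c_{h_\alpha}(E)$,
\[
\|\phi\|_{L^\infty(0,L)}\leq C\,e^{C\sqrt\lambda}\,\sup_{x\in E}|\phi(x)|\qquad\text{for every }\phi\in\mathcal{E}_\lambda,\ \lambda>0,
\]
where $\mathcal{E}_\lambda$ is the span of Dirichlet eigenfunctions $\sin(k\pi\cdot/L)$ with $(k\pi/L)^2\leq \lambda$. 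Any such $\phi$ extends to an entire function of exponential type at most $\sqrt\lambda$, which brings the problem into the analytic-function framework advertised in the abstract.

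To prove this spectral inequality at the $c_{h_\alpha}$-scale, the plan is a three-step reduction. First, localize $\phi$ to a short interval $I\subset(0,L)$ on which $E\cap I$ still carries a definite share of $c_{h_\alpha}(E)$. Second, approximate $\phi|_I$ by a polynomial $P$ of degree $n\sim \sqrt\lambda\,|I|$ via Taylor truncation, with the remainder bounded by Cauchy's formula using the exponential type. Third, compare $\sup_I|P|$ with $\sup_{E\cap I}|P|$ via a Remez-type inequality, which itself reduces to the Cartan-type bound asserting that for a monic polynomial $P$ of degree $n$ the $c_{h_\alpha}$-content of $\{|P|\leq \delta^n\}$ is quantitatively controlled by $\delta$. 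The abstract indicates that this last bound follows from Lubinsky's refinement \cite{Lubinsky1997small} of Cartan's lemma, sharp enough so that the $(\log\log 1/t)^{-\alpha}$ factor is absorbed precisely when $\alpha>3/2$. Summing the local estimates over a partition of $(0,L)$ into intervals of size $\sim 1/\sqrt\lambda$ then yields the desired spectral inequality.

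For (ii), the plan is to construct $E_\infty$ as a generalized Cantor set in $(0,L)$ whose scales $r_k\downarrow 0$ and branching numbers are calibrated, through a Frostman-type mass-distribution argument, so that $c_F(E_\infty)>0$ for $F(t)=(\log 1/t)^{-1/(2+\varepsilon)}$, while simultaneously guaranteeing that at each level $k$ there is a point $x_k^\ast\in(0,L)$ whose distance to $E_\infty$ is comparable to $r_k$. Associated to each $k$, take an initial datum $u_0^{(k)}$ (a spectrally truncated near-Gaussian packet) concentrated at $x_k^\ast$, and study the corresponding solution $u^{(k)}$ of \eqref{equ-heat-b}: heat-kernel bounds give $\sup_{x\in E_\infty}|u^{(k)}(t,x)|\lesssim \exp(-c r_k^2/t)$, while $\|u^{(k)}(T)\|_{L^2(0,L)}$ stays of order one. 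Since the inverse gauge satisfies $F^{[-1]}(1/x)=\exp(-x^{2+\varepsilon})$, which decays strictly faster than the heat-kernel scale $\exp(-x^2)$, an appropriate calibration of $T$ and $r_k$ forces the ratio in \eqref{thm-ob-1024h-1} to blow up along the sequence $u^{(k)}$, ruling out \eqref{thm-ob-1024h-1} for $E_\infty$.

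The hardest step is the Cartan--Lubinsky estimate in (i): extracting a polynomial sublevel-set bound in the $c_{h_\alpha}$-gauge that is sharp enough to propagate through the dyadic partition without losing more than the allowed $(\log\log 1/t)^{-\alpha}$ factor, and whose constants depend on $E$ only through $c_{h_\alpha}(E)$ itself. In (ii), the delicate part is arranging the Cantor construction so that $E_\infty$ simultaneously has positive $F$-content and stays far enough from each $x_k^\ast$ for the heat-kernel concentration to survive; matching these two scales is precisely what encodes the optimality of the gauge $(\log 1/t)^{-1/2}$ for $1$-dim heat observability.
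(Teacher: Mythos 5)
Your overall architecture for part (i) --- Cartan/Lubinsky $\Rightarrow$ level-set bound $\Rightarrow$ Remez at the $c_{h_\alpha}$ scale $\Rightarrow$ propagation of smallness $\Rightarrow$ spectral inequality $\Rightarrow$ Lebeau--Robbiano --- matches the paper's, but the reduction you state is not the one that is actually provable, and this is a genuine gap. At the scale of $c_{h_\alpha}$ the Remez constant is $12^n\exp\{Cn^2(\log(n+e))^{-2\alpha/3}/c_{h_\alpha}^2(E)\}$ (note the $n^2$, not $n$, in the exponent; see Lemma \ref{lem-remez}), so applying it to functions in $\mathcal{E}_\lambda$ (degree, or zero count, $n\sim\sqrt{\lambda}$) yields a spectral inequality with cost $e^{C\sqrt{\lambda}}\exp\{C\lambda(\log(\lambda+e))^{-2\alpha/3}/c_{h_\alpha}^2(E)\}$ (Lemma \ref{lem-spec}), \emph{not} $Ce^{C\sqrt{\lambda}}$. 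This distinction is the whole point of the theorem: a cost of order $e^{C\lambda}$ is exactly borderline against the dissipation $e^{-\lambda t}$, so the standard Lebeau--Robbiano machinery cannot be invoked as a black box. One must run the adapted iteration with time steps $\tau(\lambda)\sim\log\log\lambda/(\log\lambda)^{2\alpha/3}$ along a geometric sequence $\lambda_k$, and the hypothesis $\alpha>\tfrac32$ is used precisely (and only) to make $\sum_k\tau(\lambda_k)<\infty$. Your proposal instead attributes the condition $\alpha>\tfrac32$ to the Cartan--Lubinsky sublevel-set estimate, which in fact holds for every $\alpha>0$; with your claimed $e^{C\sqrt{\lambda}}$ spectral inequality the conclusion would follow for all $\alpha\geq 0$, contradicting the optimality asserted in part (ii). As written, the proof of (i) therefore rests on an unobtainable intermediate inequality and omits the step where the hypothesis on $\alpha$ actually enters.

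Part (ii) also has a gap: spatial concentration of heat packets cannot defeat observability for a fixed $T>0$. Your bound $\sup_{E_\infty}|u^{(k)}(t,\cdot)|\lesssim\exp(-cr_k^2/t)$ is only useful when $r_k^2/t$ is large, but the Cantor gap scales satisfy $r_k\downarrow 0$ while $t$ ranges up to the fixed $T$, so $\exp(-cr_k^2/T)\to 1$ and the ratio in \eqref{thm-ob-1024h-1} does not blow up; equivalently, diffusion over time $T$ smooths the solution to scale $\sqrt{T}\gg r_k$, erasing any sub-macroscopic separation between $x_k^\ast$ and $E_\infty$. The mechanism that works is arithmetic rather than metric: the paper takes $E_\infty$ inside the Liouville-type set of $x$ with $\|q_kx\|<q_k e^{-q_k^{2+\varepsilon_1}}$ for a rapidly growing sequence $q_k$, and tests with the exact eigenfunction solutions $u_{q_k}(t,x)=e^{-q_k^2\pi^2t}\sin(q_k\pi x)$. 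Then $|\sin(q_k\pi x)|\le\pi\|q_kx\|\le\pi q_ke^{-q_k^{2+\varepsilon_1}}$ on $E_\infty$, which beats the retained $L^2$ norm $e^{-q_k^2\pi^2T}$ for every fixed $T$, while a Frostman mass-distribution argument (the part of your plan that is sound) gives $c_{f_\varepsilon}(E_\infty)>0$. Without replacing the heat-kernel concentration by this eigenfunction/Diophantine mechanism, your counterexample does not go through.
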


\begin{remark}\label{remark1.2w}
$(i)$
The first conclusion in
Theorem \ref{thm-ob-bound}
says that by measuring a solution  over $(0,T)\times E$, one can recover its value
in $L^2(0,L)$ at time $T$, and then
recover its initial datum by the backward uniqueness of the heat equation.
It yields the null controllability for the heat equation \eqref{equ-heat-b}, which will be precised in Section
\ref{section-contr}.

$(ii)$
 The second conclusion in Theorem \ref{thm-ob-bound}
 exhibits
that  the Hausdorff content induced by the
 $\log$-type gauge function $h_0(t)=(\log{\frac{1}{t}})^{-\frac{1}{2}}$
 is the optimal scale for measuring the observation set
of the equation \eqref{equ-heat-b}.  
It is also critical from the perspective of spectral inequality. Indeed,
selecting $(\log{\frac{1}{t}})^{-\frac{1}{2}}$ as the gauge function leads to a spectral inequality of the form
\begin{equation*}
	\sup_{\Omega}|\phi|\leq Ce^{C\lambda}\sup_E|\phi|\;\;\mbox{for each}\;\; \phi\in \mathcal{E}_\lambda,
\end{equation*}
where $E$ is a subset of positive $(\log{\frac{1}{t}})^{-\frac{1}{2}}$-Hausdorff content.  (See \eqref{spec-ine-1-dim-9-23-w} and \eqref{eq-1019h-01}, with $\alpha=0$.)
Note that the growth rate $e^{C\lambda}$ is exactly balanced with the dissipative effect of the heat equation, thus the well-known Lebeau-Robbiano strategy  fails in this critical case. It is this reason that  we have incorporated a $\log\log$ factor into the gauge function, thereby obtaining a spectral inequality/an uncertainty principle of the forms \eqref{spec-ine-1-dim-9-23-w}/\eqref{eq-1019h-01}, which
enables us to apply the Lebeau-Robbiano strategy.

Our construction of $E_\infty$ strongly relies on the explicit expression of the heat kernel and the intimate connection between the gauge function $F_{0,\frac{1}{2}}(t) = \left(\log \frac{1}{t}\right)^{-\frac{1}{2}}$ and the heat kernel, as mentioned in the introduction.


\end{remark}

Next, we consider the following 1-dim heat equation over $\R$:

\begin{align}\label{equ-heat-1-dim-w}
\left\{
\begin{array}{l}
  \partial_t u(t,x)=\partial_x^2 u(t,x), \quad t>0,x\in \R,\\
   u(0,\cdot)\in L^2(\R).
\end{array}
\right.
\end{align}
To state the corresponding observability inequality, we need the following definition:
\begin{definition}\label{thich-set-9-21-w}
Let $f$ be a gauge function and $c_f$ be the Hausdorff content defined by $f$.
 We say that a subset $E\subset\R$ is  $f$-$(\gamma, L)$ thick, if there are positive constants $\gamma$ and $ L$ such that
$$
\inf_{x\in \R}c_{f}(E\cap[x,x+ L])\geq \gamma  L.
$$
\end{definition}

\begin{remark}\label{remark-content-thik-set}
$(i)$ Definition \ref{thich-set-9-21-w} broadens the concept of thick set from sets with positive 1-dim Lebesgue measure to
those with  positive $c_f$-Hausdorff content. In particular, when $f(t)=t,\;t>0$, an $f$-$(\gamma, L)$ thick set coincides with a classical $(\gamma, L)$
thick set, see \cite{Wang2019observable}.

$(ii)$  Definition \ref{thich-set-9-21-w} can be extended to sets in $\mathbb{R}^d$ ($d\geq 2$)  as follows:
A subset $E\subset \R^d$ is said to be $f$-$(\gamma, L)$ thick, if
$$
\inf_{x\in \R^d}{c_{f}(E\cap Q_{ L}(x))}\geq \gamma|Q_{ L}(0)|,
$$
where $Q_{ L}(x)$ denotes the closed cube in $\R^d$ centered at $x$ with side length $L$, and $|Q_{ L}(0)|$
denotes the volume of $Q_{ L}(0)$.
\end{remark}
\begin{theorem}\label{thm-ob-R}
Let $\alpha>\frac{3}{2}$ and let $E$ be an $h_\alpha$-$(\gamma, L)$ thick set for some constants $\gamma,  L>0$. Then for each $T>0$, there is
$C_{obs}>0$, depending only on $T$, $\alpha$, $E$,  $\gamma$ and $L$, such that any solution $u$ to \eqref{equ-heat-1-dim-w}
satisfies
\begin{align}\label{2.5-w-10-26}
\|u(T,\cdot)\|_{L^2(\R)}\leq C_{obs}\int_0^T\Big\|\big\{\sup_{x\in E\cap [k,k+ L]}|u(t,x)|\big\}_{k\in \Z}\Big\|_{l^2(\Z)}\d t.
\end{align}
\end{theorem}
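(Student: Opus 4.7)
The plan is to adapt the Lebeau-Robbiano/Duyckaerts-Miller strategy (used throughout this paper) to the whole-space setting. The two pillars are a global $h_\alpha$-adapted Logvinenko-Sereda uncertainty principle on $\R$ and the standard dissipation estimate for the heat semigroup $e^{t\partial_x^2}$.

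First I would establish the uncertainty principle: for each $\lambda>0$ large enough and every $f \in L^2(\R)$ whose Fourier transform is supported in $[-\sqrt{\lambda}, \sqrt{\lambda}]$,
\begin{equation*}
\|f\|_{L^2(\R)}^2 \;\leq\; \Phi(\lambda) \sum_{k\in\Z} \sup_{x \in E\cap[k, k+L]} |f(x)|^2,
\end{equation*}
with $\Phi(\lambda)$ of strictly sub-exponential growth, the precise rate being inherited from the $h_\alpha$-scale spectral inequality / propagation-of-smallness estimate built earlier in the paper. On each translate $[k, k+L]$, such an $f$ is the trace of an entire function of exponential type $\sqrt{\lambda}$, so the $h_\alpha$-scale propagation-of-smallness estimate applies, and the uniform thickness hypothesis $\inf_x c_{h_\alpha}(E\cap[x,x+L]) \geq \gamma L$ guarantees its constants are independent of $k$. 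Converting the resulting $L^\infty([k,k+L])$ bound to $L^2([k,k+L])$ on each piece and summing in $k$ produces the global estimate.

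Next I would run the Lebeau-Robbiano telescoping. Splitting $u(t) = u^{\leq \lambda_j}(t) + u^{> \lambda_j}(t)$ via the spectral projection of $-\partial_x^2$ at a threshold $\lambda_j$, the low-frequency piece $u^{\leq \lambda_j}(t)$ has Fourier support in $[-\sqrt{\lambda_j}, \sqrt{\lambda_j}]$ for every $t$, so the uncertainty principle applies pointwise in time and, after integration, is controlled by the observation term appearing in the right-hand side of \eqref{2.5-w-10-26}; the high-frequency piece satisfies $\|u^{> \lambda_j}(t)\|_{L^2} \leq e^{-\lambda_j(t-s)} \|u(s)\|_{L^2}$. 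Along a dyadic schedule $\lambda_j = 2^j \lambda_0$ and a geometric sequence of sub-intervals $[t_j, t_{j+1}] \subset [0,T]$, this yields a recursive control of $\|u^{\leq \lambda_j}(t_j)\|_{L^2}$ in terms of the observation term on $[t_{j-1}, t_j]$ plus $\|u^{\leq \lambda_{j+1}}(t_{j+1})\|_{L^2}$; summing the telescoped inequality gives \eqref{2.5-w-10-26}.

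The main obstacle is that the telescoping converges only if $\Phi(\lambda)$ is strictly sub-exponential. If $\Phi(\lambda) \sim e^{C\lambda}$, which is the critical case associated with the gauge $h_0$, the observation cost exactly cancels dissipation and the series diverges; this is precisely the subtlety flagged in Remark~\ref{remark1.2w}. The hypothesis $\alpha > \tfrac{3}{2}$ enters at this point, supplying the necessary logarithmic gain in $\Phi(\lambda)$ through the $(\log\log\tfrac{1}{t})^{-\alpha}$ factor in $h_\alpha$, so that the telescoped geometric series is summable for every $T > 0$.
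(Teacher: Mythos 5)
Your proposal follows essentially the same route as the paper: the $h_\alpha$-scale Logvinenko--Sereda uncertainty principle for functions band-limited to $[-\sqrt{\lambda},\sqrt{\lambda}]$ (Lemma \ref{lem-LS}, giving the cost $e^{C\lambda(\log \lambda)^{-2\alpha/3}}$) combined with the adapted Lebeau--Robbiano iteration, with $\alpha>\tfrac{3}{2}$ entering exactly where you say, namely through the summability of the time steps $\tau_j\sim(\log\lambda_j)^{-2\alpha/3}\log\log\lambda_j$ along the geometric frequency schedule. The one caveat is that your one-line derivation of the uncertainty principle --- applying propagation of smallness directly on every translate $[k,k+L]$ --- does not work as stated, because the ratio $\sup_{D_5}|f|/\sup_{[k,k+L]}|f|$ is not bounded by $e^{C\sqrt{\lambda}}$ uniformly in $k$ for a general band-limited $L^2$ function; one needs Kovrijkine's good/bad interval decomposition (as in the paper's proof of Lemma \ref{lem-LS}) to restrict to intervals where the local Bernstein bounds hold, while showing the bad intervals carry at most half of the $l^2L^\infty$ norm.
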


\begin{remark}\label{remark-strategy-thm1} (Strategy for proving Theorem \ref{thm-ob-bound})
We outline our proof strategy in the following steps.
\vskip 5pt

 \noindent {\it Step 1: An upper bound on level set.} We establish the following  upper  bound for
  the log-type Hausdorff content of
  a set where a monic  polynomial is small (i.e.,
Lemma \ref{lem-ch}):
If $P$ is a moinc polynomial of degree $n$,  then for each $\alpha>0$,
\begin{align}\label{fundamental-estimate-w-9-23}
  c_{h_\alpha}(P;\delta)\lesssim (\log \frac{1}{4\delta})^{-\frac{1}{2}}  n^{\frac12} (\log (n+e))^{-\alpha/3},\;\;\mbox{when}\;\;0<\delta \ll 1,
\end{align}
where
$ c_{h_\alpha}(P;\delta):=c_{h_\alpha}(\{z\in \C: |P(z)|\leq \delta^n\})$.
 The proof of \eqref{fundamental-estimate-w-9-23} is based on an estimate (in Lemma \ref{lem-abs}) about the generalized Hausdorff content of a set where a monic polynomial is small. The latter
is ultimately traced back to  the classical Cartan Lemma.
\vskip 5pt

\noindent {\it Step 2: Log-type Remez's inequality}. Based on \eqref{fundamental-estimate-w-9-23}, we derive  the following log-type Remez's inequality (i.e.,  Lemma \ref{lem-remez}): If $\alpha>0$ and $E\subset \R$ is a subset with
$c_{h_\alpha}(E)>0$, then there is $C>0$, depending only on $\alpha$, such that for all polynomials $P$ of degree $n$,
$$
\max_{|z|\leq 1}|P(z)|\leq  12^n\exp\Big\{\frac{C n^2(\log (n+e))^{-\frac{2\alpha}{3}}}{c^2_{h_\alpha}(E)}\Big\}\max_{E}|P(z)|.
$$
\vskip 5pt
\noindent {\it Step 3: Quantitative propagation of smallness.} We apply the inequality obtained in {\it Step 2 } to establish the quantitative propagation of smallness for analytic function
 at the scale of $c_{h_\alpha}$ (i.e., Lemma \ref{lem-ana}):
  If $\alpha>0$ and $E\subset I$ (where $I$ is a unit-length
  interval containing $0$) is a subset with
$c_{h_\alpha}(E)>0$,  then there is $C>0$, depending only on $\alpha$, such that
any analytic function $\phi:D_5(0)\mapsto \C$ with $|\phi(0)|\geq 1$ satisfies
\begin{align}\label{equ-ob-intro-6}
 \sup_{x\in I}|\phi(x)|\leq M^9\exp\Big\{\frac{C (\log M)^2(\log (\frac{\log M}{\log 2}+e))^{-\frac{2\alpha}{3}}}{c^2_{h_\alpha}(E)}\Big\}\sup_{x\in E}|\phi(x)|,
\end{align}
where $M=\sup_{z\in D_4(0)}|\phi(z)|$.

\vskip 5pt

\noindent {\it Step 4: Spectral inequality.} We use \eqref{equ-ob-intro-6} to build up the following  spectral inequality at the scale of $c_{h_\alpha}$ (i.e., Lemma \ref{lem-spec}):
If $\alpha>0$ and $E\subset (0,L)$ is a subset with $c_{h_\alpha}(E)>0$, then there is a numerical constant $C>0$ such that
when  $\lambda >0$ and $\phi\in \mathcal {E}_\lambda$,
\begin{align}\label{spec-ine-1-dim-9-23-w}
\sup_{x\in [0,L]}|\phi(x)|\leq e^{C(L+1)^2\sqrt{\lambda}}\exp\{\frac{C(L+1)^2\lambda(\log  (\lambda+e) )^{-\frac{2\alpha}{3}}}{c^2_{h_\alpha}(E)}\}\sup_{x\in E}|\phi(x)|.
\end{align}

\vskip 5pt
\noindent  {\it Step 5: Proof of Theorem \ref{thm-ob-bound}}.
We  employ the   adapted Lebeau-Robiano strategy from \cite{Duyckaerts2012resolvent},  together with \eqref{spec-ine-1-dim-9-23-w},
to prove Theorem \ref{thm-ob-bound}. It is  in this step that the condition $\alpha>\frac{3}{2}$ is required.
\end{remark}

\begin{remark}\label{remark-strategy-thm5} (Strategy for proving  Theorem \ref{thm-ob-R})
We employ a similar five-step approach as outlined in Remark \ref{remark-strategy-thm1},
 with significant modifications in Step 4. Below, we explain these adjustments in detail.


 The counterpart of the Lebeau-Robbiano spectral inequality (see
\cite{Lebeau1995contr})  on the whole space is   the Logvinenko-Sereda uncertainty principle (see, e.g. note $(d1)$ in \cite{Wang2019observable}). Thus, our task is to establish an analogous Logvinenko-Sereda uncertainty principle where the Lebesuge measure
is replaced by the Hausdorff content $c_{h_\alpha}$.

When the observation set has zero  Lebesuge measure, we cannot operate in the space $L^2(\R)$, which is the standard working space for  the Logvinenko-Sereda uncertainty principle. Instead, we choose $l^2L^\infty(\R)$, the
amalgams of {$L^\infty$} and {$l^2$}, as our working space in the current case. With the help of the concept of $h_\alpha$-$(\gamma, L)$ thick sets (see
Definition \ref{thich-set-9-21-w} and $(ii)$ in Remark \ref{remark-content-thik-set}), we build up the following uncertainty principle (i.e., Lemma \ref{lem-LS}):
If $\alpha>0$ and $E\subset\R$ is  $h_\alpha$-$(\gamma, L)$ thick, then there is $C>0$,
 depending only on $\alpha$, $ L$ and $\gamma$,
 such that each $v\in l^2L^\infty$,
with $\emph{supp } \widehat{v}\subset [-N,N]$ for some $N>0$, satisfies
\begin{equation}\label{eq-1019h-01}
\|v\|_{l^2L^\infty}\leq (4C( L+1))^{C LN}e^{CN}\exp\Big\{\frac{CN^2(\log
(\frac{CN}{\log 2}+e))^{-\frac{2\alpha}{3}}}{\gamma^2}\Big\}\big\|\big\{\sup_{E\cap [k,k+L]}|v|\big\}_{k\in \Z}\big\|_{l^2(\Z)}.
\end{equation}
\end{remark}

\subsection{The case of $d$-dim, with $d\geq 2$}\



We first consider the heat equation \eqref{equ-heat-high-bd} on a bounded, smooth domain $\Omega$. Let $F_{\alpha,\beta}$ be the gauge function defined in \eqref{equ-intro-F} and let $c_{F_{\alpha,\beta}}$ be the corresponding Hausdorff
content.
\begin{theorem}\label{thm-bound-high}
Let  $\alpha>\frac{5}{2}$ and $\beta= \frac{3}{2}$.
Let $E\subset\Omega$ be a compact subset  with  $c_{F_{\alpha,\beta}}(E)>0$. Then  for each $T>0$, there is $C_{obs}>0$, depending only on $T$, $d$, $\Omega$ and $c_{F_{\alpha,\beta}}(E)$,  such that
any solution $u$ to \eqref{equ-heat-high-bd} satisfies
$$
\|u(T,\cdot)\|_{L^2(\Omega)}\leq C_{obs}\int_0^T\sup_{x\in E}|u(t,x)|\d t.
$$
\end{theorem}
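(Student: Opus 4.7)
My plan is to follow the five-step strategy of Remark \ref{remark-strategy-thm1}, adapted to dimension $d\geq 2$. The endgame is a spectral inequality at the scale of $c_{F_{\alpha,\beta}}$,
$$
\sup_{\Omega}|\phi|\leq e^{C\sqrt{\lambda}}\exp\Big\{\frac{C\lambda(\log(\lambda+e))^{-2\alpha/3}}{c_{F_{\alpha,\beta}}(E)^{2}}\Big\}\sup_{E}|\phi|,\qquad \phi\in\mathcal{E}_\lambda,
$$
which is then fed into the adapted Lebeau-Robbiano machinery of \cite{Duyckaerts2012resolvent} to yield the observability inequality. To set up this spectral inequality, lift $\phi=\sum_{\lambda_j\leq\lambda}a_j\phi_j\in\mathcal{E}_\lambda$ to the harmonic, real-analytic function
$$
v(x,y)=\sum_{\lambda_j\leq\lambda}a_j\,\frac{\sinh(\sqrt{\lambda_j}\,y)}{\sqrt{\lambda_j}}\,\phi_j(x),\qquad (x,y)\in\Omega\times\mathbb{R},
$$
which satisfies $\partial_y v(\cdot,0)=\phi$ and the elliptic bound $\|v\|_{L^\infty(\Omega\times[-1,1])}\leq e^{C\sqrt{\lambda}}\|\phi\|_{L^2(\Omega)}$. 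Since $\phi=\partial_yv(\cdot,0)$, the desired spectral inequality reduces to a quantitative propagation of smallness for $v$ from $E\times\{0\}$ to $\Omega\times\{0\}$ at the scale of $c_{F_{\alpha,\beta}}$, with Carleman-type growth controlled by $\sqrt{\lambda}$.

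The principal obstacle is this propagation of smallness in dimension $\geq 2$, since the $1$-dim tools (Steps 1--3 of Remark \ref{remark-strategy-thm1}) manipulate polynomials on a single line. I would attack it via the capacity-based slicing lemma advertised in the abstract: the $t^{d-1}$ prefactor in $F_{\alpha,\beta}$ is precisely what a Frostman-type mass distribution on $E$ needs in order to guarantee that, for a positive-measure family of affine lines $\ell$, the restriction $E\cap\ell$ has positive $c_{h_{\alpha'}}$-content in the $1$-dim sense of \eqref{1.7w9-21}, with $\alpha'$ controlled by $\alpha$ and $\beta$. Translating between $c_{F_{\alpha,\beta}}$ and a suitable logarithmic capacity on $\mathbb{R}^d$, together with the choice $\beta=3/2$ rather than $1/2$, exactly budgets the extra $\log$-loss incurred by the slicing reduction. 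On each rich slice, apply the $1$-dim analytic propagation of smallness of Step 3 in Remark \ref{remark-strategy-thm1} to the restriction of $v$ to the complexified line; then integrate over the slice family and iterate over a chain of balls covering $\Omega\times\{0\}$, with the growth across the chain absorbed into the $e^{C\sqrt{\lambda}}$ factor.

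Finally, with the spectral inequality at hand, the adapted Lebeau-Robbiano strategy of \cite{Duyckaerts2012resolvent} gives the observability inequality. The threshold $\alpha>5/2$ (versus $\alpha>3/2$ in $1$-dim) surfaces at precisely this stage: the dyadic-frequency telescoping sum in that strategy must converge even after absorbing the extra $\log$-loss from the slicing reduction, and $\alpha=5/2$ is the borderline where this summability just fails. Verifying this summability is the final technical checkpoint, and together with the propagation of smallness above it pins down the range $\alpha>5/2$, $\beta=3/2$ appearing in Theorem \ref{thm-bound-high}.
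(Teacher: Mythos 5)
Your overall architecture matches the paper's: a harmonic lift to obtain quantitative analyticity of $\phi$, a content-to-capacity transference combined with Mattila's capacity slicing to reduce to the $1$-dim propagation of smallness on lines, and the adapted Lebeau--Robbiano iteration of \cite{Duyckaerts2012resolvent} at the end. Two points, however, need repair.

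First, your lift $v(x,y)=\sum a_j\lambda_j^{-1/2}\sinh(\sqrt{\lambda_j}\,y)\phi_j(x)$ vanishes identically on $\Omega\times\{0\}$, so ``propagation of smallness for $v$ from $E\times\{0\}$ to $\Omega\times\{0\}$'' is vacuous as stated; what you actually observe on $E$ is $\partial_y v(\cdot,0)=\phi$, and propagating smallness of a \emph{gradient} of a harmonic function from a set of codimension at least one is precisely the Logunov--Malinnikova problem \eqref{equ-ob-intro-3} that the introduction flags as open near the critical exponent. The paper avoids this by taking the lift $\Phi(x,y)=\sum c_k e^{\sqrt{\lambda_k}y}\phi_k(x)$, for which $\Phi(\cdot,0)=\phi$ itself; the interior analyticity estimates for the harmonic $\Phi$ then give derivative bounds on $\phi$ that allow one to extend $\phi$ to a complex polydisc and run the slicing argument directly on $\phi$ (Proposition \ref{prop-spec-high}). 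Your slicing step should therefore be applied to $\phi$ (equivalently to $\partial_y v(\cdot,0)$ viewed as an analytic function of $x$), not to $v$.

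Second, the spectral inequality you display carries the factor $(\log(\lambda+e))^{-2\alpha/3}$, which is what the $1$-dim argument gives but is \emph{not} what survives the $d$-dim reduction: the transference between the content $c_{F_{\alpha,\beta}}$ and the capacity (Lemma \ref{lem-connect}) costs one unit (plus $\varepsilon$) in the $\log\log$-index and one unit in the $\log$-index, so the rich slices only have positive $c_{h_{\alpha-1-\varepsilon}}$-content and the resulting spectral inequality has exponent $-2(\alpha-1-)/3$ (Proposition \ref{prop-spec-high}). With your displayed inequality the Lebeau--Robbiano series would already converge for $\alpha>3/2$, contradicting your own (correct) conclusion that the threshold is $5/2$; the condition $\alpha>5/2$ comes from $2(\alpha-1-)/3>1$, i.e.\ the loss occurs in the spectral inequality itself, not at the telescoping stage. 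With these two corrections your outline coincides with the paper's proof.
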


\begin{remark}
Theorem \ref{thm-bound-high}  extends the scope of observation sets beyond those previously dicussed in \cite{walton2024observability}. In fact, any subset with a positive $(d-1+\delta)$-Hausdorff content has a positive $F_{\alpha,\beta}$-Hausdorff content for any $\alpha,\beta>0$.
  Conversely,  there exist  subsets  with   positive $F_{\alpha,\beta}$-Hausdorff content  for any $\alpha,\beta>0$, while the Hausdorff dimension is $d-1$, see Example \ref{rmk-high-d}.   We refer the reader to Remark \ref{rem-d-1} for more comments on the comparison between our results and those in Dolecki \cite{Dolecki1973} and Samb \cite{Samb2015}.
\end{remark}

\begin{remark}\label{remark 1.9w-9-22}
$(i)$ The assumption that $\Omega$ has a smooth boundary in Theorem \ref{thm-bound-high} can be relaxed. See Remark \ref{rem-1126} for more details. Moreover, Theorem \ref{thm-bound-high} also holds when $\alpha>\frac{5}{2}$ and $\beta\geq\frac{3}{2}$,  following directly from properties of the Hausdorff content.

$(ii)$
There are differences between conditions imposed in
Theorem \ref{thm-ob-bound} and Theorem \ref{thm-bound-high}. First, the values of $(\alpha,\beta)$ are different. Second,
 the observation set  $E$ in Theorem \ref{thm-bound-high} is assumed to  be compact, whereas  Theorem \ref{thm-ob-bound} does not impose such assumption.
 The reason  causing the above differences is  that our method creates a gap between the $1$-dim and the $d$-dim approaches (see $(ii)$ of Remark \ref{remark1.11-w-9-22} for details).

 \end{remark}

Next, we consider the following heat equation over $\R^d$ with $d\geq 2$:
\begin{align}\label{equ-heat-high-Rn}
\left\{
\begin{array}{l}
  \partial_tu(t,x)=\Delta u(t,x),\quad t>0,x\in \R^d, \\
  u(0,\cdot)\in L^2(\R^d).
\end{array}
\right.
\end{align}
Throughout this paper, we shall use $\{Q(k)\}_{k\in \Z^d}$ to denote a sequence of closed unit cubes, centered at $k\in \Z^d$. Then we have
$$
\R^d=\bigcup_{k\in \Z^d}Q(k)   \quad \mbox{ and } \quad |Q(k)\cap Q(j)|=0 \mbox{ for all } k\neq j.
$$

\begin{theorem}\label{thm-Rn-high}
Let  $\alpha>\frac{5}{2}$ and $\beta= \frac{3}{2}$.
Let $E\subset \R^d$ be a closed   $F_{\alpha,\beta}$-$(\gamma, L)$ thick set  for some $\gamma>0$ and $L>0$.
Then  for each $T>0$, there is $C_{obs}>0$, depending only on $T$, $d$, $\alpha$, $\gamma$ and $ L$, such that
any solution $u$ to \eqref{equ-heat-high-Rn} satisfies
$$
\|u(T,\cdot)\|_{L^2(\R^d)}\leq C_{obs}\int_0^T\Big\|\Big\{\sup_{x\in E\cap Q(k)}|u(t,x)|\Big\}_{k\in\Z^d}\Big\|_{l^2(\Z^d)}\d t.
$$
\end{theorem}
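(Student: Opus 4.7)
The plan is to mirror the five-step program outlined in Remark \ref{remark-strategy-thm5} for proving Theorem \ref{thm-ob-R}, now with the log-type gauge $F_{\alpha,\beta}$ (with $\beta=\frac{3}{2}$ and $\alpha>\frac{5}{2}$) playing the role of $h_\alpha$, and with the 1-dim slab $[k,k+L]$ replaced by the cube $Q(k)$. Concretely, I would first develop a $d$-dim quantitative propagation of smallness for analytic functions at the scale of $c_{F_{\alpha,\beta}}$, then use it to derive a $d$-dim Logvinenko-Sereda type uncertainty principle in the amalgam space $l^2L^\infty(\R^d)$: for $v\in l^2L^\infty$ with $\mathrm{supp}\,\widehat v\subset[-N,N]^d$ and $E$ closed and $F_{\alpha,\beta}$-$(\gamma,L)$ thick,
$$
\|v\|_{l^2L^\infty}\leq A(N,L,\gamma,\alpha)\Big\|\Big\{\sup_{E\cap Q(k)}|v|\Big\}_{k\in\Z^d}\Big\|_{l^2(\Z^d)},
$$
where $A(N,L,\gamma,\alpha)$ grows essentially like $\exp\{CN^2(\log N)^{-2\alpha/3}/\gamma^2\}$ up to polynomial-in-$N$ factors that depend on $L$.

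The main obstacle is precisely this lift from 1-dim to $d$-dim, and I would carry it out using the capacity-based slicing lemma together with the quantitative relationship between Hausdorff contents and capacities announced in the abstract. Heuristically, the factor $t^{d-1}$ built into $F_{\alpha,\beta}$ is exactly what is lost when one fibres a subset of $\R^d$ over a coordinate axis: a set with $c_{F_{\alpha,\beta}}(E\cap Q(k))\gtrsim \gamma$ must therefore possess a sufficiently rich family of $1$-dim fibres on which $c_{h_{\alpha'}}$ is quantitatively positive for some $\alpha'$ slightly smaller than $\alpha$. Applying the 1-dim analytic propagation of smallness (Step 3 of Remark \ref{remark-strategy-thm1}) fibre by fibre, and then reassembling via the slicing and capacity-to-content comparison, produces the $d$-dim propagation of smallness. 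The delicate point is that log-type gauges do not commute with slicing as cleanly as polynomial gauges do; carefully tracking the $\log$ and $\log\log$ losses incurred at the slicing step is what forces the $\log\log$-exponent to jump from $\alpha>\frac{3}{2}$ in 1-dim to $\alpha>\frac{5}{2}$ here. Feeding the $d$-dim propagation of smallness into a Bernstein-type complexification argument for band-limited functions in the amalgam space then yields the uncertainty principle displayed above.

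With the uncertainty principle in hand, the final step is the adapted Lebeau-Robbiano telescoping from \cite{Duyckaerts2012resolvent}, applied to the spectral cutoffs $P_\lambda$ onto Fourier frequencies in $[-\sqrt\lambda,\sqrt\lambda]^d$. The uncertainty principle supplies the low-frequency control
$$
\|P_\lambda v\|_{l^2L^\infty}\lesssim \exp\Big\{\tfrac{C\lambda(\log\lambda)^{-2\alpha/3}}{\gamma^2}\Big\}\Big\|\Big\{\sup_{E\cap Q(k)}|P_\lambda v|\Big\}_{k\in\Z^d}\Big\|_{l^2(\Z^d)},
$$
while the heat semigroup provides the high-frequency dissipation $\|(1-P_\lambda)e^{t\Delta}\|_{L^2\to L^2}\leq e^{-t\lambda}$. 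The condition $\alpha>\frac{5}{2}$ is precisely what makes $\lambda(\log\lambda)^{-2\alpha/3}$ fall into the critical summability window of \cite{Duyckaerts2012resolvent} after absorbing the polynomial-in-$N$ loss coming from the $d$-dim amalgam-space uncertainty principle; the 1-dim threshold $\alpha>\frac{3}{2}$ is increased by exactly the extra $\log$ loss of the slicing step. The telescoping then closes and yields the observability inequality with right-hand side in the $l^2L^\infty$ form stated in the theorem; closedness of $E$ is invoked to guarantee that $t\mapsto\sup_{E\cap Q(k)}|u(t,\cdot)|$ is measurable.
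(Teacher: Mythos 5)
Your proposal follows essentially the same route as the paper: a $d$-dim propagation of smallness obtained by combining the capacity-based slicing lemma with the quantitative Hausdorff-content/capacity transference, feeding this into an amalgam-space Logvinenko--Sereda principle for band-limited functions, and then closing via the adapted Lebeau--Robbiano telescoping, with $\alpha>\frac{5}{2}$ arising exactly as you say from the loss incurred in passing between content and capacity. One bookkeeping remark: the exponent in your displayed uncertainty principle should be $(\log N)^{-2(\alpha-1-)/3}$ rather than $(\log N)^{-2\alpha/3}$ (the transference costs $1+\varepsilon$ in the $\log\log$ exponent), which is precisely why the summability condition becomes $\frac{2(\alpha-1-)}{3}>1$, i.e.\ $\alpha>\frac{5}{2}$ — a point your prose already acknowledges.
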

By a duality argument, we can apply the observabilities in the above theorems to establish null-controllability for heat equations from a class of sets with Hausdorff dimension $d-1$. For further details, we refer to Section \ref{section-contr}.

\begin{remark}\label{remark1.11-w-9-22} (On the strategy to prove Theorem \ref{thm-bound-high} and Theorem \ref{thm-Rn-high})

$(i)$ The methods used in Theorems \ref{thm-bound-high} and \ref{thm-Rn-high}
parallel those for Theorems \ref{thm-ob-bound}
and \ref{thm-ob-R}, with a key difference at {\it Step 3}.
 The necessity for these modifications arises from the lack of a corresponding upper bound
 on level sets for monic polynomials in the  $d$-dim
 case ($d\geq 2$) in general.
 For instance, consider the polynomial $P(z)=(z_1z_2\ldots z_d)^n$,  $z_j\in\C$, $1\le j\le d$. For any  $\delta>0$, the $2d$-dim  Lebesgue measure of
 the level set $\{z\in \C^d: |P(z)|\leq \delta^n\}$ is infinite. This, in particular,  implies that $c_{F_{\alpha,\beta}}(P;\delta)=\infty$ for any $\alpha,\beta>0$, where $F_{\alpha,\beta}$ is given by \eqref{equ-intro-F}.

$(ii)$ We explain the modifications in  {\it Step 3} for higher dimensions as follows:

For the  $d$-dim  case ($d\ge 2$), we extend  the  $1$-dim inequality \eqref{equ-ob-intro-6} by seeking a suitable slicing theorem.
Unfortunately, we are not able to obtain such kind of  slicing theorem at scale of  log-type Hausdorff contents. Instead, we switch to use capacity-based slicing theorem from fractal geometry (see \cite{Mattila1995geometry}). A well-known such slicing theorem states that
if a set $E\subset\R^d$ has a
positive Riesz capacity $C_s(E)$ (where $s=d-1+\delta$, with $\delta\in(0,1]$),
 then there exists a line $l$ passing the origin, such that the set $a\in l^\bot$, with $C_\delta(E\cap(l+a))\geq \kappa$ for some $\kappa>0$, has a positive $(d-1)$-dim Lebesgue measure. For our purpose, we  derived a quantitative version in terms of the
 log-type capacity (see Lemma \ref{lem-slicing}).
 However, \eqref{equ-ob-intro-6} is at scale  of the Hausdorff content, while the above-mentioned slicing theorem is at scale  of the capacity. Thus,
 we  have to set up a quantitative  transference between the Hausdorff content and the capacity (i.e., Lemma \ref{lem-connect}).

  With the above-mentioned  slicing lemma and the
  quantitative transference lemma,
 we can use  \eqref{equ-ob-intro-6} to get a desired quantitative propagation of smallness for analytic functions from a set of positive  log-type Hausdorff content
  in the $d$-dim case (i.e., Proposition \ref{prop-ana-high-2}).

 It deserves mentioning that   Hausdorff contents and  capacities  are very different tools to measure sets. This distinction creates a gap (see Remark \ref{rem-gap}),  leading to slightly weaker results in Theorem \ref{thm-bound-high} and Theorem \ref{thm-Rn-high} compared to  Theorem \ref{thm-ob-bound} and Theorem \ref{thm-ob-R}.

 $(iii)$ Due to the aforementioned modifications, the
   spectral inequality/uncertainty principle in the  $d$-dim case is slightly different from its  $1$-dim counterpart
    (see  Proposition \ref{prop-ana-high-2} and Proposition \ref{prop-LS-high}).

$(iv)$ Two facts are worth mentioned. First, when $E$  has a positive Lebesgue measure, the extension
of the quantitative propagation of smallness from $1$-dim to $d$-dim follows easily from standard techniques involving the use of the polar
coordinates (see \cite{ApraizNullcontrol2013,Kovrijkine2001some,Wang2019observable}).  However, as mentioned in \cite{walton2024observability},
the use of the polar coordinates encounters difficulties, even for the case that $E$  has a positive $s$-Hausdorff measure (with $s<d$).
Second, based on the above-mentioned slicing theorem in fractal geometry, Malinnikova (see \cite{Malinnikova2004propaga}) established  a H\"older type unique continuation inequality for analytic function  from fractal Hausdorff dimension sets for the  $d$-dim (with $d\geq 2$) case. Inspired by  \cite{Malinnikova2004propaga}, we further refined this result by presenting  a propagation of smallness from log-type Hausdorff content sets.
\end{remark}



\begin{remark}\label{rem-new-spec}
As a byproduct, we build up the following spectral inequality: When $E\subset\R^d$
is a subset of positive $F_{\alpha,\beta}$-Hausdorff content (with $\alpha>1$, $\beta=3/2$),
\begin{align}\label{equ-ob-intro-5}
  \sup_{\Omega}|\phi|\leq Ce^{\frac{C\lambda}{(\log \lambda)^\tau}}\sup_E|\phi|,\;\;\;\mbox{for each}\;\; \phi\in \mathcal{E}_\lambda,
\end{align}
with $\tau=\frac{2(\alpha-1-)}{3}$. (Here and below, $a-$ denotes $a-\varepsilon$ for arbitrarily small $\varepsilon>0$.)
The spectral inequality \eqref{equ-ob-intro-5} is crucial to obtain our observability inequality and  has independent interest. Despite the similarity between \eqref{equ-ob-intro-5}
and  \eqref{equ-ob-intro-4}, they are essentially different from two perspectives: First, the tools  used to measure $E$  are different. The former uses  $F_{\alpha,\beta}$-Hausdorff content which is finer than  $g_\delta$-Hausdorff content used in the  latter.  Second, the factor  $e^{\frac{C\lambda}{(\log \lambda)^\tau}}$ in \eqref{equ-ob-intro-5} grows much more quickly  than the factor $e^{C\sqrt{\lambda}}$ in \eqref{equ-ob-intro-4}  as $\lambda\to \infty$.
This aligns with the intuition that as the size of the observation set decrease, the cost constant  increases.
Several key points regarding the aforementioned two factors deserve to be highlighted. Fact one, for open subsets $E\subset\R^d$, the growth order $e^{C\sqrt{\lambda}}$ is optimal and cannot be replaced by $e^{g(\lambda)}$, with  $g(\lambda)/\sqrt{\lambda}\to 0$ as $\lambda\to \infty$ (see \cite{RousseauLebeau2012}, and also see \cite{RousseauMoyano2016} for a similar result  on $\R^d$).
Fact two,  for subsets $E\subset\R^d$ with positive $d$-dim Lebesgue measure or  $(n-1+\delta)$-Hausdorff content,
the growth order remains to be   $e^{C\sqrt{\lambda}}$ (see  \cite{Apraiz2014obser} and \cite{walton2024observability} respectively).
Fact three,
to our best knowledge, except for \eqref{equ-ob-intro-5},
no other spectral inequality exists with a growth order differing from $e^{C\sqrt{\lambda}}$
for the Laplacian on  bounded domains\footnote{For the  Laplacian on the whole space $\R^d$,  the counterpart of  the spectral inequality (which is equivalent to the Logvinenko-Sereda  uncertainty principle) has the same growth rate $e^{C\sqrt{\lambda}}$. It is worth mentioning that for the Schr\"odinger operator $H=-\Delta+V(x)$ with power growth potentials $V$ on $\R^d$, uncertainty principles  with different growth rates  have been established. This is a very active research field, see, for example, \cite{JamingSpectralestimates2021,Dickeuncertainty2023JFAA,Dickespectral2024,wang2024quantitative,zhu2023spectral}. }.
\end{remark}

\section{Remez's inequality and propagation of smallness}\label{section 2}

The main purpose of this section is to  present a quantitative propagation of smallness of analytic functions from sets of positive
log-type Hausdorff contents in the $1$-dim case.

\subsection{Hausdorff contents and examples}\

We start with the following definitions:
\begin{definition}\label{def-1}
$(i)$ If  $f: (0,\infty)\rightarrow [0,\infty)$ is a
strictly increasing and continuous function satisfying
$\lim_{t\to 0^+}\limits f(t)=0$, then it is called a gauge function.

For a gauge function $f$, we use $f^{[-1]}$ to denote its inverse function.\footnote{In this paper, $f^{-1}$ will be used to denote the quantity $1/f$.}

$(ii)$ Let $f$ be a gauge function.  The $f$-Hausdorff content   $c_f$ is a map from
$2^{\mathbb{R}^d}$  to $[0,\infty]$, defined in the manner:
$$
c_f(E):=\inf\Big\{ \sum_{j=1}^\infty f(d(B_j)): E\subset \bigcup_{j=1}^\infty B_j \Big\},\;\;\;E\subset\mathbb{R}^n
$$
where $B_j$, $j=1,2, \dots$,  are open balls (in $\R^d$) with the diameter  $d(B_j)$.
While the $f$-Hausdorff measure $m_f$ is a map from
$2^{\mathbb{R}^d}$ to $[0,\infty]$, defined in the manner:
$$
m_f(E):=\lim_{\delta\to 0^+}\inf\Big\{ \sum_{j=1}^\infty  f(d(B_j)): E\subset \bigcup_{j=1}^\infty B_j, d(B_j)\leq \delta \Big\},\;\;\;
E\subset\mathbb{R}^d.
$$
\end{definition}

\begin{remark}\label{remark2.2-w-9-29}
$(i)$ Only the values of $f$ for small independent variable $t$
 are relevant to $c_f$ and $m_f$.
  Thus, one can define a gauge function $f$ on $(0,K)$ (or $(0,K]$) for some $K>0$, and it should be  interpreted as a gauge function  over $(0,\infty)$ such that $f(t)\sim 1$ as $t\to +\infty$. For instance, the functions
  $F_{\alpha,\beta}$ and
  $h_{\alpha}$ (given by \eqref{equ-intro-F} and
  \eqref{1.7w9-21} respectively) are precisely this type of functions.

 $(ii)$ By Definition \ref{def-1}, for each $E\subset \R^d$,  we have $m_f(E)\geq c_f(E)$
 and  $m_f(E)>0$ if and only if $c_f(E)>0$.

 $(iii) $ For each $\delta>0$, we define the gauge function:
 \begin{align}\label{1.8w-9-21}
 g_\delta(t):=t^\delta,\;t>0.
 \end{align}
 The $g_{\delta}$-Hausdorff measure $m_{g_{\delta}}$ is also called $\delta$-Hausdorff measure and denoted by $\mathcal{H}^\delta$ sometimes. Similarly,  the   $g_{\delta}$-Hausdorff content $c_{g_{\delta}}$ is also called $\delta$-Hausdorff content.
 For further details on the theory of $f$-Hausdorff measure/content, we refer  to \cite{Rogers1998hausdorff, LinYang}.
    \end{remark}


\begin{example}\label{rmk-cantor}
We provide two concrete subsets $E$ in $[0,1]$ such that $c_{h_\alpha}(E)>0$ for each  $\alpha>0$,
  while  $c_{g_\delta}(E)=0$ for  all $\delta\in(0,1]$,
where $h_{\alpha}$ and $g_\delta$ are given by \eqref{1.7w9-21} and \eqref{1.8w-9-21} respectively. These examples show that
Theorem \ref{equ-heat-b} improves the result in  \cite{walton2024observability}.
\vskip 5pt

(i)  Let $\mathcal{L}$ be the set of  all Liouville  numbers in $[0, 1]$. In other words,
\begin{equation}\label{eq-liouv}
  \mathcal{L} := \left\{ x \in [0, 1] \setminus \mathbb{Q}\; :\;  \forall\; n \in \mathbb{N}, \exists\; \frac{p}{q}\in\mathbb{Q} \text{ with } q > 1 \text{ such that } \left| x - \frac{p}{q} \right| < \frac{1}{q^n} \right\}.
\end{equation}
It is well known that the  Hausdorff dimension of $\mathcal{L}$ is zero, thus $c_{g_\delta}(\mathcal{L})=0$  for all $\delta\in(0,1]$. On the other hand, it follows from \cite[Theorem 2]{Olsen2005} that for each $\alpha>0$,
 $m_{h_\alpha}(\mathcal{L})=\infty$. Hence, we have $c_{h_\alpha}(\mathcal{L})>0$. (See  Remark \ref{remark2.2-w-9-29} $(ii)$.)
 This, along with the fact that   $\mathcal{L}$ is compact, yields that
  $0<c_{h_\alpha}(\mathcal{L})<\infty$.

\vskip 5pt
(ii) Let $h_{\alpha}$ be the gauge function given by \eqref{1.7w9-21}. To simplify computations, we first consider the case  $\alpha=0$, namely
\begin{align}\label{3.2-w-10-19}
h_0(t)=(\log \frac{1}{t})^{-\frac{1}{2}}, \;\; 0<t\leq e^{-3}.
\end{align}

Let  $\{c_k\}_{k\ge 1}$ be a sequence of real numbers satisfying
\begin{equation}\label{cant-01}
	h_{0}(c_k)=2^{-k},\,\,\,\,k=1,2,\cdots.
\end{equation}
Then by \eqref{3.2-w-10-19}, we have
\begin{equation}\label{cant-02}
	c_k=e^{-2^{2k}};\;\;\; 2c_{k+1}<c_k\;\;\mbox{for each}\;\;k\geq 1.
\end{equation}

{We now construct a generalized Cantor set in the following manner:

Let $I_{0,1} := [0,1]$. At the first step, we remove the interval $(c_1, 1-c_1)$ from $I_{0,1}$, resulting in two intervals: $I_{1,1} = [0, c_1]$ and $I_{1,2} = [1-c_1, 1]$. At the second step, we remove an open interval from the center of each of $I_{1,1}$ and $I_{1,2}$, yielding four closed intervals of equal length $c_2$, denoted as $I_{2,1}, I_{2,2}, I_{2,3}, I_{2,4}$.
This process continues iteratively. At the $k$-th step, we select two subintervals, each of length $c_k$, from the intervals obtained at the $(k-1)$-th step by removing the middle open interval. The intervals obtained at each step are illustrated in the following figure:}

\begin{center}

	\begin{tikzpicture}[decoration=Cantor set,line width=3mm]
		\draw (0,0) -- (3,0) node[right]{$I_{0,1}$};
		\draw decorate{ (0,-.7) coordinate (a_1) -- (3,-.7) } node[right]{$I_{1,1}\cup I_{1,2}$};
		\draw decorate{ decorate{ (0,-1.4) -- (3,-1.4) }}node[right]{$I_{2,1}\cup I_{2,2}\cup I_{2,3}\cup I_{2,4}$};;
		\draw decorate{ decorate{ decorate{ (0,-1.9) -- (3,-1.9) }}};
		%
	\end{tikzpicture}
\end{center}
One can easily see that for each $k\geq 1$, we have that $|I_{k,j}|=c_k$ for all $1\le j\le 2^k$.
Let
\begin{equation}\label{exam-1d}
\mathcal{C}=\bigcap_{k=0}^{\infty}\bigcup_{j=1}^{2^k}I_{k, j}.
\end{equation}
{By \eqref{cant-02}}, we have
$
{dim_{\mathcal{H}}}(\mathcal{C})=\varliminf_{n\to\infty}\limits\frac{\log{2^n}}{-\log{c_n}}=\varliminf_{n\to\infty}\limits\frac{n}{2^{2n}}=0,
$
see e.g. \cite{QuRaosu},
which implies that $c_{g_\delta}(\mathcal{C})=0$ for each $\delta>0$.

On the other hand, since for every $k=1,2,\ldots,$ $\mathcal{C}\subset\bigcup_{j=1}^{2^k}I_{k,j}$, it follows {from \eqref{cant-01}} that
$c_{h_{0}}(\mathcal{C})\le \sum_{j=1}^{2^k}h_0(|I_{k,j}|)= 1$. Furthermore, we can use the way in \cite[p.63]{Mattila1995geometry}
to  verify that
 $\frac14\le m_{h_0}(\mathcal{C})\le1$.

Finally, we can use the same method to handle the situation where $\alpha>0$,
by adjusting the relationships in \eqref{cant-02} accordingly.
\end{example}

\begin{remark}\label{rmk-high-d}

Based on the above examples, we can construct subsets $E\subset \R^d$ such that the Hausdorff dimension of $E$ is $d-1$, while  $c_{F_{\alpha,
\beta}}(E)>0$. Indeed, given
 $\alpha\ge 0$ and $\beta>0$, consider the Cartesian product
$$
E=[0,1]^{d-1}\times \mathcal{C}_{\alpha, \beta},
$$
where $\mathcal{C}_{\alpha, \beta}$ is defined in the same way as \eqref{exam-1d}, except that  $|I_{k,j}|=\tilde{c}_k$, with $\tilde{c}_k$  satisfying
$$
f_{\alpha, \beta}(\tilde{c}_k)=2^{-k},\,\,\,\,k=1,2,\cdots,
$$
where $f_{\alpha, \beta}(t)=(\log \frac{1}{t})^{-\beta}(\log\log\frac{1}{t})^{-\alpha}$.
From which we deduce that $$\exp\{-2^{\frac{k}{\beta}}\}\le\tilde{c}_k\le\exp\{-2^{\frac{k}{2\beta}}\}$$
holds when $k$ is large enough. 
Then  repeat the arguments in (ii) of Example \ref{rmk-cantor}, we have ${dim_H}(\mathcal{C}_{\alpha, \beta})=0$ and $c_{f_{\alpha, \beta}}(\mathcal{C}_{\alpha, \beta})>0$. In summary, we conclude that ${dim_H}(E)=d-1$, while $c_{F_{\alpha, \beta}}(E)>0$.

\end{remark}

\begin{remark}\label{rem-d-1}
The observability inequalities for the equation  \eqref{equ-heat-high-bd} with observation sets of Hausdorff dimension $d-1$
have been studied in   Dolecki \cite{Dolecki1973} and Samb \cite{Samb2015}, based on
the results in the number theory. We summarize their main results  as follows:

In the $1$-dim case where $\Omega=(0,1)$, if
$$
x_0\in\mathcal{A}:=\{x\in (0,1)\;:\; x\;\;\mbox{is an algebraic numbers of order}\; k>1\},
$$
then  for each $T>0$, any solution $u$ to  the equation  \eqref{equ-heat-high-bd} satisfies
\begin{align}\label{equ-ob-d-1}
 \|u(0,\cdot)\|^2_{L^2(0,1)}\leq C_{obs}\int_0^T|u(t,x_0)|^2\d t.
\end{align}

In the case that $d\geq 2$ where $\Omega=(0,1)\times \Omega'$, with $\Omega'$  a bounded domain in $\R^{d-1}$, if  $\omega\subset \Omega'$ is an open subset and  $x_0\in \mathcal{A}$, Then, for each $T>0$, any solution $u$ to  the equation  \eqref{equ-heat-high-bd} satisfies
\begin{align}\label{equ-ob-d-2}
 \|u(0,\cdot)\|^2_{L^2(\Omega)}\leq C_{obs}\int_0^T\int_\omega|u(t,x_0,x')|^2\d x'\d t.
\end{align}
Note that \eqref{equ-ob-d-2} remains true even for  subsets $\omega$ with positive $(d-1+\delta)$-Hausdorff content
for $\delta>0$, as pointed out by Green et.al. \cite{walton2024observability}.

Next, we explain the difference between  the above results and our own as follows: For the $1$-dim case, \eqref{equ-ob-d-1} is exactly the observability inequality \eqref{thm-ob-1024h-1} with $L=1$ and $E=\{x_0\}$.
Thus, for each $x_0\in \mathcal{A}$, $\{x_0\}$ is an observable set.
However, these observable sets are not characterized by any Hausdorff content or any other measuring tool.
Compared with the above result, the advantage of Theorem \ref{thm-ob-bound}
lies in providing  a scale for measuring the observable sets which include a class of sets with Hausdorff dim $d-1$, rather than focusing on how to pick some particular  observable sets of Hausdorff dim $d-1$.

The same can be said about \eqref{equ-ob-d-2} and our Theorem \ref{thm-bound-high}.

\end{remark}

\subsection{Remez' inequality at scale of $c_{h_\alpha}$}\

This subsection aims to present an extension of   Remez's inequality at scale of  $c_{h_\alpha}$ (i.e., Lemma \ref{lem-remez}). A key ingredient
is   an estimate on the ${h_\alpha}$-content of a set where a monic polynomial takes small values (i.e., Lemma \ref{lem-ch}).
We point out that  the essence of the proof relies on the following
 classical Cartan lemma \cite{Cartan28} (see also in \cite[pp.202-204]{Lubinsky1997small}).

\begin{lemma}\label{lem-cartan}
Let $0<r_1<r_2<\cdots<r_n$. Let $P(z)$ (with $z\in \mathbb{C}$) be a monic polynomial of degree $n$. Then there exist positive integers $p\leq n$, $\lambda_1, \lambda_2,\dots, \lambda_p$ and closed balls
 $B_1,B_2,\dots, B_p$ (in $\mathbb{C}$), such that
\begin{itemize}
  \item [(i)] $\lambda_1+\lambda_2+\cdots+\lambda_p=n$.
  \item [(ii)]$d(B_j)=4r_{\lambda_j}$ for all $1\leq j\leq p$.
  \item [(iii)]$$
  \left\{  z\in \mathbb{C} \;:\; |P(z)|\leq \prod_{j=1}^n r_j \right\}\subset \bigcup_{j=1}^p B_j.
  $$
\end{itemize}
\end{lemma}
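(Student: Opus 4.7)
The plan is the classical greedy clustering of the roots of $P$, applied with the weights $r_1 < \cdots < r_n$. Writing $P(z) = \prod_{k=1}^{n}(z-z_k)$ with roots counted with multiplicity, I would construct the balls $B_j$ and indices $\lambda_j$ inductively along a decreasing sequence of multisets $R_0 = \{z_1, \ldots, z_n\} \supset R_1 \supset \cdots \supset R_p = \emptyset$. At stage $j$, let $\lambda_j$ be the largest integer $k \le |R_{j-1}|$ for which some closed disk of radius $r_k$ contains at least $k$ points of $R_{j-1}$ (such $k$ exists since $k=1$ is always admissible). Fix such a disk $D_j$ of radius $r_{\lambda_j}$, let $B_j$ be the concentric closed disk of radius $2r_{\lambda_j}$, so that $d(B_j) = 4 r_{\lambda_j}$, and set $R_j = R_{j-1} \setminus D_j$.

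A short maximality argument shows $|D_j \cap R_{j-1}| = \lambda_j$ exactly: if it were some $\mu_j > \lambda_j$, then the same $D_j$, viewed as a disk of radius $r_{\lambda_j} \le r_{\mu_j}$ centered at its own center, already contains $\mu_j$ points, so enlarging its radius to $r_{\mu_j}$ yields a disk of radius $r_{\mu_j}$ containing $\ge \mu_j$ points, contradicting the choice of $\lambda_j$. Consequently $\sum_{j=1}^p \lambda_j = n$ and $p\le n$, giving (i) and (ii) immediately.

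The heart of the argument is the covering property (iii), which I would prove by contrapositive: if $z \notin \bigcup_{j=1}^p B_j$ then $|P(z)| > \prod_{k=1}^{n} r_k$. The crucial intermediate claim is that the closed disk $\bar D(z, r_k)$ contains strictly fewer than $k$ roots of $P$ for every $k \in \{1, \ldots, n\}$. Granted this, order the roots by distance $|z - z_{\sigma(1)}| \le \cdots \le |z - z_{\sigma(n)}|$; each $z_{\sigma(k)}$ must lie outside $\bar D(z, r_k)$, hence $|z - z_{\sigma(k)}| > r_k$, and multiplying yields $|P(z)| > \prod_{k=1}^n r_k$ as desired.

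To establish the claim, suppose some $\bar D(z, r_k)$ contains at least $k$ roots and let $j^*$ be the first stage at which one of them, call it $w$, enters $D_{j^*}$. Before stage $j^*$ none of the $k$ roots have been absorbed, so all $k$ of them still lie in $R_{j^*-1}$; the disk $\bar D(z, r_k)$ is therefore a radius-$r_k$ disk containing $\ge k$ points of $R_{j^*-1}$, and maximality forces $\lambda_{j^*} \ge k$. Since $w \in D_{j^*} \cap \bar D(z, r_k)$, the triangle inequality gives
\[
|z - c_{j^*}| \;\le\; |z - w| + |w - c_{j^*}| \;\le\; r_k + r_{\lambda_{j^*}} \;\le\; 2 r_{\lambda_{j^*}},
\]
where $c_{j^*}$ is the common center of $D_{j^*}$ and $B_{j^*}$, placing $z$ in the closed ball $B_{j^*}$ and contradicting the hypothesis. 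The only delicate point I anticipate is the bookkeeping of open versus closed disks: one must consistently use closed disks throughout so that the final inequality $r_k + r_{\lambda_{j^*}} \le 2r_{\lambda_{j^*}}$, which crucially uses the monotonicity $r_k \le r_{\lambda_{j^*}}$ built into the hypothesis $r_1 < \cdots < r_n$, really does force $z \in B_{j^*}$ and yields a strict inequality $|z - z_{\sigma(k)}| > r_k$ in the conclusion. Past this bookkeeping the argument is a routine greedy-algorithm verification.
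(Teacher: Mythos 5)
Your argument is correct: the greedy selection of the largest admissible $\lambda_j$, the exact count $|D_j\cap R_{j-1}|=\lambda_j$ forced by maximality, and the contrapositive covering step (each $\bar D(z,r_k)$ holds fewer than $k$ roots, so the $k$-th closest root is at distance $>r_k$) together give a complete proof. Note that the paper does not prove this lemma at all --- it is quoted as the classical Cartan lemma with references to \cite{Cartan28} and \cite[pp.~202--204]{Lubinsky1997small} --- and your write-up is essentially the standard argument found in those sources, so there is nothing to reconcile.
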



 Let $P$ be a monic polynomial of degree $n$. For each $\varepsilon>0$, we define the lemniscate:
\begin{align}\label{2.1-w-9-29}
E(P;\varepsilon)=\{z\in \mathbb{C}\;:\;|P(z)|\leq \varepsilon^n\}.
\end{align}
We simply write
\begin{align}\label{2.2-w-9-29}
c_f(P;\varepsilon):=c_f(E(P;\varepsilon)).
\end{align}

{
As an application of
the Cartan Lemma, Lubinsky \cite{Lubinsky1997small} derived the following upper bound on the Hausdorff content of \eqref{2.1-w-9-29}. Since only a sketch of the proof is given in \cite{Lubinsky1997small},
for the reader's convenicence, we provide its proof in detail here.
}

\begin{lemma}\label{lem-abs}
Let $f$ be a gauge function and let   $P$ be a monic polynomial of degree $n$. Then for each  $H\in (0,1]$,
\begin{align}\label{2.3-w-9-29}
c_f(P; ({g_n(H)}/{4})^n)
\leq f(H),
\end{align}
where
\begin{align}\label{2.4-w-9-29}
({g_n(H)}/{4})^n:=
4^{-n}\exp\left\{n\Big( \log H -\frac{1}{f(H)}\int^H_{f^{[-1]}(f(H)/n)} \frac{f(s)}{s}\d s\Big) \right\}.
\end{align}
\end{lemma}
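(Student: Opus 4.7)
The plan is to apply Cartan's Lemma (Lemma~\ref{lem-cartan}) with a cleverly chosen increasing sequence $0<r_1<r_2<\cdots<r_n$ and then use the balls it returns to cover the level set. I would set
$$4r_k=f^{[-1]}\Big(\tfrac{k}{n}f(H)\Big),\qquad k=1,2,\ldots,n,$$
so that by construction $f(4r_k)=(k/n)f(H)$; strict monotonicity of $f$ ensures $r_1<r_2<\cdots<r_n$ and $r_n=H/4$. This choice is engineered so that, no matter which decomposition $\lambda_1+\cdots+\lambda_p=n$ Cartan's Lemma returns, the $f$-mass of the resulting ball diameters is identical.

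Indeed, Lemma~\ref{lem-cartan} yields balls $B_1,\ldots,B_p$ with $d(B_j)=4r_{\lambda_j}$ covering the set $\{z\in\C:|P(z)|\le\prod_{k=1}^n r_k\}$, and then the subadditivity defining $c_f$, together with $\sum_{j=1}^p\lambda_j=n$, gives
$$c_f\Big(\Big\{|P|\le\prod_{k=1}^n r_k\Big\}\Big)\le\sum_{j=1}^p f(4r_{\lambda_j})=\sum_{j=1}^p\frac{\lambda_j}{n}f(H)=f(H).$$
It therefore remains only to show $\prod_{k=1}^n r_k\ge(g_n(H)/4)^n$, for this embeds the smaller level set $\{|P|\le(g_n(H)/4)^n\}$ into the larger one and transfers the bound.

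To obtain that lower bound, I would estimate $\sum_{k=1}^n\log f^{[-1]}(kf(H)/n)$ from below. Since $\log f^{[-1]}$ is increasing, each term with $k\ge 2$ dominates the left-Riemann rectangle over $[(k-1)f(H)/n,\,kf(H)/n]$, so
$$\sum_{k=2}^n\log f^{[-1]}\Big(\tfrac{k}{n}f(H)\Big)\ge\frac{n}{f(H)}\int_{f(H)/n}^{f(H)}\log f^{[-1]}(y)\,\d y.$$
The substitution $y=f(s)$ followed by one integration by parts rewrites this integral as $\log(H)f(H)-\log f^{[-1]}(f(H)/n)\cdot f(H)/n-\int_{f^{[-1]}(f(H)/n)}^H f(s)/s\,\d s$. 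The boundary term then cancels exactly against the retained $k=1$ contribution $\log f^{[-1]}(f(H)/n)$, giving $\sum_{k=1}^n\log f^{[-1]}(kf(H)/n)\ge n\log H-\frac{n}{f(H)}\int_{f^{[-1]}(f(H)/n)}^H f(s)/s\,\d s$, which is exactly what \eqref{2.4-w-9-29} demands after exponentiating and dividing by $4^n$.

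I expect the main delicate point to be this Riemann-sum-versus-integral step combined with the ensuing integration by parts: the direction of the inequality hinges on monotonicity of $\log f^{[-1]}$, and the lower limit $f^{[-1]}(f(H)/n)$ (rather than $0$) is precisely what prevents the possibly divergent integral $\int_0^H f(s)/s\,\d s$ from appearing. Once this estimate is in hand, everything else is a direct application of Cartan's Lemma and the definition of the $f$-Hausdorff content.
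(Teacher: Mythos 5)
Your proposal is correct and follows essentially the same route as the paper: the same choice $r_k=\tfrac14 f^{[-1]}(\tfrac{k}{n}f(H))$, the same application of Cartan's Lemma to get $\sum_j f(d(B_j))=f(H)$, and the same lower bound on $\prod_k r_k$ via a monotonicity-based comparison of the sum $\sum_{k\ge 2}\log f^{[-1]}(kf(H)/n)$ with an integral, followed by the substitution $y=f(s)$ and integration by parts, with the $k=1$ term absorbing the boundary contribution. No substantive differences.
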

\begin{proof}
Let
\begin{align}\label{2.5-w-9-29}
r_j=\frac{1}{4}f^{[-1]}\Big(\frac{j}{n}f(H)\Big), \quad 1\leq j\leq n.
\end{align}
Let  $\{\lambda_j\}_{j=1}^p$ and $\{B_j\}_{j=1}^p$
be given by
 Lemma \ref{lem-cartan}, corresponding to the above $\{r_j\}_{j=1}^n$.
 Then, by $(i)$ and $(ii)$ of  Lemma \ref{lem-cartan} and \eqref{2.5-w-9-29}, we have
 \begin{align}\label{2.5+-w-9-29}
\sum_{1\leq j\leq p} f(d(B_j))=\sum_{1\leq j\leq p}f(4r_{\lambda_j})
=f(H)\sum_{1\leq j\leq p}\frac{\lambda_j}{n}=f(H).
\end{align}
Meanwhile, by \eqref{2.5-w-9-29} and the monotonicity of both $f$ and $f^{[-1]}$, we see
\begin{align}\label{2.6-w-9-29}
\prod_{j=2}^n r_j=\prod_{j=2}^n\frac{1}{4}f^{[-1]}(\frac{j}{n}f(H))\geq 4^{-(n-1)}\exp(I),
\end{align}
where
\begin{align}\label{2.7-w-9-29}
I:=\int_1^n \log f^{[-1]}(\frac{s}{n}f(H))\d s.
\end{align}
Changing variable $\frac{s}{n}f(H)=f(t)$ and then using integration by parts in \eqref{2.7-w-9-29}, we obtain
\begin{align*}
    I&=\frac{n}{f(H)}\int^H_{f^{[-1]}(f(H)/n)}(\log t)f'(t)\d t\\
    &=\frac{n}{f(H)} \Big((\log H)f(H)-\big(\log f^{[-1]}(f(H)/n)\big)\frac{f(H)}{n}\Big)\\
    &\quad -\frac{n}{f(H)}\int^H_{f^{[-1]}(f(H)/n)}\frac{f(t)}{t}\d t\\
    &=n\log H-\log f^{[-1]}(f(H)/n)-\frac{n}{f(H)}\int^H_{f^{[-1]}(f(H)/n)}\frac{f(t)}{t}\d t.
\end{align*}
Since $r_1=\frac{1}{4}f^{[-1]}(\frac{1}{n}f(H))$, the above, along with \eqref{2.6-w-9-29}, \eqref{2.4-w-9-29}, leads to
$$
\prod_{j=1}^nr_j\geq 4^{-n}\exp\Big(n\Big( \log H -\frac{1}{f(H)}\int^H_{f^{[-1]}(f(H)/n)} \frac{f(s)}{s}\d s\Big) \Big)=({g_n(H)}/{4})^n.
$$
From the above, $(iii)$ of  Lemma \ref{lem-cartan}, \eqref{2.5+-w-9-29}
and the definition of $c_f$, we obtain  \eqref{2.3-w-9-29}. This finishes the proof.
\end{proof}

 In the rest of this section, we will focus on  $c_{h_\alpha}$, with $h_\alpha$ given by \eqref{1.7w9-21}.

\begin{lemma}\label{lem-ch}
Let $\alpha>0$.  {Then there exists a constant $C>0$, depending only on $\alpha$, such that for each $\delta\in(0,1)$ and each monic polynomial $P$ of degree $n$},
\begin{align}\label{equ-47-1}
  c_{h_\alpha}(P;\delta)\leq C (\log \frac{1}{4\delta})^{-\frac{1}{2}}  n^{\frac12} (\log (n+e))^{-\alpha/3},
\end{align}
where $ c_{h_\alpha}(P;\delta)$ is given by \eqref{2.2-w-9-29}.
\end{lemma}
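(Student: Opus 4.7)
The plan is to apply Lemma \ref{lem-abs} with $f = h_\alpha$ and choose the free parameter $H \in (0, e^{-3}]$ so that $g_n(H) = 4\delta$. Under this choice, $\{z : |P(z)| \leq \delta^n\} \subset \{z : |P(z)| \leq (g_n(H)/4)^n\}$, and Lemma \ref{lem-abs} yields immediately
\[
c_{h_\alpha}(P;\delta) \leq c_{h_\alpha}(P; g_n(H)/4) \leq h_\alpha(H),
\]
so the entire task reduces to bounding $h_\alpha(H)$ from above by the right-hand side of \eqref{equ-47-1}.

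To analyse the transcendental equation $g_n(H) = 4\delta$, I would substitute $y = \log(1/s)$ in the integral appearing in \eqref{2.4-w-9-29}. Setting $y_1 = \log(1/H)$ and $y_2 = \log(1/H_0)$, with $H_0 = h_\alpha^{[-1]}(h_\alpha(H)/n)$, that integral transforms into $\int_{y_1}^{y_2} y^{-1/2}(\log y)^{-\alpha}\,dy$. Integration by parts (against the antiderivative $2y^{1/2}$ of $y^{-1/2}$) produces the principal part $2y_2^{1/2}(\log y_2)^{-\alpha} - 2y_1^{1/2}(\log y_1)^{-\alpha}$ together with a tail $2\alpha\int y^{-1/2}(\log y)^{-\alpha-1}\,dy$ of strictly smaller order. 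The very definition of $H_0$ gives the key identity $y_2^{1/2}(\log y_2)^\alpha = n\, y_1^{1/2}(\log y_1)^\alpha$. Dividing $g_n(H) = 4\delta$ through by $h_\alpha(H) = y_1^{-1/2}(\log y_1)^{-\alpha}$ and using this identity, the main equation collapses, for $n$ large, to the leading-order balance
\[
\log\tfrac{1}{4\delta} \;\asymp\; 2n\, y_1 \bigl(\log y_1/\log y_2\bigr)^{2\alpha}.
\]

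Solving for $y_1$ and substituting back into $h_\alpha(H)$, a clean cancellation then yields
\[
h_\alpha(H) \;\lesssim\; \bigl(n/\log(1/(4\delta))\bigr)^{1/2} (\log y_2)^{-\alpha}.
\]
The last step is to extract a lower bound $\log y_2 \gtrsim (\log(n+e))^{1/3}$, which I expect to be the most delicate part of the argument. The strategy is to combine the identity $y_2^{1/2}(\log y_2)^\alpha = n\, y_1^{1/2}(\log y_1)^\alpha$ with the forced constraint $y_1 \geq 3$ (from $H \leq e^{-3}$) to obtain $y_2^{1/2}(\log y_2)^\alpha \gtrsim_\alpha n$, whence a short bootstrap actually produces $\log y_2 \gtrsim_\alpha \log(n+e)$. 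The gap between this $\log(n+e)$ and the weaker $(\log(n+e))^{1/3}$ demanded by the statement is precisely the slack needed to absorb both the lower-order integration-by-parts terms and the asymptotic approximations above under a single constant $C = C(\alpha)$. The main obstacle is therefore not the underlying analysis but the careful bookkeeping required to turn the asymptotic relations into genuine inequalities with explicit constants.
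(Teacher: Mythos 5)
Your overall architecture coincides with the paper's: both arguments reduce everything to Lemma \ref{lem-abs} with $f=h_\alpha$ and then analyze the implicit relation among $H$, $\delta$ and $n$ in the variables $y_1=\log\frac{1}{H}$ and $y_2=\log\frac{1}{t}$ with $t=h_\alpha^{[-1]}(h_\alpha(H)/n)$. The endgame, however, is genuinely different. The paper does not integrate by parts: it bounds $h_\alpha(s)\leq(\log\frac1s)^{-1/2}(\log y_1)^{-\alpha}$ on the whole interval, evaluates the remaining elementary integral, and lands on $\log\frac{1}{4\delta}\leq y_1(2nA^\alpha-1)$ with $A=\log y_1/\log y_2$ --- note the exponent $\alpha$ rather than your $2\alpha$, because the crude pointwise bound charges the integral at $(\log y_1)^{-\alpha}$ instead of $(\log y_2)^{-\alpha}$. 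It then splits into two cases according to whether $\log y_1$ exceeds $(\log(n+e))^\beta$ and balances them at $\beta=\frac13$; that balancing is exactly where $-\alpha/3$ comes from. Your sharper evaluation keeps the exponent $2\alpha$, your cancellation $h_\alpha(H)=y_1^{-1/2}(\log y_1)^{-\alpha}\lesssim_\alpha (n/\log\frac{1}{4\delta})^{1/2}(\log y_1/\log y_2)^{\alpha}(\log y_1)^{-\alpha}=(n/\log\frac{1}{4\delta})^{1/2}(\log y_2)^{-\alpha}$ is correct, and $\log y_2\gtrsim_\alpha\log(n+e)$ does follow from the identity $y_2^{1/2}(\log y_2)^\alpha=n\,y_1^{1/2}(\log y_1)^\alpha$ together with $y_1\geq3$. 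So your route, once completed, actually delivers the exponent $-\alpha$, strictly stronger than the stated $-\alpha/3$; the paper's route buys simpler bookkeeping at the price of a weaker exponent.

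Two points need repair. First, the tail $2\alpha\int_{y_1}^{y_2}y^{-1/2}(\log y)^{-\alpha-1}\,\d y$ is \emph{not} of strictly smaller order than the boundary terms uniformly: the only pointwise comparison available is $(\log y)^{-\alpha-1}\leq(\log y_1)^{-1}(\log y)^{-\alpha}$, which for $y_1$ near its floor $3$ and $2\alpha\geq\log 3$ gives an absorption factor $\geq1$ and proves nothing; moreover, when $y_2<ny_1$ the ``principal part'' $2y_1\bigl(n(\log y_1/\log y_2)^{2\alpha}-1\bigr)$ is negative and the tail dominates outright. What you actually need, and what is true, is $\int_{y_1}^{y_2}y^{-1/2}(\log y)^{-\alpha}\,\d y\leq C_\alpha\,y_2^{1/2}(\log y_2)^{-\alpha}$ for all $y_2\geq y_1\geq3$: the integrand is comparable to $\frac{\d}{\d y}\bigl(2y^{1/2}(\log y)^{-\alpha}\bigr)$ once $\log y\geq 4\alpha$, and the region $y\leq e^{4\alpha}$ contributes only an $\alpha$-dependent constant; the exceptional regime $n(\log y_1/\log y_2)^{2\alpha}\leq 1$ forces $\log y_1\gtrsim_\alpha n^{1/(2\alpha)}$ and is then trivial. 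Note also that this costs an $\alpha$-dependent \emph{constant}, which is fine, but it is not absorbed by the slack between $\log(n+e)$ and $(\log(n+e))^{1/3}$ in the sense you describe --- that slack is a factor growing in $n$, and conflating the two obscures what actually needs to be checked. Second, you never verify that $H\mapsto g_n(H)$ is increasing with range covering the relevant values of $4\delta$; the paper sidesteps this by parametrizing by $H$ and defining $\delta$ from it, and your version should either do the same or prove the surjectivity. Both issues are fixable.
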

\begin{proof}
We arbitrarily fix $\alpha>0$ and simply write $h$ for $h_\alpha$. Since the proof relies on Lemma \ref{lem-abs}, we also arbitrarily fix
$H\in(0,e^{-3}]$.
The proof is organized by three steps.


\vskip 5pt
\noindent {\it Step 1. We calculate $h^{[-1]}(h(H)/n)$.}
\vskip 5pt
Let
 \begin{align}\label{2.10-w-g-2-29}
 t:=h^{[-1]}(h(H)/n).
 \end{align}
  Then $h(t)=h(H)/n$. Thus,  by the definition of $h$ (see \eqref{1.7w9-21}), we have
$$
(\log \frac{1}{t})^{-\frac{1}{2}}(\log\log\frac{1}{t})^{-\alpha}
=\frac{1}{n}(\log \frac{1}{H})^{-\frac{1}{2}}(\log\log\frac{1}{H})^{-\alpha},
$$
from which, it follows that
\begin{align}\label{equ-ch-1}
 \log\frac{1}{t}=n^2\left(
 \frac{\log\log \frac{1}{H}}{\log\log\frac{1}{t}}\right)^{2\alpha}  \log \frac{1}{H}.
\end{align}
Based on \eqref{equ-ch-1}, we use  iteration to obtain that
\begin{align}\label{equ-ch-2}
 \log\frac{1}{t}&=n^2\left(
 \frac{\log\log \frac{1}{H}}{\log\Big(n^2\left(
 \frac{\log\log \frac{1}{H}}{\log\log\frac{1}{t}}\right)^{2\alpha}  \log \frac{1}{H}\Big)}\right)^{2\alpha}  \log \frac{1}{H}\nonumber\\
 &=n^2\left(
 \frac{\log\log \frac{1}{H}}{\log n^2 +\log\log\frac{1}{H}+{\Xi}}\right)^{2\alpha}  \log \frac{1}{H},
\end{align}
{
where $\Xi=\log\left(
 \frac{\log\log \frac{1}{H}}{\log\log\frac{1}{t}}\right)^{2\alpha}$. By the monotonicity of $h$ and the fact $h(t)=h(H)/n$, we have $t\leq H\leq e^{-3}$. Thus $3\leq\log{\frac{1}{H}}\leq \log{\frac{1}{t}}$. Note that $\varphi(t)=t^{\frac1t}$ ($t>0$) is decreasing when $t>e$, thus we have
 $(\log{\frac{1}{H}})^{(\log{\frac{1}{H}})^{-1}}\geq (\log{\frac{1}{t}})^{(\log{\frac{1}{t}})^{-1}}$, i.e.,
 $$
 \frac{\log\log \frac{1}{H}}{\log\log\frac{1}{t}}\geq \frac{\log \frac{1}{H}}{\log\frac{1}{t}}.
 $$
Inserting this into \eqref{equ-ch-1}, we find that
\begin{equation}\label{eq-11-2-01}
   -\frac{2\alpha}{2\alpha+1}\log{n^2}\le \Xi\leq 0.
\end{equation}
}
 From \eqref{equ-ch-2}, we see that
\begin{align}\label{equ-43-1}
h^{[-1]}(h(H)/n)=t=e^{-n^2\left(
 \frac{\log\log \frac{1}{H}}{\log n^2 +\log\log\frac{1}{H}+\Xi}\right)^{2\alpha}  \log \frac{1}{H}}=H^{n^2\left(
 \frac{\log\log \frac{1}{H}}{\log n^2 +\log\log\frac{1}{H}+\Xi}\right)^{2\alpha}}.
\end{align}

\vskip 5pt
\noindent {\it Step 2.  We  prove the following estimate:}
\begin{align}\label{equ-43-2}
    \log \frac{1}{H}\geq \frac{\log \frac{1}{4\delta}}{2nA^\alpha-1},
\end{align}
where
\begin{align}\label{equ-43-0}
   A:=\frac{\log\log \frac{1}{H}}{\log n^2 +\log\log\frac{1}{H}+\Xi},
   \end{align}
and
\begin{align}\label{2.16-w-g-s-9-29}
\delta^n:=\Big(\frac{g_n(H)}{4} \Big)^n,
   \end{align}
where $g_n(H)$ is given by \eqref{2.4-w-9-29} with $f$ replaced by $h$. By  the definition of $h$ (see \eqref{1.7w9-21}), we have
$$
h(s)\leq (\log \frac{1}{s})^{-\frac{1}{2}}(\log\log \frac{1}{H})^{-\alpha}\;\;\mbox{for each}\;\;s\in [h^{[-1]}(h(H)/n),H].
$$
With \eqref{1.7w9-21}, the above yields
\begin{align}\label{2.16-w-g-9-29}
    \frac{1}{h(H)}\int^H_{h^{[-1]}(h(H)/n)} \frac{h(s)}{s}\d s
   & \leq \frac{1}{h(H)}\int^H_{h^{[-1]}(h(H)/n)} s^{-1}(\log \frac{1}{s})^{-\frac{1}{2}}(\log\log \frac{1}{H})^{-\alpha}\d s\\
   &=(\log \frac{1}{H})^{\frac{1}{2}}\int^H_{h^{[-1]}(h(H)/n)} s^{-1}(\log \frac{1}{s})^{-\frac{1}{2}} \d s\nonumber\\
   &=2(\log \frac{1}{H})^{\frac{1}{2}}\Big( (\log \frac{1}{h^{[-1]}(h(H)/n)})^{\frac{1}{2}}  -(\log \frac{1}{H})^{\frac{1}{2}}\Big)\nonumber\\
   &=2(\log \frac{1}{H})( nA^\alpha-1).\nonumber
\end{align}
In the last step of \eqref{2.16-w-g-9-29}, we used \eqref{equ-43-1} and \eqref{equ-43-0}.
Because of \eqref{2.16-w-g-s-9-29},  it follows from \eqref{2.4-w-9-29} and \eqref{2.16-w-g-9-29} that
$$
\delta^n\geq 4^{-n}\exp\left\{ n\big(\log H - 2(\log \frac{1}{H})( nA^\alpha-1)\big) \right\},
$$
which leads to \eqref{equ-43-2}.
\vskip 5pt
\noindent {\it Step 3. We prove \eqref{equ-47-1}.
}
\vskip 5pt

To this end, we first claim that
\begin{align}\label{equ-43-3}
  h(H)\lesssim  (\log \frac{1}{4\delta})^{-\frac{1}{2}} n^{\frac12} (\log (n+e))^{-\alpha/3}.
\end{align}
  Let $\beta\in(0,1)$ be a parameter to be determined later. We prove \eqref{equ-43-3} by considering two cases.

  {\it Case 1:} When
  \begin{align}\label{2.18-w-g-2-29}
 \log \log \frac{1}{H}>(\log (n+e))^\beta.
\end{align}
    By the definition of $h$ (see \eqref{1.7w9-21}) and \eqref{2.18-w-g-2-29},
  we have
\begin{align}\label{equ-44-0}
    h(H)\leq (\log \frac{1}{H})^{-\frac{1}{2}}(\log (n+e))^{-\alpha \beta}.
\end{align}
Meanwhile, it follows from \eqref{equ-43-0} and \eqref{eq-11-2-01} that  $A\le 1$, which, along with  \eqref{equ-43-2}, implies
$$
    \log \frac{1}{H}\gtrsim \frac{\log \frac{1}{4\delta}}{2n}.
$$
Putting the above into \eqref{equ-44-0} yields
\begin{align}\label{equ-44-1}
 h(H)\lesssim ( \log \frac{1}{4\delta})^{-\frac{1}{2}}n^{\frac{1}{2}}(\log (n+e))^{-\alpha \beta}.
\end{align}

{\it  Case 2:} When
\begin{align}\label{2.21-w-g-9-29}
\log \log \frac{1}{H}\leq (\log (n+e))^\beta.
\end{align}
By \eqref{eq-11-2-01},
 \eqref{equ-43-0} and \eqref{2.21-w-g-9-29}, we see
  that $A\lesssim (\log (n+e))^{-(1-\beta)}$. This, along with \eqref{equ-43-2}, leads to
  $$
\log \frac{1}{H}\gtrsim \frac{\log\frac{1}{4\delta}}{n(\log (n+e))^{-\alpha(1-\beta)}}.
$$
Since $\log\log\frac{1}{H}\gtrsim 1$, the above, along with  the definition of $h$ (see \eqref{1.7w9-21}), shows
\begin{align}\label{equ-44-2}
    h(H)\lesssim (\log \frac{1}{H})^{-\frac{1}{2}}\lesssim (\log \frac{1}{4\delta})^{-\frac{1}{2}}n^{\frac{1}{2}}(\log (n+e))^{-\frac{\alpha(1-\beta)}{2}}.
\end{align}

Now, we choose $\beta=\frac{1}{3}$. Then we have $\alpha \beta=\frac{\alpha(1-\beta)}{2}=\frac{\alpha}{3}$.
Thus,  \eqref{equ-43-3} follows from  \eqref{equ-44-1} and \eqref{equ-44-2} at once.

Finally, by  \eqref{equ-43-3}, we can apply Lemma \ref{lem-abs} to deduce
that
\eqref{equ-47-1} holds for each $0<\delta< 1$.
\end{proof}

Now, we will present an extension of Remez's inequality at the scale of  $c_{h_\alpha}$
\begin{remark}
The classical Remez inequality states the following:
{\it Given an interval $I$, there exists a numerical constant $C > 0$ such that for any polynomial $P$ of degree $n$ and any subset $E \subset I$ with positive Lebesgue measure, the inequality
\begin{align}\label{2.28-w-9-30-revised}
\sup_{x \in I}|P(x)| \leq \left(\frac{C|I|}{|E|}\right)^n \sup_{x \in E}|P(x)|
\end{align}
holds, where $|I|$ and $|E|$ denote the $1$-dim Lebesgue measures of $I$ and $E$ respectively.}
This inequality has been  generalized in \cite{Friedland2017sp} to sets $E$ with positive $\delta$-Hausdorff content ($\delta > 0$). Our extension further refines this by considering sets $E$ that merely possess a positive ${h_\alpha}$-content,  allowing $E$ to be smaller. However, this comes at the cost of the constant in our inequality, analogous to
$C$ in \eqref{2.28-w-9-30-revised}, now depending on $n$ and diverging as $n\to \infty$.
\end{remark}

We will use $D_k$ (with $k>0$)  to denote  the disc $\{z\in \mathbb{C}\; :\; |z|\leq k\}$.

\begin{lemma}[Remez's inequality at scale of $c_{h_\alpha}$]\label{lem-remez}
Let $\alpha>0$. There is a constant  $C>0$, depending only on $\alpha$, such that
for any polynomial $P$  of degree $n$ and any subset $Z\subset D_1$ with $c_{h_\alpha}(Z)>0$,
\begin{align}\label{2.29-w-9-30}
\sup_{z\in D_1}|P(z)|\leq  12^n\exp\Big\{\frac{C n^2(\log (n+e))^{-\frac{2\alpha}{3}}}{c^2_{h_\alpha}(Z)}\Big\}\sup_{z\in Z}|P(z)|.
\end{align}
\end{lemma}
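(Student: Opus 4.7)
\medskip

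\noindent\textbf{Proof plan for Lemma \ref{lem-remez}.}
The plan is to reduce to a monic polynomial, factor its roots according to whether they lie inside or outside a suitable disc, and then apply the level-set estimate of Lemma \ref{lem-ch} only to the ``interior'' factor, where it gives meaningful information. Since both sides of \eqref{2.29-w-9-30} are homogeneous in $P$, I will first divide by the leading coefficient and assume $P$ is monic. Writing $P(z)=P_{\text{in}}(z)P_{\text{out}}(z)$, where the monic factor $P_{\text{in}}$ (of some degree $m\le n$) collects the roots $z_j$ with $|z_j|\le 2$ and $P_{\text{out}}$ (monic of degree $n-m$) collects the roots with $|z_j|>2$, my aim is to control the ratio $\sup_{D_1}|P|/\sup_Z|P|$ by handling each factor separately.

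For the exterior factor I use a Harnack-type comparison. If $|z_j|>2$ and $z,z'\in D_1$, then
$\frac{|z-z_j|}{|z'-z_j|}\le\frac{|z_j|+1}{|z_j|-1}\le 3$, so for every $z,z'\in D_1$,
$|P_{\text{out}}(z)|\le 3^{n-m}|P_{\text{out}}(z')|$.
For the interior factor I use the crude bound $\sup_{D_1}|P_{\text{in}}|\le 3^m$, since $|z-z_j|\le 3$ when $z\in D_1$ and $|z_j|\le 2$.

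The main step is to pick a point $z_0\in Z$ at which $|P_{\text{in}}(z_0)|$ is not too small. I set
\[
\delta_0:=\tfrac14\exp\!\left\{-\,\frac{C_1^2\, m\,(\log(m+e))^{-2\alpha/3}}{c_{h_\alpha}(Z)^2}\right\},
\]
where $C_1>0$ is the constant from Lemma \ref{lem-ch} (enlarged slightly to make the forthcoming inequality strict). With this choice the monotonicity of $c_{h_\alpha}$ and Lemma \ref{lem-ch} applied to the monic polynomial $P_{\text{in}}$ of degree $m$ give
\[
c_{h_\alpha}\bigl(\{z:|P_{\text{in}}(z)|\le \delta_0^{m}\}\bigr)\le C_1 \bigl(\log\tfrac{1}{4\delta_0}\bigr)^{-1/2} m^{1/2}(\log(m+e))^{-\alpha/3}<c_{h_\alpha}(Z),
\]
so there exists $z_0\in Z$ with $|P_{\text{in}}(z_0)|>\delta_0^{m}$ (the case $m=0$ is trivial). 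Combining the three bounds yields
\[
\sup_{z\in D_1}|P(z)|\le 3^m\sup_{z\in D_1}|P_{\text{out}}(z)|\le 3^m\cdot 3^{n-m}|P_{\text{out}}(z_0)|
=\frac{3^n}{|P_{\text{in}}(z_0)|}\,|P(z_0)|\le \frac{3^n}{\delta_0^{m}}\sup_{z\in Z}|P(z)|.
\]

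Finally, substituting the value of $\delta_0$ gives the factor $3^n\cdot 4^m\le 12^n$ (this is precisely why the radius $2$ was chosen as the dividing line for the roots) multiplied by $\exp\{C_1^2 m^2(\log(m+e))^{-2\alpha/3}/c_{h_\alpha}(Z)^2\}$. The only remaining issue, and the main technical nuisance rather than a genuine obstacle, is to replace the $m$-dependent exponent by the $n$-dependent one in \eqref{2.29-w-9-30}. Since $t\mapsto t^2(\log(t+e))^{-2\alpha/3}$ is increasing for $t$ beyond an $\alpha$-dependent threshold and $m\le n$, one obtains the desired bound on the exponent after absorbing the bounded regime into the final constant $C=C(\alpha)$. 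Undoing the normalization by the leading coefficient then yields \eqref{2.29-w-9-30} for arbitrary polynomials $P$ of degree $n$.
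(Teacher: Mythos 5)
Your proof is correct and follows essentially the same route as the paper's: the same splitting of the roots at radius $2$, the same ratio bound $3^{n-m}$ for the exterior factor, the same crude bound $3^m$ for the interior factor on $D_1$, and the same invocation of Lemma \ref{lem-ch} to produce the exponential factor (with the identical final step of replacing $m^2(\log(m+e))^{-2\alpha/3}$ by $Cn^2(\log(n+e))^{-2\alpha/3}$). The only difference is cosmetic: you apply Lemma \ref{lem-ch} in contrapositive form to the monic interior factor to extract a point $z_0\in Z$ with $|P_{\mathrm{in}}(z_0)|>\delta_0^{m}$, whereas the paper normalizes $\sup_Z|P|\leq 1$ and uses the inclusion $Z\subset E(P;1)$ to bound the leading coefficient; the two formulations are logically equivalent and yield the same constants.
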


\begin{proof}
We arbitrarily fix $\alpha>0$ and simply write $h$ for $h_\alpha$. Throughout the proof, let $C$ represent various positive constants depending only on $\alpha$, which may change from line to line.

 It is clear that $\sup_{z\in Z}|P(z)|>0$. Thus, without loss of generality, we can assume that
  $|P(z)|\leq 1$ when $z\in Z$. To show \eqref{2.29-w-9-30}, it suffices to prove
\begin{align}\label{equ-228-0}
\max_{z\in D_1}|P(z)| \leq 12^n\exp\Big\{\frac{C n^2(\log (n+e))^{-\frac{2\alpha}{3}}}{c^2_h(Z)}\Big\}.
\end{align}
 We write  $P(z):=A\prod_{j=1}^n(z-z_j)$, with $z_j\in \mathbb{C}$.
  Before proving \eqref{equ-228-0}, we claim
\begin{align}\label{equ-228-1}
 |A|\leq 4^n\exp\Big\{\frac{C n^2(\log (n+e))^{-\frac{2\alpha}{3}}}{c^2_h(Z)}\Big\}.
\end{align}
For this purpose, two facts are given as follows: First, since $|P(z)|\leq 1$ when $z\in Z$, we have
 $Z\subset E(P;1)$ (see \eqref{2.1-w-9-29}). The monotonicity of
 $c_h$ implies
 $c_h(Z)\leq c_h(E(P;1))$. Second, it is clear that  $E(P;1)=E(A^{-1}P;A^{-\frac{1}{n}})$.

 If  $|A|\lesssim 1$,  then \eqref{equ-228-1} follows from the fact that $c_h(Z)\leq c_h(D_1)\lesssim 1$.
 While if
  $|A|\gg 1$, then by  Lemma \ref{lem-ch} where $P$ is replaced by  $A^{-1}P$ (a monic polynomial)
  and the above two facts, we can find
  $C>0$, depending only on $\alpha$, such that  for all $n\geq 1$,
$$
c_h(Z)\leq c_h(E(A^{-1}P;A^{-\frac{1}{n}}))\leq C\Big(\log \frac{A^{\frac{1}{n}}}{4}\Big)^{-\frac{1}{2}}  n^{\frac12}(\log (n+e))^{-\alpha/3}.
$$
Solving the above inequality shows the claim \eqref{equ-228-1}.

Now, we  consider two cases to prove \eqref{equ-228-0}:

{\it Case 1:} If $|z_j|\leq 2$ for all
   $1\leq j\leq n$, then for any $z\in D_1$ and  $j\in\{1,\dots,n\}$, we have $|z-z_j|\leq 3$. This, along with  \eqref{equ-228-1}, implies
$$
\max_{z\in D_1}|P(z)|\leq 3^n|A|\leq 12^n\exp\Big\{\frac{C n^2(\log (n+e))^{-\frac{2\alpha}{3}}}{c^2_h(Z)}\Big\},
$$
thus \eqref{equ-228-0} holds for the first  case.

{\it Case 2:}  If there exists  $1\leq n_1<n$ such that  $|z_j|\leq 2$ for all $1\leq j\leq n_1$ and $|z_j|>2$ for $n_1<j\leq n$,
   we write
   $$
P_1(z):=A\prod_{j=1}^{n_1}(z-z_j), \quad P_2(z):=\prod_{j=n_1+1}^{n}(z-z_j).
$$
   Then we have  $P=P_1P_2$.
Since  any two points $v_1,v_2\in D_1$ satisfy $|P_2(v_1)/P_2(v_2)|\leq 3^{n-n_1}$, we find
\begin{align}\label{2.32-w-9-30}
\frac{\sup_{D_1}|P(z)|}{\sup_Z|P(z)|}\leq 3^{n-n_1}\frac{\sup_{D_1}|P_1(z)|}{\sup_Z|P_1(z)|}.
\end{align}
Given that  all zeros of $P_1$ are contained in $D_2$, by \eqref{2.32-w-9-30} and the result obtained in the first case, we obtain
\begin{align*}
 \frac{\sup_{D_1}|P(z)|}{\sup_Z|P(z)|}&\leq 3^{n-n_1} 12^{n_1}\exp\Big\{\frac{C {n_1}^2(\log (n_1+e))^{-\frac{2\alpha}{3}}}{c^2_h(Z)}\Big\}\\
 &\leq 12^n\exp\Big\{\frac{CC' {n}^2(\log (n+e))^{-\frac{2\alpha}{3}}}{c^2_h(Z)}\Big\},
\end{align*}
where in the last step we used the fact that for some $C'>0$ (depending only on $\alpha$),
 $$
 {n_1}^2(\log (n_1+e))^{-\frac{2\alpha}{3}}\leq C'{n}^2(\log (n+e))^{-\frac{2\alpha}{3}}\;\;\mbox{for all}\;\;n\geq n_1.
 $$
 Thus \eqref{equ-228-0}  holds for the second case.

 Hence, we complete the proof.
\end{proof}

\subsection{Propagation of smallness  from sets of positive ${h_\alpha}$-content in $1$-dim}\

 In this subsection, utilizing the Remez inequalities established previously, we will prove various versions of propagation of smallness inequalities,  from log-type Hausdorff content sets for analytic functions. These inequalities will be essential in deriving subsequent spectral inequalities.

\begin{lemma}\label{lem-ana}
Let $\alpha>0$ and let $I$ be an interval with $|I|=1$ and $0\in I$. Then there is a constant $C>0$, depending only on $\alpha$, such that
when  $\phi:D_5\mapsto \C$ is an analytic function with $|\phi(0)|\geq 1$ and $E\subset I$ is a subset with $c_{h_\alpha}(E)>0$,
$$
\sup_{x\in I}|\phi(x)|\leq M^9\exp\Big\{\frac{C (\log M)^2(\log ( \frac{\log M}{\log 2}+e) )^{-\frac{2\alpha}{3}}}{c^2_{h_\alpha}(E)}\Big\}\sup_{x\in E}|\phi(x)|,
$$
where $M:=\sup_{z\in D_4}|\phi(z)|$.
\end{lemma}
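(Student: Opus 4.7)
The plan is to reduce the inequality to Lemma~\ref{lem-remez} applied to a polynomial factor capturing the zeros of $\phi$ in $D_2$. Let $z_1,\dots,z_N$ be the zeros of $\phi$ in $D_2$, counted with multiplicity. Jensen's formula on the circle $|z|=4$, together with $|\phi(0)|\geq 1$ and $\sup_{D_4}|\phi|\leq M$, gives
\[
N\log 2 \leq \sum_{|z_j|\leq 2}\log\frac{4}{|z_j|} \leq \sum_{|z_j|<4}\log\frac{4}{|z_j|} = \frac{1}{2\pi}\int_0^{2\pi}\log|\phi(4e^{i\theta})|\,d\theta - \log|\phi(0)| \leq \log M,
\]
so $N\leq \log M/\log 2$. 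I then write $\phi = Qg$, where $Q(z)=\prod_{j=1}^{N}(z-z_j)$ is a monic polynomial of degree $N$ and $g := \phi/Q$ is analytic on $D_5$ and non-vanishing on a neighborhood of $D_2$.

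Next I establish two-sided bounds on $g$ on $D_1$. For $|z|=4$ one has $|z-z_j|\geq 2$ for every $j$, so $|Q(z)|\geq 2^N$ and $|g(z)| \leq M\cdot 2^{-N}$; the maximum principle for the analytic function $g$ on $D_4$ extends this to $\sup_{D_2}|g| \leq M\cdot 2^{-N}$. On the other hand, $|Q(0)| = \prod|z_j| \leq 2^N$, so $|g(0)|\geq 2^{-N}$. Therefore $\tilde u := \log(M\cdot 2^{-N}) - \log|g|$ is nonnegative and harmonic on $D_2$ with $\tilde u(0)\leq \log M$, and Harnack's inequality with $R=2$, $r=1$ yields $\tilde u(z)\leq 3\log M$ on $D_1$, i.e.\ $\inf_{D_1}|g| \geq M^{-2}\cdot 2^{-N}$. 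Consequently, for $z\in E\subset D_1$,
\[
|Q(z)| = |\phi(z)|/|g(z)| \leq 2^N M^2\,\sup_E|\phi|,
\]
and Lemma~\ref{lem-remez} applied to $Q$ on $D_1$ with the set $E$ gives
\[
\sup_{D_1}|Q| \leq 12^N \exp\Big\{\frac{C_\alpha N^2(\log(N+e))^{-2\alpha/3}}{c^2_{h_\alpha}(E)}\Big\}\cdot 2^N M^2\,\sup_E|\phi|.
\]

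Multiplying this by the upper bound $M\cdot 2^{-N}$ on $|g|$ and using the pointwise identity $\phi = Qg$ on $I\subset D_1$ yields
\[
\sup_I|\phi| \leq \sup_{D_1}|Q|\cdot\sup_{D_1}|g| \leq 12^N \exp\Big\{\frac{C_\alpha N^2(\log(N+e))^{-2\alpha/3}}{c^2_{h_\alpha}(E)}\Big\}\, M^3\,\sup_E|\phi|.
\]
Since $N\leq \log M/\log 2$ and the map $x\mapsto x^2(\log(x+e))^{-2\alpha/3}$ is nondecreasing on $[0,\infty)$, one has $12^N\leq M^{\log_2 12}$ and the exp factor is controlled by its value at $N = \log M/\log 2$, matching (after absorbing $(\log 2)^{-2}$ into $C$) the form required by the statement. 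Using $M\geq |\phi(0)|\geq 1$ and $3+\log_2 12 < 9$, the bound $\sup_I|\phi|\leq M^9\exp\{\cdots\}\sup_E|\phi|$ follows. The main obstacle I anticipate is keeping the Harnack constant (equal to $3$ here) sharp enough in the lower bound on $|g|$: replacing Harnack by a direct Borel--Caratheodory estimate on $\log g$ would inflate the power of $M$ appearing in the final estimate well beyond $9$ and make the target $M^9$ unreachable by this route.
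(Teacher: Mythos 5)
Your argument is correct and follows essentially the same route as the paper's proof: factor out the zeros of $\phi$ in $D_2$, apply Lemma~\ref{lem-remez} to the polynomial factor, control the zero-free factor via the maximum principle and Harnack's inequality, and count the zeros with Jensen's formula; the only difference is that you use the plain monic product $\prod_j(z-z_j)$ where the paper uses a scaled Blaschke product, which changes the bookkeeping (your exponent $M^{3+\log_2 12}$ versus the paper's $M^{3+\log_2 36}$, both below $M^9$) but not the substance. One small inaccuracy: the map $x\mapsto x^{2}(\log(x+e))^{-2\alpha/3}$ is \emph{not} nondecreasing on $[0,\infty)$ when $\alpha>3$ (its derivative has the sign of $\log(x+e)-\tfrac{\alpha}{3}\tfrac{x}{x+e}$), but since the comparison is only needed when $N\ge 1$, which forces $\log M\ge \log 2$, one still has $N^{2}(\log(N+e))^{-2\alpha/3}\le C_\alpha\,(\tfrac{\log M}{\log 2})^{2}(\log(\tfrac{\log M}{\log 2}+e))^{-2\alpha/3}$ with $C_\alpha$ depending only on $\alpha$ --- exactly the kind of constant the paper itself absorbs --- so the conclusion is unaffected.
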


\begin{proof} Our proof  is inspired by \cite{Kovrijkine2001some} and make use of   Lemma \ref{lem-remez}.

We arbitrarily fix $\alpha>0$ and simply write $h$ for $h_\alpha$. Then we arbitrarily fix an analytic function $\phi$ on $D_5$ (with
$|\phi(0)|\geq 1$) and a subset $E\subset I$ (with  $c_h(E)>0$).
Let $a_1,a_2,\cdots,a_m$ be the zeros of $\phi$ in $D_2$.
Since
 $|\phi(0)|\neq 0$, it follows that $a_k\neq 0$ for all $k\in\{1,\dots,m\}$. We define the  scaled Blaschke product:
$$
B(z)= \prod_{1\leq k\leq m}  \frac{2(a_k-z)}{4-\bar{a}_k z}:=\frac{P(z)}{Q(z)},\;\;z\in D_2.
$$
It is straightforward to verify that $B$ is  analytic over $D_2$ and satisfies
\begin{align}\label{equ-remez-0}
|B(z)|=1 \;\; \mbox{ for all }\; |z|=2.
\end{align}
(Here, we used the fact that when $|z|=2$, $|\frac{2(a_k-z)}{4-\bar{a}_k z}|=1$  for all $1\leq k\leq m$.)

Then we define
 $g:D_2\mapsto \C$ by
\begin{align}\label{equ-remez-1}
  g(z)=\phi(z)/B(z), \quad z\in D_2.
\end{align}
Clearly,  $g$ is a zero-free analytic function on $D_2$; $|g(0)|\geq 1$;
 \eqref{equ-remez-1} can be rewritten as
\begin{align}\label{equ-remez-2.5}
    \phi(z)=g(z)\cdot \frac{P(z)}{Q(z)}.
\end{align}

Next, we claim
\begin{align}\label{equ-remez-5}
\frac{\sup_{x\in I}|\phi(x)|}{\sup_{x\in E}|\phi(x)|} \leq M^336^m\exp\Big\{\frac{C m^2(\log m)^{-\frac{2\alpha}{3}}}{c_h(E)}\Big\}.
\end{align}
To this end, several estimates associated with $g$, $P$ and $Q$, will be built in order.

First, by \eqref{equ-remez-0}, \eqref{equ-remez-2.5}
and the definition of $M$, we see
that  $|g(z)|\leq M$, when $|z|=2$. This, along with the maximum modulus principle, yields
\begin{align}\label{equ-remez-2}
|g(z)|\leq M\;\; \mbox{ for all } \;\;z\in D_2.
\end{align}

Second, since $g(z)\neq 0$ for all $z\in D_2$,  it follows from  \eqref{equ-remez-2} that $z\rightarrow \log M-\log |g(z)|$ is
 a non-negative harmonic function on $D_2$. Using  Harnack's inequality and  the fact that
 $|g(0)|\geq 1$, we have
$$
\max_{z\in D_{1}}(\log M-\log |g(z)|)\leq 3 \min_{z\in D_{1}}(\log M-\log |g(z)|) \leq 3\log M,
$$
which gives $\min_{D_{1}} |g(z)|\geq M^{-2}$. This,  together with \eqref{equ-remez-2}
and the fact that $I\subset {D_1}$,
 implies
$$
\frac{\sup_{x\in I}|g(x)|}{\inf_{x\in I}|g(x)|}\leq M^3.
$$
Meanwhile, a direct computation gives
$$
\frac{\sup_{x\in I}|Q(x)|}{\inf_{x\in I}|Q(x)|}\leq \frac{\prod_{1\leq k\leq m}(4+|a_k|)}{\prod_{1\leq k\leq m}(4-|a_k|)}\leq 3^m.
$$
The above two inequalities lead to
\begin{align}\label{equ-remez-3}
\frac{\sup_{x\in I}|g(x)/Q(x)|}{\inf_{x\in I}|g(x)/Q(x)|}\leq \frac{\sup_{x\in I}|g(x)|/\inf_{x\in I}|Q(x)|}{\inf_{x\in I}|g(x)|/\sup_{x\in I}|Q(x)|}\leq 3^mM^3.
\end{align}

Third,  applying Lemma \ref{lem-remez} to the above polynomial $P$, we get
\begin{align}\label{equ-remez-4}
\sup_{I}|P(z)|\leq  12^m\exp\{\frac{C m^2(\log (m+e))^{-\frac{2\alpha}{3}}}{c^2_h(E)}\}\sup_{E}|P(z)|.
\end{align}
Since
\begin{align*}
\frac{\sup_{x\in I}|\phi(x)|}{\sup_{x\in E}|\phi(x)|} \leq  \frac{\sup_{x\in I}|g(x)/Q(x)|}{\inf_{x\in I}|g(x)/Q(x)|} \cdot\frac{\sup_{x\in I}|P(x)|}{\sup_{x\in E}|P(x)|},
\end{align*}
we obtain \eqref{equ-remez-5}  from
\eqref{equ-remez-2.5},  \eqref{equ-remez-3} and \eqref{equ-remez-4} at once.

Finally, we will estimate $m$  in terms of $M$. We denote by  $\mathfrak{n}(r)$ the number of zeros of $\phi$ in the disc $D_r$. Since $\phi$ is analytic in $D_5$, by Jensen formula, we have for all $R<5$
\begin{align}\label{equ-jensen}
\int_0^R \mathfrak{n}(r) \frac{\d r}{r}=\frac{1}{2 \pi} \int_0^{2 \pi} \log \left|\phi\left(R e^{i \theta}\right)\right| \d \theta-\log |\phi(0)| .
\end{align}
Applying \eqref{equ-jensen} with $R=4$, using the definition of $M$ and $|\phi(0)|\geq 1$, we obtain
$$
\log M\geq \int_0^4 \mathfrak{n}(r) \frac{\d r}{r}\geq \int_2^4 \mathfrak{n}(r) \frac{\d r}{r} \geq  \int_2^4 \mathfrak{n}(2) \frac{\d r}{r}= \mathfrak{n}(2) \log 2,
$$
which gives
$m=\mathfrak{n}(2) \leq {\log M}/{\log 2}$.
Thus the right hand side of \eqref{equ-remez-5} is bounded by
$$
M^9\exp\Big\{\frac{C (\log M)^2(\log ( \frac{\log M}{\log 2}+e))^{-\frac{2\alpha}{3}}}{c^2_h(E)}\Big\}.
$$
With the above, \eqref{equ-remez-5} yields the desired inequality.
This completes the proof.
\end{proof}
\begin{remark}\label{rem-Kov}
$(i)$
It is proved in \cite{Kovrijkine2001some} that if $E\subset I$ is subset of  positive Lebesgue measure.
$I$ and $\phi$ are the same as that in Lemma \ref{lem-ana},
then
\begin{align}\label{2.43-w-10-3}
\sup_{x\in I}|\phi(x)|\leq ({C}/{|E|})^{\frac{\log M}{\log 2}}\sup_{x\in E}|\phi(x)|,
\end{align}
where $M=\sup_{z\in D_2}|\phi(z)|$.


$(ii)$ With some minor modifications of the statement, we can remove the condition that $|\phi(0)|\geq 1$ from
 Lemma \ref{lem-ana}.  This will be done in the subsequent  corollary.
\end{remark}

\begin{corollary}\label{cor-ana}
Let $\alpha>0$ and let $I\subset \R$ be  a closed interval with $|I|=1$ and $0\in I$. Then there is a constant $C>0$, depending only on
$\alpha$, such that when  $\phi:D_6\mapsto \C$ is analytic and $E\subset I$ is a subset with $c_{h_\alpha}(E)>0$,
$$
\sup_{x\in I}|\phi(x)|\leq M^9\exp\Big\{\frac{C (\log M)^2(\log ( \frac{\log M}{\log 2}+e) )^{-\frac{2\alpha}{3}}}{c^2_{h_\alpha}(E)}\Big\}\sup_{x\in E}|\phi(x)|,
$$
where $M=\sup_{z\in D_5}|\phi(z)|/\sup_{x\in I}|\phi(x)|$.
\end{corollary}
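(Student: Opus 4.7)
The plan is to reduce Corollary \ref{cor-ana} to Lemma \ref{lem-ana} by a translation and normalization trick that forces the hypothesis $|\phi(0)|\geq 1$ to hold for a rescaled function.

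First I would dispose of the trivial case $\sup_{x\in I}|\phi(x)|=0$, in which case $\phi\equiv 0$ on $I$ (and indeed everywhere by analyticity), so both sides of the claimed inequality are zero. Otherwise, since $I$ is compact and $\phi$ is continuous, I pick $x_0\in I$ with $|\phi(x_0)|=\sup_{x\in I}|\phi(x)|>0$, and set
\[
\psi(z):=\frac{\phi(z+x_0)}{\phi(x_0)}, \qquad I':=I-x_0, \qquad E':=E-x_0.
\]
Since $x_0\in I\subset[-1,1]$, the map $z\mapsto z+x_0$ sends $D_5$ into $D_6$, so $\psi$ is analytic on $D_5$; moreover $|\psi(0)|=1$, $0\in I'$, $|I'|=1$, and $E'\subset I'$ with $c_{h_\alpha}(E')=c_{h_\alpha}(E)$ by translation invariance of Hausdorff content.

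Next I would apply Lemma \ref{lem-ana} to $\psi$, $I'$, $E'$, obtaining
\[
\sup_{y\in I'}|\psi(y)|\leq M_\psi^{9}\exp\!\left\{\frac{C(\log M_\psi)^2\bigl(\log(\tfrac{\log M_\psi}{\log 2}+e)\bigr)^{-2\alpha/3}}{c_{h_\alpha}^2(E')}\right\}\sup_{y\in E'}|\psi(y)|,
\]
where $M_\psi=\sup_{z\in D_4}|\psi(z)|$. Multiplying through by $|\phi(x_0)|=\sup_I|\phi|$ and using $\sup_{I'}|\psi|=\sup_I|\phi|/|\phi(x_0)|$ and $\sup_{E'}|\psi|=\sup_E|\phi|/|\phi(x_0)|$ turns this into the desired bound with $M_\psi$ in place of $M$.

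The last step is to replace $M_\psi$ by the quantity $M=\sup_{D_5}|\phi|/\sup_I|\phi|$ appearing in the statement. For any $z\in D_4$ we have $z+x_0\in D_5$, so
\[
M_\psi=\sup_{z\in D_4}\frac{|\phi(z+x_0)|}{|\phi(x_0)|}\leq \frac{\sup_{D_5}|\phi|}{\sup_I|\phi|}=M.
\]
Since $M\geq 1$ (as $I\subset D_5$), the function $t\mapsto t^{9}\exp\{C(\log t)^2(\log(\tfrac{\log t}{\log 2}+e))^{-2\alpha/3}/c_{h_\alpha}^2(E)\}$ is nondecreasing on $[1,\infty)$, so we may upgrade $M_\psi$ to $M$ in both places it appears, yielding the claimed inequality. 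The only mildly delicate point is checking the monotonicity of the exponential factor in $M$, but this is elementary since $\log(\tfrac{\log M}{\log 2}+e)$ grows far more slowly than $(\log M)^2$, so no genuine obstacle arises.
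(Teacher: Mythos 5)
Your proposal is correct and follows essentially the same route as the paper's own proof: translate by a maximizing point $x_0\in I$, normalize by $|\phi(x_0)|$ so that Lemma \ref{lem-ana} applies, and then replace $M_\psi=\sup_{D_4}|\psi|$ by the larger quantity $M=\sup_{D_5}|\phi|/\sup_I|\phi|$ using monotonicity of the bound. The only difference is that you make the monotonicity check explicit, which the paper leaves implicit.
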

\begin{proof}

We arbitrarily fix $\alpha>0$
and denote $h_\alpha$ simply as $h$. Without loss of generality, we assume that $\sup_{x\in I}|\phi(x)|\neq 0$, since otherwise $\phi$ would be identically zero, making the desired inequality trivial.

Let $x_0\in I$ be such that  $ |\phi(x_0)|=\max_{x\in I}|\phi(x)|$. Consequently, $|\phi(x_0)|\neq 0$. We then define
  $$
  \psi(z)=\phi(z+x_0)/|\phi(x_0)|, \;\;z\in D_5.
  $$
 It is clear that $\psi$ is analytic  on $D_5$ and $|\psi(0)|=1$. Moreover, we have
 $$
 0\in I-x_0;\;\; |I-x_0|=1;\;\;c_h(E-x_0)=c_h(E)>0.
 $$
   Thus we can apply Lemma \ref{lem-ana} to $\psi$ and obtain
$$
1\leq \sup_{x\in I-x_0}|\psi(x)|\leq M^9\exp\Big\{\frac{C (\log M)^2(\log ( \frac{\log M}{\log 2}+e))^{-\frac{2\alpha}{3}}}{c^2_h(E)}\Big\}\sup_{x\in E-x_0}|\psi(x)|,
$$
where $M=\sup_{z\in D_4}|\psi(z)|\leq \sup_{z\in D_5}|\phi(z)|/\sup_{x\in I}|\phi(x)|$. Rewriting the above inequality in terms of $\phi$  yields the desired estimate, thereby concluding the proof.
\end{proof}

The next corollary is a consequence of Corollary \ref{cor-ana}. It will be used in our studies for the case of $d$-dim.

\begin{corollary}\label{cor-inter-820}
Under the  assumptions of Corollary \ref{cor-ana}, there exists a constant $C>0$, depending on $\alpha$ and $c_{h_\alpha}(E)$, such that for each $\varepsilon\in (0,e^{-3}]$,
$$
\sup_{x\in I}|\phi(x)|\leq \varepsilon \sup_{z\in D_5}|\phi(z)|+ \exp\left\{C\frac{(\log \frac{1}{\varepsilon})^2 }{(\log\log \frac{1}{\varepsilon})^{2\alpha/3}} \right\}\sup_{x\in E}|\phi(x)|.
$$
\end{corollary}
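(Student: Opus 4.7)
The plan is to derive the statement from Corollary \ref{cor-ana} by a standard dichotomy argument on the size of $M = \sup_{D_5}|\phi|/\sup_I|\phi|$, comparing $M$ with the threshold $1/\varepsilon$.

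First I would handle the easy case $M \geq 1/\varepsilon$. By the definition of $M$,
$$\sup_{x\in I}|\phi(x)| = \frac{\sup_{z\in D_5}|\phi(z)|}{M} \leq \varepsilon \sup_{z\in D_5}|\phi(z)|,$$
which already yields the desired inequality (the second term on the right may be dropped).

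In the remaining case $M < 1/\varepsilon$, I would plug the bound on $M$ directly into the conclusion of Corollary \ref{cor-ana}. Using $\log M < \log(1/\varepsilon)$, the factor $M^9 = \exp(9\log M)$ is trivially absorbed into an exponential of the form $\exp(C\log(1/\varepsilon))$, which in turn is dominated by $\exp\{C(\log 1/\varepsilon)^2/(\log\log 1/\varepsilon)^{2\alpha/3}\}$ for $\varepsilon\leq e^{-3}$ (since $\log\log(1/\varepsilon)\geq \log 3$ there). It remains to bound the exponent in Corollary \ref{cor-ana}, namely
$$\Phi(M) := \frac{(\log M)^2}{\bigl(\log\bigl(\tfrac{\log M}{\log 2}+e\bigr)\bigr)^{2\alpha/3}},$$
by $C\cdot (\log 1/\varepsilon)^2/(\log\log 1/\varepsilon)^{2\alpha/3}$ when $M\leq 1/\varepsilon$.

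The main technical step is therefore to control $\Phi(M)$. I would split into subcases based on a threshold $x_0=x_0(\alpha)$. For small $M$ (say $\log M \leq x_0$) the quantity $\Phi(M)$ is bounded by a constant $C(\alpha)$, and since $\varepsilon\leq e^{-3}$ forces $(\log 1/\varepsilon)^2/(\log\log 1/\varepsilon)^{2\alpha/3}$ to be bounded below by a positive constant depending on $\alpha$, the inequality follows. For $\log M > x_0$, I would use monotonicity: a direct computation of the derivative of $x\mapsto x^2/(\log(x/\log 2+e))^{2\alpha/3}$ shows that it is increasing for $x\geq x_0(\alpha)$, whence $\Phi(M)\leq \Phi(1/\varepsilon)$. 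Finally, for $\varepsilon\leq e^{-3}$ the quantity $\log(\tfrac{\log(1/\varepsilon)}{\log 2}+e)$ is comparable (up to multiplicative constants depending only on $\alpha$) to $\log\log(1/\varepsilon)$, which converts $\Phi(1/\varepsilon)$ into the right-hand side of the claimed inequality. Collecting all the constants into a single $C=C(\alpha, c_{h_\alpha}(E))$ completes the proof.

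The hardest part is the monotonicity/comparison analysis of $\Phi$ above; everything else is straightforward bookkeeping. Since the function $\Phi$ is essentially $(\log M)^2$ divided by a double-log factor, and the $\log$ derivative calculation shows that the $2/x$ term dominates the $O(1/(x\log x))$ correction once $x$ exceeds a threshold depending on $\alpha$, this analysis is routine though it needs to be carried out with some care to justify the uniform constants.
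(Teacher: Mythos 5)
Your proposal is correct, and it takes a genuinely different (and shorter) route than the paper. The paper does not split on $M$ versus $1/\varepsilon$; instead it first converts the conclusion of Corollary \ref{cor-ana} into an intermediate multiplicative estimate of H\"older type, namely $Y\leq Z\exp\{-C\sqrt{\log(Z/X)\,(\log\log(Z/X))^{2\alpha/3}}\}$ with $X=\sup_E|\phi|$, $Y=\sup_I|\phi|$, $Z=\sup_{D_5}|\phi|$, which it obtains by taking logarithms, solving the resulting quadratic inequality in $\log Y$, and running a short iteration to replace $\log\log(Z/Y)$ by $\log\log(Z/X)$; only then does it dichotomize, comparing that exponential factor with $\varepsilon$. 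Your argument dichotomizes directly on $M=Z/Y$: when $M\geq 1/\varepsilon$ the first term on the right is immediate, and when $M<1/\varepsilon$ you feed $\log M<\log(1/\varepsilon)$ straight into Corollary \ref{cor-ana}, using that $\Phi(x)=x^2(\log(x/\log 2+e))^{-2\alpha/3}$ is increasing past a threshold $x_0(\alpha)$ and bounded below it, that $(\log\log t)^{2\alpha/3}=O(\log t)$ absorbs the $M^9$ factor, and that $\log(\tfrac{\log(1/\varepsilon)}{\log 2}+e)\geq\log\log(1/\varepsilon)$ handles the change of double-log. All of these checks go through for $\varepsilon\in(0,e^{-3}]$, and you end with a constant of the same form as the paper's. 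What the paper's longer route buys is the intermediate estimate \eqref{equ-coro-ana2-3}, a clean two-constants propagation inequality of independent interest; for the stated corollary your direct dichotomy suffices. The only soft spot is your parenthetical justification for absorbing $M^9$ (the relevant fact is $(\log\log(1/\varepsilon))^{2\alpha/3}\leq C(\alpha)\log(1/\varepsilon)$, not merely $\log\log(1/\varepsilon)\geq\log 3$), but this is routine and true.
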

\begin{proof}
We arbitrarily fix $\alpha>0$
and simply write $h$ for $h_\alpha$.  If $$M:=\sup_{z\in D_5}|\phi(z)|/\sup_{x\in I}|\phi(x)|\leq e^3,
$$
then the desired estimate follows directly from Corollary \ref{cor-ana}.

Hence we assume $M>e^3$ in the following. Applying  Corollary \ref{cor-ana}, we have
\begin{align}\label{equ-coro-ana2-1}
  \sup_{x\in I}|\phi(x)|\leq \exp\big\{ {C (\log M)^2(\log  \log M )^{-\frac{2\alpha}{3}}} \big\}  \sup_{x\in E}|\phi(x)|,
\end{align}
where   $C>0$ is a constant depending only on $\alpha$ and $c_h(E)$, and its value may vary  in different places.
We write
$$
X=\sup_{x\in E}|\phi(x)|, \;\;Y=\sup_{x\in I}|\phi(x)|,\;\; Z=\sup_{z\in D_5}|\phi(z)|.
$$
It is clear that  $X\leq Y\leq Z$. Moreover,  \eqref{equ-coro-ana2-1} can be rewritten as
\begin{align}\label{equ-coro-ana2-2}
Y\leq X\exp\left\{\frac{C}{A}\log^2 (\frac{Z}{Y})\right\}\tag{\ref{equ-coro-ana2-1}'}
\end{align}
where $A=(\log\log Z/Y)^{2\alpha/3}$. Taking the logarithm on  both sides of  \eqref{equ-coro-ana2-2} yields
\begin{align*}
    \log Y-\log X\leq \frac{C}{A}(\log Z-\log Y)^2,
\end{align*}
whence it can be deduced that
$$
\log Y\leq \log Z+\frac{A}{2C}-\sqrt{\frac{A}{C}\log \frac{Z}{X}+(\frac{A}{2C})^2}.
$$
Together with the fact  $A\lesssim \log (Z/X)$, the above gives
$$
Y\leq Z\exp\left\{ \frac{A}{2C}-\sqrt{\frac{A}{C}\log\frac{Z}{X}+(\frac{A}{2C})^2} \right\}
\leq Z \exp\{-\varepsilon_0\sqrt{\frac{A}{C}\log\frac{Z}{X}}\},
$$
where $0<\varepsilon_0<1$ is some small but fixed number.
 This can be rewritten as
$$
\frac{Z}{Y}\geq \exp\left\{ \varepsilon_0\sqrt{\frac{1}{C}\log\frac{Z}{X}(\log\log\frac{Z}{Y})^{2\alpha/3}} \right\}.
$$
Using  iteration in the above inequality shows
\begin{align*}
\frac{Z}{Y}&\geq \exp\left\{\varepsilon_0 \sqrt{\frac{1}{C}\log\frac{Z}{X}(\log \sqrt{\frac{1}{C}\log\frac{Z}{X}(\log\log\frac{Z}{Y})^{2\alpha/3}})^{2\alpha/3}}
\right\}\\
&\geq \exp\left\{C \sqrt{ \log\frac{Z}{X}(\log\log \frac{Z}{X} )^{2\alpha/3}}
\right\},
\end{align*}
 which leads to
\begin{align}\label{equ-coro-ana2-3}
Y\leq Z \exp\left\{-C \sqrt{ \log\frac{Z}{X}(\log\log \frac{Z}{X} )^{2\alpha/3}}
\right\}.
\end{align}

We now arbitrarily fix $\varepsilon\in (0,e^{-3}]$. We split the discussion into two cases.

{\it Case 1:}  $\exp\left\{-C \sqrt{ \log\frac{Z}{X}(\log\log \frac{Z}{X} )^{2\alpha/3}}\right\}>\varepsilon$,
we have
\begin{align}\label{2.44-w-10-1}
Z/X<t:=g^{[-1]}(\varepsilon),
\end{align}
where $g$ is the following strictly decreasing function:
 $$
g(s)=\exp\left\{-C \sqrt{  \log s  (\log\log s )^{2\alpha/3}}\right\},\;\;s\in (e,\infty).
$$
Meanwhile, direct computations show that
$$
 \log t = \frac{(\frac{1}{C}\log \frac{1}{\varepsilon})^2 }{(\log\log t)^{2\alpha/3}}=\frac{(\frac{1}{C}\log \frac{1}{\varepsilon})^2 }{(\log \frac{(\frac{1}{C}\log \frac{1}{\varepsilon})^2 }{(\log\log t)^{2\alpha/3}})^{2\alpha/3}}\leq \frac{C(\log \frac{1}{\varepsilon})^2}{(\log\log \frac{1}{\varepsilon})^{\frac{2\alpha}{3}}}.
$$
With \eqref{2.44-w-10-1}, the above yields
\begin{align}\label{equ-coro-ana2-4}
 Z< X\exp\left\{C\frac{(\log \frac{1}{\varepsilon})^2 }{(\log\log \frac{1}{\varepsilon})^{2\alpha/3}} \right\} .
\end{align}
Now, by \eqref{equ-coro-ana2-4} and $Y\leq Z$, we have
\begin{align}\label{equ-coro-ana2-5}
Y\leq \varepsilon Z+ X\exp\left\{C\frac{(\log \frac{1}{\varepsilon})^2 }{(\log\log \frac{1}{\varepsilon})^{2\alpha/3}} \right\}.
\end{align}

{\it Case 2:}   $\exp\left\{-C \sqrt{ \log\frac{Z}{X}(\log\log \frac{Z}{X} )^{2\alpha/3}}\right\}\leq \varepsilon$, \eqref{equ-coro-ana2-5} follows immediately from \eqref{equ-coro-ana2-3}.

Therefore the proof is complete.
\end{proof}

\section{Spectral inequality and uncertainty principle  in $1$-dim}\label{section-thm-1d}
In this section, we  first build up a spectral inequality and a quantitative uncertainty principle
at the scale of $c_{h_\alpha}$ for $1$-dim case,
then  we prove
 Theorem \ref{thm-ob-bound} and Theorem \ref{thm-ob-R}.

\subsection{Spectral inequality in the case of $1$-dim}\label{subsect 4.1-w-10-25}\

Let $L>0$. Define the operator  $A:=-\partial_x^2$ with  domain $D(A)=H^2(0, L)\cap H^1_0(0, L)$.
The eigenvalues and eigenfunctions of  $A$ are given by
 $$
\lambda_k=\Big(\frac{k\pi}{L}\Big)^2;\;\;  \; \phi_k(x)=\sin \frac{k\pi x}{L}, \;x\in[0,L],\;\;\mbox{for all}\;\; k=1,2,\cdots.
$$
For each $\lambda>0$,  we define the  subspace:
$\mathcal {E}_\lambda:=\mbox{Span}\{\phi_k\; :\;  \lambda_k\leq \lambda \}$.
Note that
\begin{align}\label{3.1-w-10-2}
\lambda_k\leq \lambda\;\;\mbox{if and only if}\;\;k\leq \frac{\sqrt{\lambda}L}{\pi}.
\end{align}

The main result of this subsection is the following  spectral inequality.
\begin{lemma}\label{lem-spec}
Let $\alpha>0$.
There is  a  constant $C>0$, depending only on $\alpha$, such that for each subset  $E\subset [0,L]$  with $c_{h_\alpha}(E)>0$ and each $\lambda>0$,
\begin{align}\label{3.2-w-10-2}
\sup_{x\in [0,L]}|\phi(x)|\leq e^{C(L+1)^2\sqrt{\lambda}}\exp\left\{\frac{C(L+1)^2\lambda(\log  ( \lambda+e) )^{-\frac{2\alpha}{3}}}{c^2_{h_\alpha}(E)}\right\}\sup_{x\in E}|\phi(x)|, \;\forall\;\phi\in \mathcal {E}_\lambda.
\end{align}
\end{lemma}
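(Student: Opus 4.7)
The plan is to reduce the spectral inequality to the propagation-of-smallness estimate in Corollary~\ref{cor-inter-820}, exploiting the entire extension of $\phi\in\mathcal{E}_\lambda$. Three ingredients will be combined: a growth bound for the entire extension of $\phi$, a pigeonhole argument that isolates a good unit subinterval of $[0,L]$ carrying a controlled fraction of $c_{h_\alpha}(E)$, and a chain propagation to recover $\sup_{[0,L]}|\phi|$ from $\sup_{I^\star}|\phi|$.

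By \eqref{3.1-w-10-2} one can write $\phi(x)=\sum_{k\le N}a_k\sin(k\pi x/L)$ with $N\le \sqrt{\lambda}L/\pi$, which extends to an entire function. The pointwise bound $|\sin(k\pi z/L)|\le e^{k\pi|\mathrm{Im}\,z|/L}$, combined with Cauchy--Schwarz and the Parseval identity $\|\phi\|_{L^2(0,L)}^2=(L/2)\sum_k|a_k|^2$, yields
$$
\sup_{D_R(x_0)}|\phi|\ \le\ C\lambda^{1/4}\sqrt{L}\,e^{R\sqrt{\lambda}}\sup_{x\in[0,L]}|\phi(x)|,\qquad x_0\in\mathbb{R},\ R>0,
$$
where $D_R(x_0)$ is the disk of radius $R$ centered at $x_0$. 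In particular, for every unit subinterval $I\subset[0,L]$, the ratio $M=\sup_{D_5(x_0)}|\phi|/\sup_{I}|\phi|$ appearing in Corollary~\ref{cor-inter-820} is bounded by $C\lambda^{1/4}\sqrt{L}\,e^{5\sqrt{\lambda}}\,\sup_{[0,L]}|\phi|/\sup_{I}|\phi|$.

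Covering $[0,L]$ by $\lceil L\rceil+1$ closed unit intervals and using countable subadditivity of $c_{h_\alpha}$, I find a unit interval $I^\star$ with $c_{h_\alpha}(E\cap I^\star)\ge c_{h_\alpha}(E)/(L+1)$. After translating the center $x^\star$ of $I^\star$ to the origin and applying Corollary~\ref{cor-inter-820} to $\phi$ and $E\cap I^\star$, one obtains for every $\varepsilon\in(0,e^{-3}]$
$$
\sup_{I^\star}|\phi|\ \le\ \varepsilon\sup_{D_5(x^\star)}|\phi|+\exp\!\left\{\frac{C(L+1)^2(\log\tfrac{1}{\varepsilon})^2}{c_{h_\alpha}^2(E)\,(\log\log\tfrac{1}{\varepsilon})^{2\alpha/3}}\right\}\sup_{E}|\phi|.
$$
To upgrade $\sup_{I^\star}|\phi|$ to $\sup_{[0,L]}|\phi|$, I iterate the same corollary along a chain of $O(L+1)$ overlapping unit intervals linking $I^\star$ to a point maximizing $|\phi|$ on $[0,L]$, using at each link the half-unit overlap with the preceding interval (a subinterval of explicit, positive $c_{h_\alpha}$-content) as the ``known'' set. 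Choosing $\varepsilon$ so that $\log(1/\varepsilon)\asymp (L+1)\sqrt{\lambda}$ and invoking the growth bound of the previous step makes the $\varepsilon\sup_{D_5}|\phi|$-terms at each link absorbable into $\sup_{[0,L]}|\phi|$ on the left-hand side; the remaining constants consolidate into $e^{C(L+1)^2\sqrt{\lambda}}\exp\{C(L+1)^2\lambda(\log(\lambda+e))^{-2\alpha/3}/c_{h_\alpha}^2(E)\}$, which is exactly \eqref{3.2-w-10-2}.

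The main obstacle will be the chain propagation: one must verify that the product of Corollary~\ref{cor-inter-820} factors over $O(L+1)$ links consolidates so that the dependence on $L+1$ enters as the quadratic $(L+1)^2$ in the exponent, rather than exponentially, and that the absorption of the $\varepsilon\sup_{D_5}|\phi|$-terms against the entire-extension bound $e^{C\sqrt{\lambda}|\mathrm{Im}\,z|}$ is compatible simultaneously for every link. This forces the specific balance $\log(1/\varepsilon)\asymp (L+1)\sqrt{\lambda}$, which in turn dictates the appearance of $(\log(\lambda+e))^{-2\alpha/3}$ via $\log\log(1/\varepsilon)\asymp \log(\lambda+e)$.
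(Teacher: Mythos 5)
Your overall architecture (entire extension with the exponential-type bound, pigeonholing a unit subinterval $I^\star$ carrying at least $c_{h_\alpha}(E)/(L+1)$ of the content, then a one-dimensional propagation of smallness on $I^\star$) matches the paper's up to the last step, but the chain propagation you propose to pass from $I^\star$ to $[0,L]$ does not work, and this is exactly the step you flag as the obstacle. Concretely: iterating the additive inequality of Corollary~\ref{cor-inter-820} over $m\asymp L$ overlapping links produces, after telescoping, an error term of size $\varepsilon K^m\sup_{D_5}|\phi|$ with $\log K\gtrsim (\log\tfrac1\varepsilon)^2(\log\log\tfrac1\varepsilon)^{-2\alpha/3}$. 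Absorbing this into $\sup_{[0,L]}|\phi|$ would require $\log\tfrac1\varepsilon\gtrsim m\log K\gtrsim m\,(\log\tfrac1\varepsilon)^{2-}$, which is impossible for large $\lambda$ once $m\geq 2$; with your specific choice $\log\tfrac1\varepsilon\asymp(L+1)\sqrt{\lambda}$ one gets $\varepsilon K^m=\exp\{-(L+1)\sqrt{\lambda}+C(L+1)^3\lambda(\cdots)^{-2\alpha/3}\}\gg 1$, so the $\varepsilon$-terms are not absorbable at all. Even if you replace the additive form by a multiplicative two-constants estimate on the half-unit overlaps, chaining over $O(L)$ links compounds the exponents geometrically and yields a constant that is \emph{exponential} in $L$, not the quadratic $(L+1)^2$ claimed in \eqref{3.2-w-10-2}.

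The paper avoids chaining altogether: after selecting the good subinterval $I_{j_0}$ it invokes the Nazarov--Tur\'an estimate for exponential polynomials (Lemma~\ref{lemma3.2-w-10-2}), which exploits that $\phi\in\mathcal{E}_\lambda$ is a sum of at most $\tfrac{2L\sqrt{\lambda}}{\pi}$ purely imaginary exponentials and gives, in a single step, $\sup_{[0,L]}|\phi|\leq (Cm_0)^{2L\sqrt{\lambda}/\pi}\sup_{I_{j_0}}|\phi|\leq e^{C(L+1)^2\sqrt{\lambda}}\sup_{I_{j_0}}|\phi|$; only then is the one-dimensional propagation of smallness (Lemma~\ref{lem-ana}, in its multiplicative form with the normalization $|\phi(x_0)|\geq 1$ coming from $\|\phi\|_{L^2}=1$) applied on $I_{j_0}$. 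To repair your argument you should replace the chain by this Nazarov--Tur\'an step, or otherwise exploit the band-limited structure of $\phi$ globally on $[0,L]$; a purely local chain of analytic-continuation estimates cannot produce the polynomial-in-$L$ constant.
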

The proof of Lemma \ref{lem-spec} is based on  Lemma \ref{lem-ana} and the following  lemma.
\begin{lemma}[Nazarov-Turan \cite{Nazarov1993local}]\label{lemma3.2-w-10-2}
 Let $p(x)=\sum_{k=1}^nc_ke^{\mu_kx}$ (with $ (c_k\in \C, \mu_k\in \C$) be an exponential polynomial of order $n$. Let $I$ be an interval. Then there is an absolute constant $C>0$ such that
 when  $E\subset I$ is a  subset  with $|E|>0$,
 $$
 \sup_{x\in I}|p(x)|\leq e^{|I|\max| \emph{Re }  \mu_k|} \left(\frac{C|I|}{|E|} \right)^{n-1}
 \sup_{x\in E}|p(x)|.
 $$
\end{lemma}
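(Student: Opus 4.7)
The plan is to follow the classical Tur\'an--Nazarov two-stage strategy: first normalize to extract the exponential prefactor $e^{|I|\max|\mathrm{Re}\,\mu_k|}$, and then prove a dimension-sensitive Remez-type bound for the renormalized exponential polynomial by induction on the order $n$.

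\textbf{Normalization.} By the affine change of variable $x\mapsto a+|I|x$ where $I=[a,a+|I|]$, one reduces to the case $I=[0,1]$, rescaling the exponents to $\tilde\mu_k:=\mu_k|I|$ and the set to $\tilde E\subset[0,1]$ with $|\tilde E|=|E|/|I|$; in particular $e^{|I|\max|\mathrm{Re}\,\mu_k|}=e^{\max|\mathrm{Re}\,\tilde\mu_k|}$ and $|I|/|E|=1/|\tilde E|$. Next, pick $\mu_\ast$ with $|\mathrm{Re}\,\mu_\ast|=\max_k|\mathrm{Re}\,\tilde\mu_k|$ and set $q(x):=e^{-\mu_\ast x}p(x)$, which is again an exponential polynomial of order $n$ whose exponents $\tilde\mu_k-\mu_\ast$ have real parts of controlled sign. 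Since $|e^{\pm\mu_\ast x}|\leq e^{|\mathrm{Re}\,\mu_\ast|}$ uniformly on $[0,1]$, one has
\begin{equation*}
\frac{\sup_{[0,1]}|p|}{\sup_{\tilde E}|p|}\leq e^{2|\mathrm{Re}\,\mu_\ast|}\,\frac{\sup_{[0,1]}|q|}{\sup_{\tilde E}|q|},
\end{equation*}
which accounts for the exponential prefactor of the claimed estimate after undoing the rescaling.

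\textbf{Core inequality.} It remains to establish the absolute Remez-type bound
\begin{equation*}
\sup_{[0,1]}|q|\leq\Bigl(\frac{C}{|\tilde E|}\Bigr)^{n-1}\sup_{\tilde E}|q|
\end{equation*}
for every exponential polynomial $q=\sum_{k=1}^{n}c_k e^{\nu_k x}$, with $C$ an absolute constant. I would argue by induction on $n$. The case $n=1$ is trivial since $|q|$ is a pure exponential and therefore monotone on $[0,1]$. For the inductive step, apply the annihilator $D:=d/dx-\nu_n$, so that $Dq=\sum_{k=1}^{n-1}c_k(\nu_k-\nu_n)e^{\nu_k x}$ has order $n-1$; pick $x_0\in\tilde E$ with $|q(x_0)|$ comparable to $\sup_{\tilde E}|q|$, and use the reconstruction identity
\begin{equation*}
q(x)=e^{\nu_n(x-x_0)}q(x_0)+e^{\nu_n x}\int_{x_0}^{x}e^{-\nu_n s}(Dq)(s)\,ds
\end{equation*}
together with the inductive bound applied to $Dq$ on a slightly shrunken copy of $\tilde E$ to transfer control back to $q$. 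A parallel formulation is Vandermonde interpolation: select $n$ points $x_1<\cdots<x_n$ in $\tilde E$ with consecutive gaps at least $|\tilde E|/(2n)$, invert the linear system $q(x_j)=\sum_k c_k e^{\nu_k x_j}$ to bound $\|c\|_\infty\leq (C/|\tilde E|)^{n-1}\max_j|q(x_j)|$, and conclude via $\sup_{[0,1]}|q|\leq\sum_k|c_k|\sup_{[0,1]}|e^{\nu_k x}|$, with the exponential factor absorbed into the prefactor from the normalization step.

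\textbf{Main obstacle.} The delicate point is the uniformity of the constant $C$: it must be independent of $n$ and of the possibly widely spread exponents $\nu_k$. In the inductive approach this requires careful bookkeeping of how the admissible test subset of $\tilde E$ shrinks by a controlled fraction at each recursion step, and of the cumulative geometric loss in passing from $Dq$ back to $q$ through the integral formula. In the Vandermonde approach it requires a lower bound on $|\det(e^{\nu_k x_j})|$ in terms of $\prod_{j<\ell}(x_\ell-x_j)$ that is uniform in the complex parameters $\nu_k$; this is achieved by substituting $\zeta_k=e^{\nu_k\delta}$ (with $\delta$ the common gap scale) and reducing to the classical polynomial Vandermonde, then checking that the resulting estimates do not degrade as the $\nu_k$ cluster or spread. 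Once this uniformity is secured, composing the two stages yields the stated inequality with an absolute numerical constant $C$.
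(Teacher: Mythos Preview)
The paper does not prove this lemma; it is quoted verbatim from Nazarov \cite{Nazarov1993local} and used as a black box. So there is no ``paper's proof'' to compare against, and your proposal must stand on its own.

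Your sketch has a genuine error at the formulation of the core inequality. You claim that after the normalization step it suffices to prove
\[
\sup_{[0,1]}|q|\leq\Bigl(\frac{C}{|\tilde E|}\Bigr)^{n-1}\sup_{\tilde E}|q|
\]
for \emph{every} exponential polynomial $q=\sum_{k=1}^n c_k e^{\nu_k x}$ with an absolute constant $C$. This is false already for $n=2$: take $q(x)=e^{-Nx}-e^{-N}$ (exponents $-N$ and $0$, both with nonpositive real part as your normalization would arrange) and $\tilde E=[1/2,1]$. Then $\sup_{[0,1]}|q|\asymp 1$ while $\sup_{\tilde E}|q|\asymp e^{-N/2}$, so the ratio is $e^{N/2}$, which cannot be bounded by $(2C)^1$. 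Your normalization extracts one factor of $e^{\max|\mathrm{Re}\,\tilde\mu_k|}$, but it does \emph{not} render the residual polynomial $q$ amenable to an exponent-free Remez bound: the exponents of $q$ still have real parts of size up to $2|\mathrm{Re}\,\mu_\ast|$, and the exponential prefactor must be carried through the entire argument, not factored out once at the start. Consequently your ``$n=1$ is trivial'' claim is also wrong for general $q$ (take $q(x)=e^{Nx}$), and the induction never gets off the ground.

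The genuine Nazarov argument keeps the dependence on $\max|\mathrm{Re}\,\mu_k|$ inside the induction and controls it via a more delicate mechanism: roughly, one studies the distribution function of $\log|p|$ and shows that the set $\{|p|\leq \epsilon\sup|p|\}$ has measure $O(n|I|)\cdot$(something logarithmic in $\epsilon$), using that $p$ solves a constant-coefficient linear ODE of order $n$ and hence has at most $n-1$ real zeros on any interval, together with a careful interpolation between consecutive zeros. Your inductive reconstruction formula is in the right spirit, but the step ``use the inductive bound applied to $Dq$ on a slightly shrunken copy of $\tilde E$ to transfer control back to $q$'' hides exactly the difficulty: you need to bound $\sup_{\tilde E}|Dq|$ in terms of $\sup_{\tilde E}|q|$, and no such bound exists without reintroducing the exponential factor. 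Similarly, the Vandermonde route requires a lower bound on $|\det(e^{\nu_k x_j})|$ uniform in the complex $\nu_k$; your proposed substitution $\zeta_k=e^{\nu_k\delta}$ does not achieve this, since nearby $\nu_k$ produce nearby $\zeta_k$ and the determinant degenerates.
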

\vskip 5pt

\begin{proof}[Proof of Lemma \ref{lem-spec}]
We arbitrarily fix $\alpha>0$ and simply write $h$ for $h_\alpha$.
Then we arbitrarily fix $E\subset [0,L]$  with $c_{h_\alpha}(E)>0$.
If $\lambda>0$ is such that $\mathcal {E}_\lambda=\emptyset$, then \eqref{3.2-w-10-2} holds trivially.
So we arbitrarily fix $\lambda>0$ so that  $\mathcal {E}_\lambda\neq\emptyset$.

To  apply  Lemma \ref{lem-ana}, we need to  reduce  $E$ to a set which is  contained in a unit interval  while maintaining positive $h$-Hausdorff content. However,  scaling $E$ by $L^{-1}E$
is not feasible due to the lack of  homogeneity in $c_{h}$. Hence, we prove  \eqref{3.2-w-10-2} by considering two scenarios: $L>1$ and $L\leq 1$.

We start with the first case that  $L\leq 1$.
Let $\phi\in \mathcal {E}_\lambda$. Then there is a sequence  $\{c_k\}$ ($c_k\in\R$ for all $k$) such that
\begin{align}\label{3.3-w-10-2}
    \phi(x)=\sum_{\lambda_k\leq \lambda} c_k\sin\frac{k\pi x}{L},\;x\in[0,L].
\end{align}
Without loss of generality, we can assume
 that $\|\phi\|_{L^2(0,L)}=1$, i.e.,
\begin{align}\label{3.4-w-10-2}
\sum_{\lambda_k\leq \lambda}|c_k|^2=1.
\end{align}
Then there is an $x_0\in(0,L)$ such that
\begin{align}\label{3.5-w-10-2}
|\phi(x_0)|\geq 1.
\end{align}
Meanwhile, direct computations show that
\begin{align*}
\phi^{(m)}(x)=\sum_{\lambda_k\leq \lambda} c_k\Big(\frac{k\pi}{L}\Big)^m\sin\Big(\frac{k\pi x}{L}+\frac{m\pi}{2}\Big),\;\;m=0,1,2,\dots.
\end{align*}
With the Cauchy-Schwarz inequality and \eqref{3.4-w-10-2}, the above leads to
\begin{align}\label{3.6-w-10-2}
\max_{x\in[0,L]}|\phi^{(m)}(x)|\leq \sum_{\lambda_k\leq \lambda}|c_k|\Big(\frac{k\pi}{L}\Big)^m\leq \lambda^{\frac{m}{2}}\sum_{\lambda_k\leq \lambda}|c_k|\leq \sqrt{\frac{L\lambda^{1/2}}{\pi}}\lambda^{\frac{m}{2}},\;\;m=0,1,2,\dots.
\end{align}
 Thus $\phi$ can be extended to an entire function of exponential type. Moreover, it follows from  the Taylor formula, as well as  \eqref{3.6-w-10-2} that  for each $z\in\C$,
\begin{align}\label{3.6-1-w-10-2}
|\phi(z)|\leq \sum_{m=0}^\infty \Big|\frac{\phi^{(m)}(0)}{m!}z^m\Big|\leq \sum_{m=0}^\infty \sqrt{\frac{L\lambda^{1/2}}{\pi}}\lambda^{\frac{m}{2}} \frac{|z|^m}{m!}=\sqrt{\frac{L\lambda^{1/2}}{\pi}}e^{\sqrt{\lambda}|z|}.
\end{align}

Now we let  $\phi_{x_0}(z)=\phi(z+x_0)$, with $z\in\C$. It is clear that  $\phi_{x_0}$ is an entire function with $|\phi_{x_0}(0)|\geq 1$ (see \eqref{3.5-w-10-2}). Since $c_h$ is translation invariant, it follows from Lemma \ref{lem-ana}, as well as \eqref{3.6-1-w-10-2} that
$$
\sup_{x\in [0,1]-x_0}|\phi_{x_0}(x)|\leq M^9\exp\Big\{\frac{C (\log M)^2(\log ( \frac{\log M}{\log 2}+e) )^{-\frac{2\alpha}{3}}}{c^2_h(E)}\Big\}\sup_{x\in E-x_0}|\phi_{x_0}(x)|
$$
where
$$
M=\sup_{z\in D_4}|\phi_{x_0}(z)|\leq \sqrt{\frac{L\lambda^{1/2}}{\pi}}e^{\sqrt{\lambda}(4+L)}<e^{(4+2L)\sqrt{\lambda}}.
$$
Since $L\leq 1$, the above yields
\begin{align*}
&\sup_{x\in [0,L]}|\phi(x)|\leq \sup_{x\in [0,1]}|\phi_{x_0}(x)|\\
&\leq e^{18(L+2)\sqrt{\lambda}}
\exp\Big\{\frac{16C(L+2)^2\lambda(\log (\frac{(4+2L)\sqrt{\lambda}}{\log 2}+e))^{-\frac{2\alpha}{3}}}{c^2_h(E)}\Big\}\sup_{x\in E}|\phi(x)|\\
&\leq e^{54\sqrt{\lambda}}\exp\Big\{\frac{C\lambda(\log  (\lambda+2))^{-\frac{2\alpha}{3}}}{c^2_h(E)}\Big\}\sup_{x\in E}|\phi(x)|
\end{align*}
{with a different constant $C$,}
which leads to \eqref{3.2-w-10-2} for the case that $L\leq 1$.

Next, we consider the second case that  $L>1$. Let $m_0$ be the smallest integer such that $L/m_0\in(0,1]$. Then $L\leq m_0<L+1$.
We divide equally  $[0,L]$  into $m_0$ sub-intervals $I_1,I_2,\dots,I_{m_0}$ (with $|I_j|=L/m_0$ for all $j$).
Then by  the subadditivity of $c_h$, we have
$$
c_h(E)\leq \sum_{j=1}^{m_0}c_h(E\cap I_j).
$$
Thus there is a $j_0\in \{1,2,\cdots,m_0\}$ such that
\begin{align}\label{equ-519-1}
    c_h(E\cap I_{j_0})\geq \frac{c_h(E)}{m_0}>0.
\end{align}
At the same time, by Euler's identity, we obtain from \eqref{3.3-w-10-2} that
$$
\phi(x)=\sum_{\lambda_k\leq \lambda}\frac{c_k}{2}\Big(e^{\frac{ik\pi x}{L}}-e^{\frac{-ik\pi x}{L}}\Big).
$$
This is an exponential polynomial with the pure imaginary part.
By \eqref{3.1-w-10-2}, its order
 $N:=2\#\{k:\lambda_k\leq \lambda\}$ satisfies  $N\leq \frac{2L\sqrt{\lambda}}{\pi}$.
Then, according to Lemma \ref{lemma3.2-w-10-2}, we have
\begin{align}\label{3.9-w-10-2}
\sup_{x\in[0,L]}|\phi(x)|\leq (Cm_0)^{\frac{2L\sqrt{\lambda}}{\pi}}\sup_{x\in I_{j_0}}|\phi(x)|.
\end{align}
Since $|I_{j_0}|\leq 1$,  we can use the result obtained in the first case, as well as \eqref{equ-519-1}, to derive
\begin{align}\label{3.10-w-10-2}
\sup_{x\in I_{j_0}}|\phi(x)| \leq e^{54\sqrt{\lambda}}
\exp\Big\{\frac{Cm^2_0\lambda(\log  (\lambda+e))^{-\frac{2\alpha}{3}}}{c^2_h(E)}\Big\}\sup_{x\in E\cap I_{j_0}}|\phi(x)|.
\end{align}
By \eqref{3.9-w-10-2} and \eqref{3.10-w-10-2},  we obtain
$$
\sup_{x\in[0,L]}|\phi(x)|\leq (Cm_0)^{\frac{2L\sqrt{\lambda}}{\pi}}
e^{54\sqrt{\lambda}}\exp\Big\{\frac{Cm^2_0\lambda(\log  (\lambda+e) )^{-\frac{2\alpha}{3}}}{c^2_h(E)}\Big\}\sup_{x\in E}|\phi(x)|,
$$
which leads to \eqref{3.2-w-10-2} for the case that $L>1$.

Hence, we  complete the proof of Lemma \ref{lem-spec}
\end{proof}

\subsection{Uncertainty principle at scale of $h_\alpha$}\

Typically, in the study of  observability inequalities for the heat equation on the whole space,
the uncertainty principle (attributed to  Logvinenko-Sereda) serves a role analogous to the spectral inequality (see  \cite{Egidisharp2018,Wang2019observable}).
However, at the scale of
$c_{h_\alpha}$, integrals may not  exist. Therefore, to investigate the uncertainty principle at this specific scale,   an alternative working space to $L^2(\R)$ is required. We choose  $l^2L^\infty(\mathbb{R})$,
 defined as follows:
\begin{definition}\label{definition4.1-w-10-4}
Let $l^2L^\infty(\R)$
be the Banach space endowed with the norm
\begin{align}\label{4.12-w-10-21}
\|u\|_{l^2L^\infty(\R)}:=\Big\|\{\|u\|_{L^\infty(k,k+1)}\big\}_{k\in\Z}\Big\|_{l^2(\Z)}.
\end{align}
We abbreviate it to $l^2L^\infty$ when there is no risk to cause any confusion.
\end{definition}
\begin{remark}\label{remark4.2-w-10-4}
$(i)$ Similarly, we can define the space $l^qL^p(\R)$ with $1\leq p, q\leq \infty$, as well as  the space $l^qL^p(\R^d)$.

$(ii)$ For properties of  $l^qL^p$ spaces, we refer readers to \cite{FournierAmalgamsLp1985}.
  \end{remark}

The main result of this subsection is the following uncertainty principle at the scale of $c_{h_\alpha}$. (Here, we recall
Definition \ref{thich-set-9-21-w} for
the concept of $h_\alpha$-$(\gamma,L)$ sets.)
\begin{lemma}\label{lem-LS}
 Let $\alpha>0$. There is a constants $C>0$, depending only on $\alpha$, such that when $E$ is an  $h_\alpha$-$(\gamma, L)$ thick set in $\R$
 for some $L,\gamma>0$,
\begin{align}\label{4.1-w-10-4}
\|u\|_{l^2L^\infty}\leq (4C(  L+1))^{CLN}e^{CN}
\exp\Big\{\frac{CN^2(\log (\frac{CN}{\log 2}+e))^{-\frac{2\alpha}{3}}}{\gamma^2}\Big\}
\Big\|\big\{\sup_{E\cap [k,k+  L]}|u|\big\}_{k\in\Z}\Big\|_{l^2(\Z)}
\end{align}
for each  $u\in l^2L^\infty$ with  $\emph{supp } \widehat{u}\subset [-N,N]$ for some $N>0$. (Here $\widehat{u}$
is the Fourier transform of $u$.)
\end{lemma}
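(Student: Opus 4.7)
The plan is to adapt Kovrijkine's scheme for the Logvinenko--Sereda principle to the amalgam space $l^2L^\infty$, substituting the classical Remez inequality with the log-type propagation of smallness (Corollary~\ref{cor-inter-820}) already built in Section~\ref{section 2}. Since $u\in l^2L^\infty$ implies $u\in L^\infty(\R)$ with $\|u\|_{L^\infty(\R)}\leq \|u\|_{l^2L^\infty}$, the hypothesis $\mbox{supp}\,\widehat u\subset[-N,N]$ together with Phragm\'en--Lindel\"of allows me to treat $u$ as an entire function of exponential type $N$; a standard local Plancherel--P\'olya type estimate will yield, for every $y_0\in\R$ and an absolute constant $c_0>0$,
\[
\sup_{|z-y_0|\leq 5}|u(z)|\leq c_0\,e^{c_0 N}\sup_{|s-y_0|\leq c_0}|u(s)|.
\]

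For each $k\in\Z$ set $J_k:=[k,k+L]$ (WLOG $L\geq 1$, else adjust the partition). Subdivide $J_k$ into $\lceil L\rceil$ closed unit subintervals; by subadditivity of $c_{h_\alpha}$ and the $h_\alpha$-$(\gamma,L)$ thickness of $E$, at least one subinterval $I_k\subset J_k$ (with centre $y_k$) satisfies $c_{h_\alpha}(E\cap I_k)\geq \gamma L/\lceil L\rceil\geq \gamma/2$. Applying Corollary~\ref{cor-inter-820} on $I_k$ with parameter $\varepsilon\in(0,e^{-3}]$ (to be chosen) yields
\[
\sup_{I_k}|u|\leq \varepsilon\sup_{|z-y_k|\leq 5}|u(z)|+\exp\!\Big\{C\frac{(\log\tfrac{1}{\varepsilon})^2}{(\log\log\tfrac{1}{\varepsilon})^{2\alpha/3}}\Big\}\sup_{E\cap I_k}|u|,
\]
with $C$ depending only on $\alpha$ and $\gamma$. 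To pass from the unit scale $I_k$ back to the length-$L$ interval $J_k\supset[k,k+1]$, I will discretise $u$ on $J_k$ as an exponential polynomial of order $\sim LN$ (using the band-limited structure and a Shannon-type sampling) and invoke the Nazarov--Tur\'an Lemma~\ref{lemma3.2-w-10-2}, producing a factor of the form $(C(L+1))^{CLN}e^{CN}$:
\[
\sup_{[k,k+1]}|u|\leq \sup_{J_k}|u|\leq (C(L+1))^{CLN}e^{CN}\sup_{I_k}|u|.
\]

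Chaining the three displays, squaring, summing over $k\in\Z$, and using the fact that each real point lies in only $O(L+c_0)$ of the neighbourhoods $[y_k-c_0,y_k+c_0]$ (so $\sum_k(\sup_{|s-y_k|\leq c_0}|u(s)|)^2\lesssim (L+1)\|u\|^2_{l^2L^\infty}$), I arrive at an inequality of the form
\[
\|u\|^2_{l^2L^\infty}\leq A(\varepsilon,L,N)\,\|u\|^2_{l^2L^\infty}+B(\varepsilon,L,N,\gamma)\,\Big\|\big\{\sup_{E\cap J_k}|u|\big\}_{k\in\Z}\Big\|^2_{l^2(\Z)}.
\]
Choosing $\varepsilon$ so small that $A(\varepsilon,L,N)\leq 1/2$ absorbs the first term; this forces $\log(1/\varepsilon)$ to be of order $N+LN\log(L+1)$. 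Substituting this $\varepsilon$ back into the exponential factor in Step~3 and rearranging produces the precise form $\exp\{CN^2(\log(CN/\log 2+e))^{-2\alpha/3}/\gamma^2\}$ appearing in \eqref{4.1-w-10-4}. The main obstacle is exactly this quantitative balancing: the scaling of $\varepsilon$ dictated by absorption must be matched against the log-type loss in Corollary~\ref{cor-inter-820} so that the final constant has the sharp form stated in the lemma; a secondary technical point is verifying the local Plancherel--P\'olya estimate in Step~1 with the claimed dependence $c_0 e^{c_0 N}$, which is standard but must be done carefully to keep the final $e^{CN}$ factor.
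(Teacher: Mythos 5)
Your proposal breaks down at Step 1. The ``local Plancherél--Pólya type estimate''
\[
\sup_{|z-y_0|\leq 5}|u(z)|\leq c_0\,e^{c_0 N}\sup_{|s-y_0|\leq c_0}|u(s)|
\]
is \emph{false} for general band-limited $u$, no matter how the absolute constants are chosen. Take
$u(x)=\prod_{j=1}^{n}\sin\bigl(\varepsilon(x-t_j)\bigr)$ with $t_j$ the Chebyshev nodes of $[y_0-c_0,y_0+c_0]$ and $n=N/\varepsilon$: this has exponential type $N$, is bounded by $1$ on $\R$ (multiply by $\mathrm{sinc}^2(\delta(x-y_0))$ with $\delta$ tiny to place it in $l^2L^\infty$ without affecting anything below), yet
$\sup_{[y_0-c_0,y_0+c_0]}|u|\leq 2(c_0\varepsilon/2)^n$ while $|u(y_0+5i)|\geq \sinh(5\varepsilon)^n\geq(5\varepsilon)^n$, so the ratio is at least $(10/c_0)^{n}\cdot 2^{-1}\geq 2^{n-1}$ when $c_0\le 5$ (and a similar computation gives $\gtrsim 2^{n}$ in general), which blows up as $\varepsilon\to 0$ with $N$ fixed. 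The point is that a band-limited function can have $\gg N$ near-zeros clustered on a bounded interval, and then its complex extension is not controlled by its local real supremum. This is precisely the obstruction that Kovrijkine's good/bad interval decomposition -- which the paper's proof uses and your proposal omits -- is designed to handle: one declares $I_k$ \emph{good} if $\|u^{(m)}\|_{L^\infty(I_k)}\leq(AN)^m\|u\|_{L^\infty(I_k)}$ for all $m$, in which case the Taylor expansion gives exactly your Step-1 bound \emph{on that interval}, and one shows via the $l^2L^\infty$ Bernstein inequality (Lemma \ref{lem-bern}) that the bad intervals carry at most half of $\|u\|^2_{l^2L^\infty}$ and can be discarded. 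Without this mechanism your chain of inequalities has no valid starting point.

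Two secondary problems. First, your passage from the unit interval $I_k$ back to $J_k$ via Nazarov--Tur\'an is not justified: a general band-limited $u$ is not an exponential polynomial of finite order, and the proposed ``Shannon-type sampling'' discretisation is not an argument; the paper instead uses the analytic Remez inequality \eqref{2.43-w-10-3}, which applies once the good-interval derivative bounds give $M\leq e^{CLN}$. Second, your $\varepsilon$-absorption scheme forces $\log(1/\varepsilon)\sim CN+CLN\log(L+1)$, so the factor from Corollary \ref{cor-inter-820} becomes $\exp\{CL^2N^2\log^2(L+1)(\cdots)^{-2\alpha/3}/\gamma^2\}$, which does not match the stated bound, where the $N^2$ term carries no $L$-dependence. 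The paper avoids this entirely: the absorption is achieved once and for all by the good/bad split (a fixed factor $1/2$), and Lemma \ref{lem-ana} is applied directly with $M\leq e^{4AN}$, yielding the constant in \eqref{4.1-w-10-4} without any balancing of $\varepsilon$ against $N$ and $L$.
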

\begin{remark}
    The estimate \eqref{4.1-w-10-4} remains valid when  when $l^2L^\infty$ (on the LHS of \eqref{4.1-w-10-4}) is replaced by $l^qL^\infty$   and $l^2(\Z)$ (on the RHS of \eqref{4.1-w-10-4}) is replaced by $l^q(\Z)$  with $1\leq q\leq\infty$. This can be proved using the same method to the one in the proof of Lemma \ref{lem-LS}.
   \end{remark}

Before proving Lemma \ref{lem-LS}, we establish the following Bernstein-type inequality:

\begin{lemma}\label{lem-bern}
Let $1\leq p,q\leq \infty$. Then there exists a numerical constant $C>0$ such that
when $u\in l^qL^p$ with  $\emph{supp } \widehat{u}\subset [-N,N]$ for some $N>0$,
\begin{align}\label{4.2-w-10-4}
\|\partial_x^mu\|_{l^qL^p}\leq (CN)^m\|u\|_{l^qL^p}\;\;\mbox{for all}\;\;m=0,1,2,\dots.
\end{align}
\end{lemma}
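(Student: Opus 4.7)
The plan is to realize the spectral constraint as a convolution identity and then reduce the Bernstein inequality on the amalgam space to a Young-type convolution bound. Fix once and for all a Schwartz function $\psi\in\mathcal{S}(\R)$ with $\widehat{\psi}\equiv 1$ on $[-1,1]$, and set $\psi_N(x):=N\psi(Nx)$, so that $\widehat{\psi_N}\equiv 1$ on $[-N,N]$. Since $\mathrm{supp}\,\widehat{u}\subset[-N,N]$, the tempered distribution identity $u=\psi_N*u$ holds, whence $\partial_x u=(\partial_x\psi_N)*u$. A change of variables gives $\|\partial_x\psi_N\|_{L^1(\R)}=N\|\psi'\|_{L^1(\R)}$.

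The core step is the amalgam Young inequality
\begin{equation*}
\|K*v\|_{l^qL^p}\leq 3\,\|K\|_{L^1(\R)}\,\|v\|_{l^qL^p}
\end{equation*}
for every $K\in L^1(\R)$ and $v\in l^qL^p$. For each $k\in\Z$, Minkowski's integral inequality yields
\begin{equation*}
\|K*v\|_{L^p(k,k+1)}\leq\int_\R |K(y)|\,\|v\|_{L^p(k-y,k-y+1)}\,\d y,
\end{equation*}
and for each $y$ the shifted local norm $\|v\|_{L^p(k-y,k-y+1)}$ is majorized by the sum of the two adjacent unit cell norms $\|v\|_{L^p(j,j+1)}+\|v\|_{L^p(j+1,j+2)}$, where $j=\lfloor k-y\rfloor$. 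Breaking the $y$-integral into unit cells $[j,j+1]$ converts the estimate into a discrete convolution $a_k\leq\sum_{j\in\Z}b_j(c_{k-j-1}+c_{k-j})$ with $b_j=\int_j^{j+1}|K|$ and $c_k=\|v\|_{L^p(k,k+1)}$. Since $\|b\|_{l^1(\Z)}=\|K\|_{L^1(\R)}$ and $\|c\|_{l^q(\Z)}=\|v\|_{l^qL^p}$, the standard discrete Young inequality on $\Z$ closes the bound.

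Combining these two ingredients with $m=1$ yields $\|\partial_x u\|_{l^qL^p}\leq 3N\|\psi'\|_{L^1}\,\|u\|_{l^qL^p}=:CN\|u\|_{l^qL^p}$ for every $u$ whose Fourier transform is supported in $[-N,N]$. Because $\partial_x^{m-1}u$ retains the same spectral support, iterating $m$ times produces the desired $(CN)^m$ bound with the same constant $C$. The only genuinely delicate point is the amalgam Young inequality, and specifically the treatment of the fractional overlap between the translated interval $[k-y,k-y+1]$ and the integer lattice; the two-cell majorization above absorbs this overlap at the cost of a harmless multiplicative constant, so no real obstacle remains.
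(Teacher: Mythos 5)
Your proposal is correct and follows essentially the same route as the paper: both realize $u$ as the convolution of $u$ with a rescaled Schwartz reproducing kernel whose Fourier transform is $1$ on $[-N,N]$, transfer the derivative to the kernel at the cost of a factor $N$ in $L^1$, and close with a Young-type convolution inequality on $l^qL^p$ obtained by splitting into unit cells and applying discrete Young on $\Z$. The only cosmetic difference is that the paper first dominates the differentiated kernel pointwise by $CN^2(1+N|x|)^{-2}$ before integrating, whereas you work directly with $\|\partial_x\psi_N\|_{L^1}$.
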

\begin{proof}
We arbitrarily fix  $u\in l^qL^p$  with $\emph{supp } \widehat{u}\subset [-N,N]$ for some $N>0$.
 Let $\varphi$ be a Schwartz function such that
 $$
 \widehat{\varphi}(\xi)=1 \mbox{ for } |\xi|\leq 1, \quad \widehat{\varphi}(\xi)=1 \mbox{ for } |\xi|\geq 2.
 $$
 It is clear that  $\widehat{\varphi}(\frac{\xi}{N})=1$ for  $\xi\in [-N,N]$. Then by Fourier's inversion formula, we see
 $$
 u(x)=(2\pi)^{-1}\int_{\R} N\varphi(N(x-y))u(y)\d y.
 $$
Since  $\varphi$ is a Schwartz function, taking the derivative in the above leads to
\begin{align}\label{equ-bern-1}
    |\partial_xu(x)|\leq (\Psi*|u|)(x) =\int_\R \Psi(x-y)|u(y)|\d y,\; x\in \R,
\end{align}
where $\Psi(x)=CN^2(1+N|x|)^{-2}$ ($x\in\R$) for some numerical constant $C>0$.

Meanwhile, by  Young's inequality in the space  $L^p$, we have that for each  $k\in \Z$,
\begin{align*}
 \|\Psi*|u|\|_{L^p(k,k+1)}&\leq \Big \|\sum_{l\in \Z}\int_{[l,l+1]}\Psi(x-y)|u(y)|\d y\Big\|_{L^p(k,k+1)}\\
 &\leq \sum_{l\in \Z} \|u\|_{L^p(l,l+1)}\|\Psi\|_{L^1(k-l-1,k-l+1)}.
\end{align*}
Then by Young's inequality in  space  $l^q$, we obtain
\begin{align*}
 \|\Psi*|u|\|_{l^qL^p} \leq  2\|u\|_{l^qL^p}\|\Psi\|_{L^1(\R)}\leq 2\pi CN\|u\|_{l^qL^p}.
\end{align*}
Combining this with   \eqref{equ-bern-1}, we derive
\begin{align*}
 \|\partial_xu\|_{l^qL^p}  \leq C'N\|u\|_{l^qL^p},\;\;\mbox{with}\;\;C'=2\pi C.
\end{align*}
Iterating this result leads to  \eqref{4.2-w-10-4}. This completes the proof of Lemma \ref{lem-bern}.
\end{proof}

\begin{proof}[Proof of Lemma \ref {lem-LS}]
We arbitrarily fix $\alpha>0$ and simply write $h$ for $h_\alpha$. Then we arbitrarily fix $E$ and $u$ with conditions in the lemma.

 We adopt the idea in  \cite{Kovrijkine2001some}. To this end, we write
  \begin{align*}
 \R=\bigcup_{k\in \Z} I_k,\;\;\mbox{with}\;\; I_k:=  [k,k+1].
 \end{align*}
 We say that $I_k$ is a good interval, if
 \begin{align}\label{equ-good}
  \|u^{(m)}\|_{L^\infty(I_k)}\leq (AN)^m\|u\|_{L^\infty(I_k)}\;\;\mbox{for all}\;\;m\geq 0,
\end{align}
 with $A:=3^{1/2}C$, where $C$ is the constant given by  Lemma \ref{lem-bern} when $q=2$ and $p=\infty$.
 Conversely,
  $I_k$ is a bad interval, if it is not good, i.e.,
 there is an $ m\geq 1$ such that
\begin{align}\label{equ-bad}
\|u^{( m)}\|_{L^\infty(I_k)}\geq (AN)^{ m}\|u\|_{L^\infty(I_k)}.
\end{align}
  It follows from  \eqref{equ-bad},   Lemma \ref{lem-bern} and the definition  of $A$ that
  \begin{align}\label{eq-1118-h}
\sum_{\mbox{\footnotesize bad}~ I_k} \|u\|^2_{L^\infty(I_k)}
&\leq  \sum_{m\geq 1}\sum_{\mbox{\footnotesize bad}~ I_k} (AN)^{-2m}\|u^{(m)}\|^2_{L^\infty(I_k)}\\
&\leq  \sum_{m\geq 1}(AN)^{-2m}\|u^{(m)}\|^2_{l^2L^\infty} \nonumber\\
&\leq   \sum_{m\geq 1}C^{2m}A^{-2m}\|u\|^2_{l^2L^\infty}
\leq \frac{1}{2}\|u\|^2_{l^2L^\infty}.\nonumber
\end{align}
With the definition $l^2L^\infty$, the above leads to
 \begin{align}\label{4.7-w-10-4}
\sum_{\mbox{\footnotesize good}~ I_k} \|u\|^2_{L^\infty(I_k)} \geq \frac{1}{2}\|u\|^2_{l^2L^\infty}.
\end{align}

The rest of the proof is divided into two steps.
\vskip 5pt

\noindent \emph{Step 1. We prove that when $I_k$ is a good interval,
\begin{align}\label{equ-LS-6}
\sup_{x\in [k,k+1]}|u(x)|
 \leq \Xi \sup_{x\in E\cap [k-2 L,k+2 L]}|u(x)|,
\end{align}
where
\begin{align*}
\Xi= (4C(L+1))^{C(L+1)N}e^{36AN}\exp\Big\{\frac{144A^2CN^2(\log (\frac{2AN}{\log 2}+e))^{-\frac{2\alpha}{3}}}{\gamma^2}\Big\}.
\end{align*}
}
\vskip 5pt
For this purpose, we arbitrarily fix  a good interval $I_k$. Then by \eqref{equ-good}, we see that  $u$ can be extended to an entire function such that
\begin{align}\label{equ-LS-0}
|u(z-x)|\leq e^{AN|z-x|} \quad \mbox{ for all } z\in \C, x\in I_k.
\end{align}
Again, since the Hausdorff content $c_{h}$ lacks homogeneity, we will prove  \eqref{equ-LS-6} by considering
 two cases: $L\geq 1$ and $L<1$.

In the first case that  $L\geq 1$, we let  $m_0=\lceil L \rceil$, i.e., $m_0$ is the smallest integer
such that $m_0>L$. Since $E$ is $h$-$(\gamma,L)$ thick, we find
$$
c_h(E\cap[k,k+m_0])\geq \gamma L.
$$
Meanwhile, by the subadditivity of $c_h$, we have
$$
c_h(E\cap[k,k+m_0])\leq \sum_{0\leq j\leq 2m_0-1}c_h\Big(E\cap\Big[k+\frac{j}{2},k+\frac{j+1}{2}\Big]\Big).
$$
The above two estimates indicate that there is  a $j_0\in \{0,1,\cdots,2m_0-1\}$ such that
\begin{align}\label{equ-LS-1}
c_h\Big(E\cap\Big[k+\frac{j_0}{2},k+\frac{j_0+1}{2}\Big]\Big)    \geq \frac{\gamma L}{2m_0}\geq
\frac{\gamma}{3}.
\end{align}
Without loss of generality, we can assume that $u\neq 0$. Since $u$ is analytic, it does not vanish identically on
$[k+\frac{j_0}{2},k+\frac{j_0+1}{2}]$. By homogeneity, we can assume that
\begin{align}\label{4.11-w-10-4}
\max_{x\in[k+\frac{j_0}{2},k+\frac{j_0+1}{2}]}|u|=|u(x_0)|\geq 1\;\;\mbox{for some}\;\;x_0\in \Big[k+\frac{j_0}{2},k+\frac{j_0+1}{2}\Big].
\end{align}
It is clear that
$\big[k+\frac{j_0}{2},k+\frac{j_0+1}{2}\big]\subset [x_0-\frac{1}{2},x_0+\frac{1}{2}]$. This, along with \eqref{equ-LS-1}, yields
\begin{align}\label{equ-LS-3}
c_h(E\cap[x_0-1/2,x_0+1/2])    \geq\frac{\gamma}{3}.
\end{align}
Thus we  apply Lemma \ref{lem-ana}  with  $\phi(x)=u(x+x_0)$, $I=[x_0-1/2,x_0+1/2]$ and $E$  replaced by $E\cap[x_0-1/2,x_0+1/2]$
to derive
$$
\sup_{x\in [x_0-\frac{1}{2},x_0+\frac{1}{2}]}|u(x)|\leq M^9\exp\Big\{\frac{C (\log M)^2(\log (\frac{\log M}{\log 2}+e))^{-\frac{2\alpha}{3}}}{c^2_h(E\cap [x_0-\frac{1}{2},x_0+\frac{1}{2}])}\Big\}\sup_{x\in E\cap [x_0-\frac{1}{2},x_0+\frac{1}{2}]}|u(x)|,
$$
where
$$
M=\sup_{|z-x_0|\leq 4}|\phi(z)|\leq e^{4AN}.
$$
(Here we used \eqref{equ-LS-0}.) This, together with \eqref{equ-LS-3}, implies that
\begin{align}\label{equ-LS-4}
 \sup_{x\in [x_0-\frac{1}{2},x_0+\frac{1}{2}]}|u(x)|\leq e^{36AN}
 \exp\Big\{\frac{144A^2CN^2(\log \frac{2AN}{\log 2})^{-\frac{2\alpha}{3}}}{\gamma^2}\Big\}\sup_{x\in E\cap [x_0-\frac{1}{2},x_0+\frac{1}{2}]}|u(x)|.
\end{align}
On the other hand,
by \eqref{2.43-w-10-3} in Remark \ref{rem-Kov}, using a scaling argument, we have
\begin{align}\label{equ-LS-5}
\sup_{x\in [k,k+1]}|u(x)|\leq \sup_{x\in[x_0-2L,x_0+2L]}|u(x)|\leq (4CL)^{CLN} \sup_{x\in [x_0-\frac{1}{2},x_0+\frac{1}{2}]}|u(x)|  .
\end{align}
Now,  \eqref{equ-LS-6} in the case that $L\geq 1$  follows from \eqref{equ-LS-4} and \eqref{equ-LS-5} at once.

In the second case where $L<1$, it follows from Definition \ref{thich-set-9-21-w} that $E$ is $c_h$-$(\gamma,1)$ thick. Thus,
we can use  \eqref{equ-LS-6} with $L=1$ to obtain \eqref{equ-LS-6} for the second case. (Of course, the constant $C$ varies.)

Thus, we have proved \eqref{equ-LS-6} for any good interval $I_k$.

\vskip 5pt
\noindent
\emph{Step 2. We prove \eqref{4.1-w-10-4}.}
\vskip 5pt

By  \eqref{equ-LS-6}, we see
\begin{align*}
\sum_{\mbox{\footnotesize good}~ I_k} \|u\|^2_{L^\infty(I_k)}
&\leq \Xi^2 \sum_{\mbox{\footnotesize good}~ I_k}\Big (\sup_{x\in E\cap [k-2L,k+2L]}|u(x)|\Big)^2\\
&\leq \Xi^2 \sum_{k\in \Z}\Big (\sup_{x\in E\cap [k-2L,k+2L]}|u(x)|\Big)^2\\
&\leq \Xi^2 \sum_{k\in \Z} 4\sum_{m\in\{-2,-1,0,1\}}\Big(\sup_{x\in E\cap [k+mL,k+(m+1)L]}|u(x)|\Big)^2\\
&=(4\Xi)^2\sum_{k\in \Z}\Big(\sup_{x\in E\cap [k,k+L]}|u(x)|\Big)^2.
\end{align*}
With \eqref{4.7-w-10-4}, the above leads to
$$
\|u\|_{l^2L^\infty}\leq 2^{1/2}4\Xi\Big(\sum_{k\in \Z}(\sup_{x\in E\cap [k,k+L]}|u(x)|)^2\Big)^{1/2}.
$$

Therefore, we finish the proof of  Lemma \ref {lem-LS}.
\end{proof}


\subsection{Proofs of Theorem \ref{thm-ob-bound} and Theorem \ref{thm-ob-R} }\

This subsection presents the proofs of  Theorem \ref{thm-ob-bound} and Theorem \ref{thm-ob-R}.
The proofs, with the exception of  conclusion $(ii)$ in  Theorem \ref{thm-ob-bound}, can be done by  using the  adapted  Lebeau-Robbiano strategy provided in \cite{Duyckaerts2012resolvent}, together with
 the spectral inequality in Lemma \ref{lem-spec} and the uncertainty principle in Lemma \ref{lem-LS} respectively. Although we cannot directly apply  the results in \cite{Duyckaerts2012resolvent},
 the underlying ideas are  applicable  to our case. For the reader's convenience, we provide
 detailed proofs here.

\begin{proof}[Proof of Theorem \ref{thm-ob-bound}]
We start with proving the conclusion $(i)$. Let $A$ and  $\mathcal {E}_\lambda$  be given at the beginning of
subsection \ref{subsect 4.1-w-10-25}. We arbitrarily fix $\alpha>0$, and then arbitrarily fix
$E\subset (0,L)$ with $c_{h_\alpha}(E)>0$.
The proof is organized by two steps.
\vskip 5pt
\noindent {\it Step 1. We build up  a recurrence inequality.}

\vskip 5pt

First, by Lemma \ref{lem-spec},
there exists a constant $C>0$, depending only on $L,\alpha,c_{h_{\alpha}}(E)$, and a numerical constant $\lambda_0\gg 1$ such that   when $\lambda \geq \lambda_0$,
\begin{align}\label{equ-1022-1}
 \sup_{x\in [0,L]}|\phi(x)|\leq  e^{C\lambda(\log  \lambda )^{-\frac{2\alpha}{3}}} \sup_{x\in E}|\phi(x)|   \;\;\mbox{for each}\;\; \phi\in \mathcal {E}_\lambda.
\end{align}
Meanwhile, since  $\{e^{tA}\}_{t\geq 0}$ is a  contraction $C_0$-semigroup on $L^2(0,L)$, we have
\begin{align*}
\|e^{tA}\phi\|_{L^2(0,L)}\leq \|\phi\|_{L^2(0,L)}\;\;\mbox{for each}\;\; t\geq 0.
\end{align*}
Using  H\"older inequality in the above implies that when $\tau>0$ and $\phi\in {L^2(0,L)}$,
\begin{align}\label{equ-1022-2}
\|e^{\tau A}\phi\|_{L^2(0,L)}\leq \frac{2}{\tau }\int_{\frac{\tau }{2}}^\tau \|e^{t A}\phi\|_{L^2(0,L)}\d t\leq \frac{2\sqrt{L}}{\tau }\int_{\frac{\tau }{2}}^\tau \|e^{t A}\phi\|_{L^\infty(0,L)}\d t.
\end{align}
Since $e^{tA}\mathcal {E}_\lambda\subset \mathcal {E}_\lambda$ when $t\geq 0$,
 it follows from \eqref{equ-1022-1} and \eqref{equ-1022-2} that
\begin{align}\label{equ-1022-3}
\|e^{\tau A}\phi\|_{L^2(0,L)}  \leq \frac{2\sqrt{L}}{\tau }e^{C\lambda(\log  \lambda )^{-\frac{2\alpha}{3}}}\int_{\frac{\tau }{2}}^\tau  \sup_{x\in E}|e^{t A}\phi| \d t\;\;\mbox{for all}\;\; \phi\in \mathcal {E}_\lambda,\;\tau>0.
\end{align}

Next, we define two functions $\varphi$ ad $\psi$ as follows:
\begin{align*}
\psi(t):=t\log t,\;\;t>0;\;\;\;\;\varphi(\lambda):=(\log \lambda)^{\frac{2\alpha}{3}},\;\;\lambda\gg 1.
\end{align*}
Since
\begin{align*}
\lim_{\tau\to 0^+}\tau\psi(\frac{1}{\tau})=+\infty\;\;\mbox{and}\;\;\lim_{\lambda\to+\infty}\frac{C\lambda}{\varphi(\lambda)}=+\infty,
\end{align*}
we can choose $T_0>0$ such that
$$
\frac{1}{\tau }e^{C\lambda(\log  \lambda )^{-\frac{2\alpha}{3}}}=e^{\tau\psi(\frac{1}{\tau})+\frac{C\lambda}{\varphi(\lambda)}}\leq e^{\tau\psi(\frac{1}{\tau})\frac{C\lambda}{\varphi(\lambda)}},\quad\tau\in (0,T_0],\,\,\,\lambda\ge \lambda_0.
$$
With  \eqref{equ-1022-3}, the above yields that when $\tau\in (0,T_0]$,
\begin{align}\label{equ-1022-4}
\|e^{\tau A}\phi\|_{L^2(0,L)}  \leq 2\sqrt{L}e^{C\tau\psi(\frac{1}{\tau})\frac{\lambda}{\varphi(\lambda)}} \int_{\frac{\tau }{2}}^\tau  \sup_{x\in E}|e^{t A}\phi| \d t\;\;\mbox{for each}\;\; \phi\in \mathcal {E}_\lambda.
\end{align}

We now arbitrarily fix  $\phi\in L^2(0,L)$. For every $\lambda>0$, let $\phi_\lambda$ be the orthogonal projection of $\phi$ in $\mathcal {E}_\lambda$ and set $\phi^{\perp}_\lambda:=\phi-\phi_\lambda$. Then $\phi^{\perp}_\lambda\in \mathcal {E}^{\perp}_\lambda$ (the orthogonal complement of $\mathcal {E}_\lambda$ in $L^2(0,L)$).
It follows from \eqref{equ-1022-4} and the triangle inequality that when $\tau\in(0,T_0]$ and $\lambda\geq \lambda_0$,
\begin{align}\label{equ-1023-1}
\|e^{\tau A}\phi\|_{L^2(0,L)}
&\leq \|e^{\tau A}\phi_\lambda\|_{L^2(0,L)}+\|e^{\tau A}\phi^\perp_\lambda\|_{L^2(0,L)} \nonumber\\
&\leq 2\sqrt{L}e^{C\tau\psi(\frac{1}{\tau})\frac{\lambda}{\varphi(\lambda)}}\int_{\frac{\tau }{2}}^\tau  \sup_{x\in E}|e^{tA}\phi_\lambda|   \d t+\|e^{\tau A}\phi^\perp_\lambda\|_{L^2(0,L)}\nonumber\\
&\leq 2\sqrt{L}e^{C\tau\psi(\frac{1}{\tau})\frac{\lambda}{\varphi(\lambda)}}\int_{\frac{\tau }{2}}^\tau  \sup_{x\in E}|e^{tA}\phi|   \d t\nonumber\\
&\quad+2\sqrt{L}e^{C\tau\psi(\frac{1}{\tau})\frac{\lambda}{\varphi(\lambda)}}\int_{\frac{\tau }{2}}^\tau  \sup_{x\in E}|e^{tA}\phi^\perp_\lambda|   \d t+\|e^{\tau A}\phi^\perp_\lambda\|_{L^2(0,L)}.
\end{align}
For the last term on the right hand side of  \eqref{equ-1023-1}, we use facts that
$\|e^{\tau A}\phi\|_{L^2(0,L)}\leq e^{-\lambda \tau }\|\phi\|_{L^2(0,L)}$
 and $\|\phi^\perp_\lambda\|_{L^2(0,L)}\leq\|\phi\|_{L^2(0,L)}$ to get
\begin{align}\label{equ-1023-2}
\|e^{\tau A}\phi^\perp_\lambda\|_{L^2(0,L)}\leq e^{-\lambda \tau}\|\phi\|_{L^2(0,L)}\;\;\mbox{for all}\;\;\tau\in (0,T_0], \;\lambda\geq \lambda_0.
\end{align}
For the second term on the right hand side of  \eqref{equ-1023-1}, we use the $L^2\to L^\infty$ estimate:
\begin{align*}
\|e^{tA}\|_{L^2(0,L)\to L^\infty(0,L)}\leq a_0t^{-\frac{1}{4}}, \;\;t>0,
\end{align*}
for some constant $a_0>0$, depending only on $L$, and use the same argument proving
\eqref{equ-1023-2} to obtain that when $\tau\in(0,T_0]$ and $\lambda\geq \lambda_0$,
\begin{align}\label{equ-1023-3}
\int_{\frac{\tau }{2}}^\tau \sup_{x\in E}|e^{tA}\phi^\perp_\lambda|  \d t&\leq \int_{\frac{\tau }{2}}^\tau a_0(t-\frac{\tau}{3})^{-\frac{1}{4}}\|e^{\frac{\tau}{3}A}\phi^\perp_\lambda\|_{L^2(0,L)}  \d t
\leq  a_0(\frac{6}{\tau})^{\frac{1}{4}}e^{-\frac{\lambda\tau}{3}}\|\phi\|_{L^2(0,L)}.
\end{align}
Here\footnote{ In fact, in the dim-1 case, the interval length $\tau/2$ can absorb the singular term $(\frac{6}{\tau})^{1/4}$ as $\tau\to 0$. But we do not exploit this favor here. The reason is that in the higher dimension, the singular factor $(\frac{6}{\tau})^{1/4}$ will be replaced by $(\frac{6}{\tau})^{d/4}$, which could not be absorbed by $\tau/2$ if $d$ is large. Thus the proof present here also works in all dimension $d\geq 2$.} in the last step we use the fact that the length of the integral interval $[\tau/2,\tau]$ is less than $1$.

Letting $g(\tau,\lambda):=\frac{1}{2\sqrt{L}}e^{-C\tau\psi(\frac{1}{\tau})\frac{\lambda}{\varphi(\lambda)}}$, with $\tau\in(0,T_0]$ and $\lambda\geq\lambda_0$,  plugging \eqref{equ-1023-2} and \eqref{equ-1023-3} into \eqref{equ-1023-1}, we see that when $\tau\in(0,T_0]$ and $\lambda\geq \lambda_0$,
\begin{align}\label{equ-1023-4}
g(\tau,\lambda)\|e^{\tau A}\phi\|_{L^2(0,L)}
&\leq  \int_{\frac{\tau }{2}}^\tau \sup_{x\in E}|e^{tA}\phi|  \d t
+(a_0(\frac{6}{\tau})^{\frac{1}{4}}e^{-\frac{\lambda\tau}{3}}+g(\tau,\lambda)e^{-\lambda
\tau})\|\phi\|_{L^2(0,L)}.
\end{align}

Next we will estimate the last term on the right hand side of \eqref{equ-1023-4}. For this purpose, we
choose  $\tau$  to be a function of $\lambda$ in the following manner:
\begin{align}\label{equ-1023-5}
\frac{1}{\tau(\lambda)}:=\psi^{[-1]}\Big(\frac{\varphi(\lambda)}{4C} \Big), \quad \lambda\geq \lambda_0
\end{align}
and then define another function $f$ by
\begin{align}\label{equ-1023-6}
f(\lambda):=g(\tau(\lambda),\lambda)=\frac{1}{2\sqrt{L}}e^{-\frac{\lambda \tau(\lambda)}{4}},\;\;\lambda\geq \lambda_0.
\end{align}
It is easy to check that $\psi^{[-1]}(\lambda)\sim \frac{\lambda}{\log \lambda}$ for $\lambda\gg 1$. Thus we deduce from \eqref{equ-1023-5} that
\begin{align}\label{equ-1023-7}
  \tau(\lambda)\sim \frac{\log\log \lambda}{(\log \lambda)^{\frac{2\alpha}{3}}}, \quad \lambda\gg 1.
\end{align}
By \eqref{equ-1023-5} and
\eqref{equ-1023-7},
we see that when $\varepsilon>0$ is sufficiently small, there is a constant $C_\varepsilon>0$ depending only on $\varepsilon>0$ such that
\begin{align*}
\tau(\lambda)^{-\frac{1}{4}}e^{-\frac{\lambda\tau(\lambda)}{3}}\leq C_\varepsilon e^{-(\frac{1}{3}-\varepsilon)\lambda \tau(\lambda)},\;\;\lambda\geq\lambda_0.
\end{align*}
This, along with \eqref{equ-1023-6} and the fact that $\tau(\lambda)\ge\tau(\frac{5}{4}\lambda)$ for $\lambda\gg 1$,  yields that there is $\lambda_1>0$, depending only on $a_0$ and $L$, such that
\begin{align}\label{equ-1023-8}
\Big(a_0\Big(\frac{6}{\tau}\Big)^{\frac{1}{4}}e^{-\frac{\lambda\tau(\lambda)}{3}}+g(\tau(\lambda),\lambda)e^{-\lambda
\tau(\lambda)}\Big)\|\phi\|_{L^2(0,L)}\leq f\Big(\frac{5}{4}\lambda\Big)\|\phi\|_{L^2(0,L)},\,\,\,\lambda\geq\lambda_1.
\end{align}
By \eqref{equ-1023-8} and \eqref{equ-1023-4}, we obtain the following recurrence inequality:
\begin{align}\label{equ-1023-9}
f(\lambda)\|e^{\tau(\lambda) A}\phi\|_{L^2(0,L)}
-f(\frac{5}{4}\lambda)\|\phi\|_{L^2(0,L)}\leq  \int_{0}^{\tau(\lambda)} \sup_{x\in E}|e^{tA}\phi|  \d t\;\;\mbox{for each}\;\;\lambda\geq\lambda_1.
\end{align}
\vskip 5pt
\noindent {\it
Step 2. We use \eqref{equ-1023-9} to prove \eqref{thm-ob-1024h-1}.}
\vskip 5pt

First, we define  two sequences as follows:
\begin{align}\label{equ-1023-10}
\lambda_{k+1}=\frac{5}{4}\lambda_k;\; \quad \tau_k=\tau(\lambda_k),\;\;k\geq 1.
\end{align}
Thanks to the assumption that $\alpha>\frac{3}{2}$, the series $\sum_{k}\tau_k$ converges. Indeed,
by \eqref{equ-1023-5}, we see that $\tau(\lambda)$ is positive and decreasing when $\lambda\gg 1$, thus, it follows from
\eqref{equ-1023-10} that
\begin{align}\label{equ-1023-11}
\sum_{k\geq 3}\tau_k=\tau(q^2\lambda_1)+\tau(q^3\lambda_1)+\ldots\leq \int_1^\infty \tau(q^s\lambda_1)\d s,\;\;\mbox{with}\;\;q=5/4.    \end{align}
Since $\alpha>\frac{3}{2}$
and because of \eqref{equ-1023-7}, we see that the integral on the right hand side of \eqref{equ-1023-11} is finite.

Next, let $\{T_n\}_{n\geq 1}$
be such that
\begin{align}\label{equ-1023-12}
 T_n=\sum_{k\geq n}\tau_k, \quad n\geq 1.
\end{align}
Then $T_n\to 0$ as $n\to \infty$.
We arbitrarily fix $T>0$. Then there exists an $N\geq 1$ such that $T_N\leq T$ and $T_N\leq T_0$  (which was introduced in Step 1).  By \eqref{equ-1023-9}, where $\phi$ and $\lambda$ are replaced by $e^{T_{k+1}A}\phi$ and $\lambda_k$ respectively), we find
that when $k\geq N$,
\begin{align}\label{equ-1023-13}
f(\lambda_k)\|e^{T_kA}\phi\|_{L^2(0,L)}-f(\lambda_{k+1})\|e^{T_{k+1}A}\phi\|_{L^2(0,L)}\leq \int_{T_{k+1}}^{{T_k}} \sup_{x\in E}|e^{tA}\phi|  \d t.
\end{align}
Summing above for $k\geq N$,  noting that $f(\lambda_k)\to 0$ as $k\to \infty$ and $\|e^{T_{k+1}A}\phi\|_{L^2(0,L)}$ is uniformly bounded with respect to $k$, we find
\begin{align}\label{equ-1023-14}
f(\lambda_N)\|e^{T_NA}\phi\|_{L^2(0,L)} \leq \int_{0}^{{T_N}} \sup_{x\in E}|e^{tA}\phi|  \d t.
\end{align}
Since $T\geq T_N$, it follows from \eqref{equ-1023-14} that
$$
\|e^{TA}\phi\|_{L^2(0,L)}\leq \|e^{T_NA}\phi\|_{L^2(0,L)} \leq \frac{1}{f(\lambda_N)} \int_{0}^{{T_N}} \sup_{x\in E}|e^{tA}\phi|  \d t\leq \frac{1}{f(\lambda_N)} \int_{0}^{T} \sup_{x\in E}|e^{tA}\phi|  \d t,
$$
which leads to  \eqref{thm-ob-1024h-1}.

This completes the proof of the conclusion $(i)$ in   Theorem \ref{thm-ob-bound}. The conclusion $(ii)$ in   Theorem \ref{thm-ob-bound}  follows from the next Proposition \ref{thm-1017-1.1}.
Therefore the proof of Theorem \ref{thm-ob-bound} is completed.
\end{proof}

\begin{proposition}\label{thm-1017-1.1}
For each $\varepsilon>0$, define the gauge function
\begin{equation}\label{4.47-w-g-10-25}
    f_\varepsilon(t):=(\log(1/t))^{-\frac{1}{2+\varepsilon}}, \;\;t\in(0,e^{-3}].
\end{equation}
 There is a subset $E_\infty\subset (0, L)$ satisfying the following two conclusions:
\begin{enumerate}[(i)]
    \item The observability inequality \eqref{thm-ob-1024h-1} (where  $E$ is replaced by $E_\infty$)
    fails.

    \item
    $0<c_{f_\varepsilon}(E_\infty)<\infty$ but $c_{h_\alpha}(E_\infty)=0$ for each $\alpha>0$,
    where $h_\alpha$ is given by \eqref{1.7w9-21}.

\end{enumerate}
\end{proposition}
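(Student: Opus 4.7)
The plan is to construct $E_\infty$ as a nested intersection of neighborhoods of zeros of Dirichlet eigenfunctions of $-\partial_x^2$ on $(0,L)$, and to exhibit these eigenfunctions as initial data violating the observability inequality. Write $\lambda_n = (n\pi/L)^2$ and $\phi_n(x) = \sin(n\pi x/L)$. Fix a rapidly increasing sequence of integers $n_1 \mid n_2 \mid n_3 \mid \cdots$ with $\log n_{k+1}$ of order $T\lambda_{n_k}$ (doubly exponential growth), and set $\eta_k := e^{-T\lambda_{n_k}}/n_k$. Consider the intervals of half-width $L\eta_k/(n_k\pi)$ centered at the zeros $Lp/n_k$ ($p=0,\dots,n_k$); retain only those whose interval is contained in some level-$(k-1)$ interval, and let $\tilde U_k$ denote their union. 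The divisibility $n_k\mid n_{k+1}$ guarantees that every level-$k$ center is also a level-$(k+1)$ center, so at least one child survives per parent; the branching factor can be tuned so that the total number $N_k$ of surviving level-$k$ intervals satisfies $N_k\sim n_k$. Set $E_\infty := \bigcap_{k\geq 1}\tilde U_k$.

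For the Hausdorff content estimates, the natural cover by the $N_k$ intervals of common length $\ell_k$ with $\log(1/\ell_k)\sim T\lambda_{n_k}\sim n_k^2$ gives
\[
c_{h_\alpha}(E_\infty) \leq N_k\, h_\alpha(\ell_k) \sim n_k \cdot n_k^{-1}(\log n_k)^{-\alpha} = (\log n_k)^{-\alpha} \xrightarrow{k\to\infty} 0.
\]
For the lower bound $c_{f_\varepsilon}(E_\infty)>0$, I invoke the mass distribution principle with the measure $\mu$ assigning uniform mass $1/N_k$ to each level-$k$ interval. A ball $B$ of diameter $d\in(\ell_{k+1},\ell_k]$ meets at most one level-$k$ interval, hence $\mu(B)\leq 1/N_k\sim n_k^{-1}$; meanwhile $f_\varepsilon(d)\gtrsim f_\varepsilon(\ell_k)\sim n_k^{-2/(2+\varepsilon)}$. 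The inequality $n_k^{-1}\leq C n_k^{-2/(2+\varepsilon)}$ holds for every $\varepsilon>0$ (and fails exactly at $\varepsilon=0$), so the principle yields $c_{f_\varepsilon}(E_\infty)\geq \mu(E_\infty)/C>0$, while $c_{f_\varepsilon}(E_\infty)\leq f_\varepsilon(L)<\infty$ is trivial.

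For the observability failure, take $u_k(0):=\phi_{n_k}$, so that $\|u_k(T)\|_{L^2(0,L)}=\sqrt{L/2}\,e^{-\lambda_{n_k}T}$. For every $x\in E_\infty\subset\tilde U_k$ there is an integer $p$ with $|n_k\pi x/L-p\pi|\leq \eta_k$, hence $|\phi_{n_k}(x)|\leq \eta_k$ and therefore $\int_0^T \sup_{E_\infty}|u_k(t,\cdot)|\,dt\leq \eta_k/\lambda_{n_k}$. The observability quotient
\[
\frac{\|u_k(T)\|_{L^2(0,L)}}{\int_0^T \sup_{E_\infty}|u_k(t,\cdot)|\,dt}\geq \sqrt{L/2}\,\frac{\lambda_{n_k}e^{-\lambda_{n_k}T}}{\eta_k}=\sqrt{L/2}\,n_k\lambda_{n_k}\longrightarrow\infty,
\]
contradicting \eqref{thm-ob-1024h-1} with $E$ replaced by $E_\infty$. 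The most delicate part of the argument is the simultaneous compatibility of three requirements in the construction: (i) $\eta_k$ small enough to beat the heat decay $e^{-\lambda_{n_k}T}$; (ii) the branching large enough that $N_k\sim n_k$, which is needed for the mass distribution to give positive $f_\varepsilon$-content; (iii) each selected level-$(k+1)$ interval fitting inside a level-$k$ interval so the intersection is nondegenerate. All three constraints are jointly satisfiable precisely because the gauge exponent $1/(2+\varepsilon)$ is strictly smaller than the critical $1/2$, which in turn reflects the identity $F_{0,1/2}^{[-1]}(1/x)=e^{-x^2}$ highlighted in Remark \ref{remark1.2w}. This is why the construction degenerates as $\varepsilon\to 0^+$.
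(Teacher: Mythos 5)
Your overall architecture — a nested Cantor-type intersection of small neighborhoods of the zeros of high-frequency eigenfunctions, the eigenfunctions themselves as test data for the observability quotient, a direct cover for $c_{h_\alpha}(E_\infty)=0$, and the mass distribution principle for $c_{f_\varepsilon}(E_\infty)>0$ — is essentially the paper's strategy (the paper centers its intervals at the rationals $j/q_k$, which are the zeros of $\sin(q_k\pi x)$). However, the quantitative core of the content lower bound is broken. The claim $N_k\sim n_k$ is geometrically impossible: the surviving level-$k$ intervals are exactly those of the $n_k+1$ candidates that land inside the level-$(k-1)$ structure, whose total length is $N_{k-1}\ell_{k-1}$, so $N_k\approx N_{k-1}\,n_k\ell_{k-1}/L$, i.e.\ $N_k\approx n_k\prod_{j<k}\bigl(e^{-T\lambda_{n_j}}/n_j\bigr)\ll n_k$. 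No ``tuning of the branching factor'' can beat this upper bound. With the correct count, the mass per level-$k$ interval is roughly $n_k^{-(1-1/c)}$ (up to lower-order factors), where $c$ is the constant in $\log n_{k+1}\approx cT\lambda_{n_k}$, and the required inequality $\mu(I_{k,j})\lesssim f_\varepsilon(\ell_k)\sim n_k^{-2/(2+\varepsilon)}$ forces $c\ge(2+\varepsilon)/\varepsilon$. This is precisely the bookkeeping the paper encodes in \eqref{eq-1017-01}--\eqref{eq-1017-02}, and it is the actual reason the construction degenerates as $\varepsilon\to0^+$ — not the comparison $n_k^{-1}\le Cn_k^{-2/(2+\varepsilon)}$ you invoke. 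A second error sits in the same step: $f_\varepsilon$ is \emph{increasing}, so for $d\in(\ell_{k+1},\ell_k]$ you have $f_\varepsilon(d)\le f_\varepsilon(\ell_k)$, not $\gtrsim$; the worst case is $d$ near $\ell_{k+1}$, where $f_\varepsilon(d)\approx n_{k+1}^{-2/(2+\varepsilon)}$ is far smaller than $1/N_k$. Verifying \eqref{eq-1017-09} therefore requires the two-regime analysis of the paper (diameters below the level-$k$ spacing, where the ball meets $O(1)$ level-$k$ intervals, versus diameters above it, where one counts $\approx d\,n_k$ intervals and uses $\mu(B)\le\min\{\mu(I_{k-1,j}),\,d\,n_k\,\mu(I_{k,j})\}$).

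A separate, structural weakness: your $\eta_k=e^{-T\lambda_{n_k}}/n_k$ makes $E_\infty$ depend on $T$, so the observability quotient blows up only for that fixed $T$ (and hence for all smaller times, since observability at time $T$ implies it at all later times). The same test functions give a quotient $\sim n_k\lambda_{n_k}e^{-\lambda_{n_k}(T'-T)}\to0$ for $T'>T$, so your set is not shown to violate observability for large times. The paper avoids this by taking widths $\exp(-q_k^{2+\varepsilon_1})$ with $\varepsilon_1>0$, which beat $e^{-\pi^2Tq_k^2}$ for \emph{every} $T$ simultaneously; the price is that $\log(1/\ell_k)\sim n_k^{2+\varepsilon_1}$ rather than $Tn_k^2$, which tightens the content computation and produces exactly the exponent relation $(2+\varepsilon)(1-\varepsilon_2)=2+\varepsilon_1$. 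Your parts that do survive scrutiny are the upper bound $c_{h_\alpha}(E_\infty)=0$ (which only becomes easier once $N_k\ll n_k$), the trivial finiteness of $c_{f_\varepsilon}(E_\infty)$, and the observability computation itself at the fixed time $T$.
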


\begin{proof}
Without loss of generality, we can assume that    $L=1$. We arbitrarily fix  small $\varepsilon>0$
and $\alpha>0$. The proof is organized by several steps as follows:
\vskip 5pt

\noindent {\it Step 1.  We construct $E_\infty$.}
\vskip 5pt

We take $\varepsilon_2\in (0,\frac{\varepsilon}{3})$ and $\varepsilon_1\in(0,\varepsilon)$ such that
    \begin{equation}\label{eq-1017-01}
       (2+\varepsilon)(1-\varepsilon_2)=2+\varepsilon_1.
    \end{equation}
    We then construct
 a sequence of positive integers $\{q_k\}_{k\ge 1}$ in the following manner. Let  $q_1=4$.
 For each  $k\ge 2$, we take  $q_k$ such that
    \begin{equation}\label{eq-1017-02}
     \exp\left\{\frac{1}{\varepsilon_2}q_{k-1}^{2+\varepsilon_1}\right\}\leq q_k<\exp\left\{Nq_{k-1}^{2+\varepsilon_1}\right\},\,\,\,N=[\frac{1}{\varepsilon_2}]+1.
    \end{equation}
Clearly, such a sequence exists.
Before continuing our proof, we introduce the following notation:  For each $x\in\mathbb{R}$, we write $\Vert x \Vert:=\min\limits_{n\in\mathbb{Z}}|x-n|$, which represents the distance from
     $x$ to the nearest integer.
Now, we define, for each positive integer $k$, the set:
    \begin{equation}\label{eq-1017-03}
      A_k:=\left\{x\in(0,1)\setminus\mathbb{Q},\,\Vert q_kx\Vert<q_k\exp\{-q_k^{2+\varepsilon_1}\}\right\},
    \end{equation}
where  $\mathbb{Q}$ denotes the set of all rational numbers.
Let
\begin{align*}
    E_1=A_1;\;\;\;E_k=\bigcap\limits_{j=1}^k A_j\;\;\mbox{for}\;\;k\geq 2.
\end{align*}
It is clear that
 $E_{k+1}\subset E_k$ for all $k$. We now define the desired set
    \begin{equation}\label{eq-1017-04}
      E_{\infty}=\bigcap\limits_{k=1}^\infty E_k.
    \end{equation}
Note that $E_\infty$ is a subset of all \emph{Liouville numbers} in $[0,1]$.

\vskip 5pt
\noindent {\it Step 2. We prove the conclusion  (i).}
\vskip 5pt

We choose the following function as the initial datum of  \eqref{equ-heat-b} (with $L=1$)
    \begin{equation*}
       u_{0,n}(x)=\sin{n\pi x},\; x\in [0,1],\;\;\mbox{with}\;\;n\in\N^+.
    \end{equation*}
Then the corresponding solution to  \eqref{equ-heat-b} is
    \begin{equation}\label{eq-1017-05}
      u_n(t,x)=e^{-n^2\pi^2t}\sin{n\pi x},\;\;t\geq 0,\; x\in [0,1].
    \end{equation}

By contradiction, we suppose that $E_\infty$ satisfies  \eqref{thm-ob-1024h-1} for some $T>0$, i.e., there is an absolute constant $C_{obs}>0$ such that
$$
\|u(T,\cdot)\|_{L^2(0,L)}\leq C_{obs}\int_0^T\sup_{x\in E_{\infty}}|u(t,x)|\d t.
 $$
Then we have
    \begin{equation}\label{eq-1017-06}
       \frac{1}{C_{obs}}\leq\frac{\int_0^T\sup\limits_{x\in E_\infty}|u_n(t,x)|\d t}{(\int_0^1|u_n(T,x)|^2\d x)^{1/2}}\;\;\mbox{for all } n\in \N^+.
    \end{equation}
However, taking $n=q_k$ leads to
    \begin{equation*}
		\begin{split}
			\sup\limits_{x\in E_\infty}|u_n(t,x)|&=e^{-q_k^2\pi^2t}\sup\limits_{x\in E_\infty}|\sin{q_k\pi x}|\\
			&\leq \pi e^{-q_k^2\pi^2t}\sup\limits_{x\in E_\infty}\Vert q_kx\Vert\\
            &\leq \pi q_ke^{-q_k^2\pi^2t}e^{-q_k^{2+\varepsilon_1}},
		\end{split}
	\end{equation*}
 where in the first equality, we used \eqref{eq-1017-05};  in the first inequality, we used the fact that $|\sin{a\pi}|\leq\pi \Vert  a\Vert$ for $a\in\mathbb{R}$; and in the second inequality, we used \eqref{eq-1017-03}.
Hence,
    \begin{equation}\label{eq-1017-07}
     \int_0^T\sup|u_n(t,x)|\d t\le\frac{1}{\pi q_k}e^{-q_k^{2+\varepsilon_1}}.
    \end{equation}
Meanwhile, taking $n=q_k$ also yields
\begin{equation}\label{eq-1017-08}
     \left(\int_0^1|u_n(T,x)|^2\d x\right)^{1/2}=e^{-q_k^2\pi^2T}\cdot\left(\int_0^1|\sin{q_k\pi x}|^2\d x\right)^{1/2}\gtrsim e^{-q_k^2\pi^2T}.
    \end{equation}
Combining \eqref{eq-1017-07} and \eqref{eq-1017-08} shows
    \begin{equation*}
    	\frac{\int_{0}^{T}\sup\limits_{x\in E_{\infty}}|u_{n}(t,x)|\d t}{(\int_{0}^{1}|u_{n}(T,x)|^{2}\d x)^{1/2}} \lesssim \frac{1}{q_k}e^{-q_{k}^{2}(q_{k}^{\varepsilon_{1}}-\pi^{2}T)}\longrightarrow 0,\,\,\,\mbox{as}\,\,\, k\rightarrow \infty,
    \end{equation*}
which contradicts with \eqref{eq-1017-06} . So the conclusion  (i) is true.

\vskip 5pt
\noindent{\it Step 3. We prove $0<c_{f_\varepsilon}(E_\infty)<\infty$.}
 \vskip 5pt

Since $E_{\infty}$ is bounded, we have $c_{f_{\varepsilon}}(E_{\infty}) < \infty$.
The remainder is to  prove
\begin{align}\label{4.55-w-10-25}
 c_{f_{\varepsilon}}(E_{\infty}) > 0.
\end{align}
This will be done, if we can show  $m_{f_{\varepsilon}}(E_{\infty}) > 0$ (see  Remark \ref{remark2.2-w-9-29}  $(ii)$). To prove the latter, it suffices to show that
 there exists a Radon measure $\mu$ with $\mbox{supp}\; \mu\subset E_{\infty}$ and $0<\mu(\R)<\infty$, such that
    \begin{equation}\label{eq-1017-09}
    	\mu(B(x,r)) \lesssim f_{\varepsilon}(r)=(\log \frac{1}{r})^{-\frac{1}{2+\varepsilon}}\;\;\mbox{for all}\;\;x \in \mathbb{R}, r>0.
    \end{equation}
Indeed, \eqref{eq-1017-09}, along with the mass  distribution principle (see e.g. in \cite[p.112]{Mattila1995geometry} or \cite[p.60]{Falconer90}),
yields  $m_{f_{\varepsilon}}(E_{\infty}) > 0$.

Our proof of \eqref{eq-1017-09} needs the following:

 \underline{Claim 1.} {\it If we define, for each $k\in\N^+$, the intervals
\begin{equation}\label{eq-1022h-18} I_{k,j}=\left(\frac{j}{q_k}-\frac12\exp\{-q_k^{2+\varepsilon_1}\}, \frac{j}{q_k}+\frac12\exp\{-q_k^{2+\varepsilon_1}\}\right), \;\;1\le j\le q_k-1,
\end{equation}
and
\begin{equation}\label{eq-1022h-18-11}
I_{k,0}=(0,\, \frac12e^{-q_k^{2+\varepsilon_1}}),\quad I_{k,q_k}=\left(1-\frac12e^{-q_k^{2+\varepsilon_1}}, \,1\right),
\end{equation}
then for each $k\in\N^+$,
 there exist intervals $I_{k,j}$ such that
  \begin{equation}\label{eq-1021hh-10}
\bigcup_{j'=1}^{J'_{k}}\left(I_{k,j'}\setminus\mathbb{Q}\right) \subset	E_{k}\subset\bigcup_{j=0}^{J_{k}}I_{k,j},
  	 \end{equation}
where $J'_{k}$ and $J_{k}$ satisfy
    \begin{equation}\label{eq-1017-10}
  2^{-k} q_{1}q_{2}\cdots q_{k}\exp\left\{-\sum_{l=1}^{k-1}q_{l}^{2+\varepsilon_{1}}\right\} < J'_{k}\quad \mbox{and}\quad J_{k}\le 2^k q_{1}q_{2}\cdots q_{k}\exp\left\{-\sum_{l=1}^{k-1}q_{l}^{2+\varepsilon_{1}}\right\},
    \end{equation} }
 where we adopt the convention that $\sum_{l=1}^{0}q_{l}^{2+\varepsilon_{1}}=0$.

  We prove \underline{Claim 1}
  by induction with respect to $k$. For $k=1$, we note that $\Vert q_1x\Vert<\frac{q_1}{2}\exp\{-q_1^{2+\varepsilon_1}\}$  if and only if there exists an integer $j\in \{0, 1,\cdots,q_1\}$ such that
  $$
  |x-\frac{j}{q_1}|<\frac12\exp\{-q_1^{2+\varepsilon_1}\}.
  $$
  In addition, note that $E_{1}\subset(0,1)\setminus\mathbb{Q}$. Thus,  we have
  that $E_{1}=\bigcup_{j=0}^{q_{1}}(I_{1,j}\setminus\mathbb{Q})$. This implies \eqref{eq-1021hh-10} with $J_1'=q_1-1$ and $J_1=q_1$. Since $q_1=4$, we have $J_1'\ge \frac{q_1}{2}$ and $J_1\le 2q_1$.
  So \underline{Claim 1} holds for $k=1$. We inductively suppose that
  \eqref{eq-1021hh-10} and \eqref{eq-1017-10} hold for $k=n-1$ ($n\ge 2$).  We aim to show that they hold for  $k=n$. To this end, we observe two facts as follows:

 \noindent  $(i)$ For each $I_{n-1,j}$, $1\le j\le q_{n-1}-1$, we have
 $|I_{n-1,j}|=\exp\{-q_{n-1}^{2+\varepsilon_1}\}$;

\noindent  $(ii)$ The distance between the midpoints of any two adjacent intervals $I_{n, j}$ and $I_{n, j+1}$ is $\frac{1}{q_{n}}$.

  From these facts, we see that  for each $j$,  $I_{n-1,j}$ consists at least
  $$
  q_{n}\exp\{-q_{n-1}^{2+\varepsilon_1}\}-2\ge\frac{q_{n}}{2}\exp\{-q_{n-1}^{2+\varepsilon_1}\},
  $$
  and at most
$$
q_{n}\exp\{-q_{n-1}^{2+\varepsilon_1}\}+2\le 2q_{n}\exp\{-q_{n-1}^{2+\varepsilon_1}\}
$$
  intervals in   $I_{n, j}$. Therefore we have
  $$
  \frac12 J_{n-1}q_{n}\exp\{-q_{n-1}^{2+\varepsilon_1}\}\le J'_{n}\quad \mbox{and}\quad J_{n}\le 2J_{n-1}q_{n}\exp\{-q_{n-1}^{2+\varepsilon_1}\}.
  $$
  These, together with the inductive hypothesis, yield  \eqref{eq-1021hh-10} and \eqref{eq-1017-10}, with $k=n$. Hence, \underline{Claim 1} is true.

  Next, for each $k\in\N^+$ and each  $I_{k, j}\subset E_k$, two facts are clear: First, $I_{k, j}$ is contained in one of intervals $I_{k-1,l}\subset E_{k-1}$
  and contains
  a finite number of intervals $I_{k+1,i}\subset E_{k+1}$; Second, $\max\{|I_{k,l}| :  I_{k,l}\subset E_k\}$ tends to $0$ as $k$ goes to $\infty$.
  From these facts, we can  construct a mass distribution $\mu$ on $E_{\infty}$,
  through  repeated subdivisions so that each of the $k$th level intervals of length $\exp\{-q_k^{2+\varepsilon_1}\}$ in $E_k$ carries the mass $\frac{1}{J_k}$ (see \cite[Proposition 1.7]{Falconer90}). Moreover, by \eqref{eq-1017-10}, we have
  \begin{equation}\label{eq-1022h-11}
  	\mu(I_{k,j})=\frac{1}{J'_k}\le \frac{2^k\exp\{q_{1}^{2+\varepsilon_{1}}+\cdots+q_{k-1}^{2+\varepsilon_{1}}\}}{q_{1}q_{2}\cdots q_{k}}.
  \end{equation}

Now, we arbitrarily fix  $r>0$ and $x\in \R$. Then there is $k$ such that
    \begin{equation}\label{eq-1017-11}
     e^{-q_{k}^{2+\varepsilon_{1}}} \leq r < e^{-q_{k-1}^{2+\varepsilon_{1}}}.
    \end{equation}
    We will prove \eqref{eq-1017-09} by the following two cases.

\emph{Case 1:} $e^{-q_{k}^{2+\varepsilon_{1}}} \leq r < \frac{1}{q_{k}}$.

We recall that the distance between the midpoints of any two adjacent intervals  of $E_k$  is $\frac{1}{q_{k}}$. Thus, $B(x,r)$ can intersect at most $4$ intervals in $E_k$. Consequently, we have
    \begin{equation}\label{eq-1017-12}
        \mu(B(x,r)) \leq 4\mu(I_{k,j})\le 4\frac{2^ke^{q_{1}^{2+\varepsilon_1}+\cdots+q_{k-1}^{2+\varepsilon_1}}}{q_1\cdots q_k} \leq 4(\frac{1}{q_k})^{1-\varepsilon_2},
    \end{equation}
where in the second inequality, we used \eqref{eq-1022h-11}; while in the last inequality, we used the inequality $2e^{q^{2+\varepsilon_1}_{j}}\le q_{j+1}$ for $1\le j\le k-2$ the and the assumption \eqref{eq-1017-02} for the term $e^{q_{k-1}^{2+\varepsilon_1}}$.
However, it is clear that
    \begin{equation}\label{eq-1017-13}
        e^{-q_{k}^{2+\varepsilon_{1}}} \leq r \Longleftrightarrow \frac{1}{q_k}<(\log\frac{1}{r})^{-\frac{1}{2+\varepsilon_1}}.
    \end{equation}
So by \eqref{eq-1017-12}, \eqref{eq-1017-13}, the assumption $e^{-q_{k}^{2+\varepsilon_{1}}} \leq r$, and  \eqref{eq-1017-02}, we obtain \eqref{eq-1017-09}  for \emph{Case 1}.

\emph{Case 2:}  $\frac{1}{q_k} \leq r<e^{-q_{k-1}^{2+\varepsilon_{1}}}$.

In this case,   $B(x,r)$ can intersect  at most
 $\frac{r}{1/q_k}=r q_k$
  intervals in $E_k$.
Thus
   \begin{align}\label{eq-1017-14}
     \mu(B(x,r)) \leq& \min\{\mu(I_{n,j}),\,\;rq_k\cdot \mu(I_{k,j})\} \notag \\
      =&\frac{e^{q_{1}^{2+\varepsilon_1}+\cdots+q_{k-2}^{2+\varepsilon_1}}}{q_1\cdots q_{k-2}} \min\{\frac{1}{q_{k-1}},\frac{re^{q_{k-1}^{2+\varepsilon_1}}}{q_{k-1}}\}\notag \\
      \leq &(\frac{1}{q_{k-1}})^{1-\varepsilon_2}\leq C(N,\varepsilon_1,\varepsilon_2)(\log q_k)^{-\frac{1}{2+\varepsilon}},
    \end{align}
   where  $C(N,\varepsilon_1,\varepsilon_2)=N^{\frac{1-\varepsilon_2}{2+\varepsilon_1}}$. In the  second inequality above, we used the fact $re^{q_{k-1}^{2+\varepsilon_1}}<1$ and \eqref{eq-1017-02}; and in the third inequality, we used \eqref{eq-1017-01} and \eqref{eq-1017-02}.
Meanwhile, it is clear that
    \begin{equation}\label{eq-1017-15}
        \frac{1}{q_k} \leq r\Longleftrightarrow (\log q_k)^{-\frac{1}{2+\varepsilon}}\le (\log \frac{1}{r})^{-\frac{1}{2+\varepsilon}}.
    \end{equation}
By  \eqref{eq-1017-14}, \eqref{eq-1017-15} and the assumption $\frac{1}{q_k} \leq r$, we obtain \eqref{eq-1017-09} for \emph{Case 2}.
 Hence, we have proved that $0<c_{f_\varepsilon}(E_\infty)<\infty$.

\vskip 5pt
\noindent{\it Step 4. We prove that $c_{h_\alpha}(E_{\infty})=0$.}
\vskip 5pt

First, it follows from  \eqref{eq-1022h-18} that for each $k$,
\begin{align}\label{4.66-w-10-25}
f_0(|I_{k,j}|)=q_k^{-(1+\frac{\varepsilon_1}{2})},\;\;1\leq j\leq q_k.
\end{align}
where $f_0$ is given by \eqref{4.47-w-g-10-25} with $\varepsilon=0$.
Second, it follows from \eqref{eq-1021hh-10} and \eqref{eq-1017-10} that
\begin{align}\label{eq-1022h-16}
	c_{f_0}(E_{\infty})\le \sum_{j=0}^{J_k}{f_0(|I_{k,j}|)}&\le
	\left(\frac{2^kq_{1}q_{2}\cdots q_{k}}{\exp\{q_{1}^{2+\varepsilon_{1}}+\cdots+q_{k-1}^{2+\varepsilon_{1}}\}}+1\right)q_k^{-(1+\frac{\varepsilon_1}{2})}\nonumber\\
	&\le 3 q_k^{-\frac{\varepsilon_1}{2}}\longrightarrow 0,\quad \mbox{as}\,\,k\to \infty,
\end{align}
where in the last inequality, we used the fact  $2q_{j}\le e^{q^{2+\varepsilon_1}_{j}}$.
Third, it is clear that
$$
h_\alpha(t)=(\log \frac{1}{t})^{-\frac{1}{2}}(\log\log\frac{1}{t})^{-\alpha}<(\log \frac{1}{t})^{-\frac{1}{2}}= f_0(t),\quad 0<t\leq e^{-1}.
$$
With  \eqref{4.66-w-10-25} and \eqref{eq-1022h-16}, the above implies  $c_{h_\alpha}(E_{\infty})=0$. This  completes the proof of Proposition \ref{thm-1017-1.1}.
\end{proof}

\begin{remark}\label{remark4.10-w-10-21}
By the H\"older inequality,  we have the following  direct consequence of Theorem \ref{thm-ob-bound}:
 \begin{align}\label{3.32-w-g-10-10}
 \|u(T,\cdot)\|^2_{L^\infty(0,L)}\leq c_{obs}\int_0^T\sup_{x\in E}|u(t,x)|^2\d t.
\end{align}
\end{remark}

\begin{proof}[Proof of Theorem \ref{thm-ob-R}]

 Given the similarity to  the proof of  Theorem \ref{thm-ob-bound}, we omit the details and  focus on the key distinctions.

  For each $\lambda>0$, we let
 $$
 \mathcal {E}_\lambda :=\big\{ f\in L^2(\R)\; : \; \mbox{supp}\; \widehat{f}\subset [-\sqrt{\lambda},\sqrt{\lambda}]\big\}.
 $$
By Lemma \ref{lem-spec}, we see that if $E\subset \R$ is $h_\alpha$-$(\gamma,L)$ thick, then there exists a constant $C>0$ depending only on $\alpha,\gamma,L$ such that when  $\lambda\gg 1$,
\begin{align}\label{equ-1024-1}
\|u\|_{l^2L^\infty}\leq e^{C\lambda(\log \lambda)^{-\frac{2\alpha}{3}}}
\Big\|\big\{\sup_{E\cap [k,k+L]}|u|\big\}_{k\in\Z}\Big\|_{l^2(\Z)}
, \quad \forall u\in \mathcal{E}_\lambda.
\end{align}

Let $A=\partial_x^2$ be the operator with domain $D(A)=H^2(\R)$. Then $A$ generates a strongly continuous semigroup in $L^2(\R)$ such that
\begin{align}\label{equ-1024-2}
   \|e^{tA}\phi\|_{L^2(\R)}\leq \|\phi\|_{L^2(\R)}, \quad \forall t\geq 0, \phi\in L^2(\R).
\end{align}
Moreover, we have the $L^2(\R)\to L^\infty(\R)$ estimate:
\begin{align}\label{equ-1024-3}
    \|e^{tA}\|_{L^2(\R)\to L^\infty(\R)}\leq (4\pi t)^{-\frac{1}{4}}, \quad \forall t>0.
\end{align}
Let $\mathcal{E}^{\perp}_\lambda=:\big\{ f\in L^2(\R)\; : \; \mbox{supp}\; \widehat{f}\subset \R \backslash [-\sqrt{\lambda},\sqrt{\lambda}]\big\}$ be the orthogonal complement of $\mathcal{E}^{\perp}_\lambda$ in $L^2(\R)$. Then we have the decay estimate:
\begin{align}\label{equ-1024-4}
 \|e^{tA}\phi\|_{L^2(\R)}\leq e^{-t\lambda}\|\phi\|_{L^2(\R)}, \quad \forall \phi\in \mathcal{E}^{\perp}_\lambda.
\end{align}
Now, using \eqref{equ-1024-1} in place  of \eqref{equ-1022-1} from the proof of Theorem \ref{thm-ob-bound},  along with \eqref{equ-1024-2}, \eqref{equ-1024-3} and \eqref{equ-1024-4}, we obtain \eqref{2.5-w-10-26}. This
completes the proof of Theorem \ref{thm-ob-R}.





\end{proof}

\section{Extension to higher dimensions}\label{section-high-d}
We prove  Theorem \ref{thm-bound-high}/Theorem \ref{thm-Rn-high} through extending the spectral inequality/the uncertain principle to the case of  $d$-dim. This extension   is  grounded in  the generalization of Lemma \ref{lem-ana} to  higher dimensions.
For this purpose, we apply the corresponding $1$-dim results together with the slicing theory, which is analogous to the approach in Fubini's theorem.
The slicing theorem provides information on the size of  intersections of (fractal) sets in $\R^d$ with planes. One of the most powerful tools in addressing the  slicing is the concept of capacity, which is another important  set function in geometric measure theory.

\subsection{Capacities and slicing}\

We start with the following definition.
\begin{definition} ($K$-energy and $K$-capacity)
Let $K$ be a nonnegative upper semicontinuous function on $\R^d\times\R^d$.

$(i)$ Given a Radon measure  $\mu$  on $\R^d$, the $K$-energy of $\mu$ is defined by
    $$
    I_K(\mu):=\int_{\R^d} \int_{\R^d} K(x,y)  \d \mu (x)\d \mu (y).
    $$

$(ii)$ Given a bounded Borel  set $E\subset \R^d$, the $K$-capacity of $E$ is defined by
$$
C_K(E):=\sup\left\{I_K(\mu)^{-1}:\mu \mbox{ is a Radon measure in }\;\R^d\;\mbox{s.t.} \; \mbox{supp}\;\mu\subset E, \mu(\R^d)=1\right\},
$$
with the convention  that $C_K(\emptyset)=0$.
\end{definition}
\begin{remark}\label{remark4.2-w-10-8}
 If $K(x,y)=|x-y|^{-s}, s>0$, then $C_K$ corresponds  to the classical Riesz $s$-capacity.
   We refer readers to \cite{Landkof1972found} for further details on  capacities.
\end{remark}
\begin{definition}\label{definition4.3-w-10-8}(Grassmannian manifold and Radon probability measure)

$(i)$ The Grassmannian manifold $G(d,m)$ (with $1\leq m<d$) consists of all $m$-dimensional linear subspaces of $\R^d$. If $V\subset G(d,m)$, we use  $V^\bot\in G(d,d-m)$ to denote
the orthogonal complement of $V$, and  write $V_a:=V+a$ (with $a\in V^\bot$) for  the $m$-dim plane through $a$ and parallel to $V$.

$(ii)$ Given $V\in G(d,m)$, the Radon probability  measure $\gamma_{d,m}$ on $G(d,m)$ is defined by
$$
\gamma_{d,m}(A):=\theta_d(\{g\in O(d)\;:\;gV\in A\}) \quad \mbox{ for } A \subset G(d,m),
$$
    where $O(d)$ is the orthogonal group consisting all orthogonal transformations on $\R^d$,  $\theta_d$ is
    the Haar measure on  $O(d)$ such that  $\theta_d(O(d))=1$.
\end{definition}
\begin{remark}\label{remark4.4-w-10-8} We give two additional remarks below, see \cite{Mattila1995geometry} for more details.

$(i)$ The measure $\gamma_{d,m}$ is independent of the choice of $V$.

$(ii)$ When $m=1$, the measure $\gamma_{d,1}$ reduces to the surface measure $\sigma^{d-1}$ on $S^{d-1}$ as follows:
$$
\gamma_{d,1}(A)=\sigma^{d-1}\left( \bigcup_{L\in A} L\cap S^{d-1} \right), \quad A\subset G(d,1).
$$
    \end{remark}
The following result on the capacity will be used later.

\begin{proposition}[Mattila \cite{Mattila1081integral}]\label{prop-Mattila}
Let $d>m\geq 1$.
Assume  that for some constant $b>0$,
$$
{K(x,y)}\geq b|x-y|^{m-d},\; (x,y)\in \R^d\times\R^d, |x-y|\leq 1.
$$
Let $H$ be  another lower semicontinuous kernel defined by
\begin{align*}
{H(x,y)}&={K(x,y)}|x-y|^{n-m},\; (x,y)\in \R^d\times\R^d, x\neq y,\\
{H(x,x)}&=\liminf_{(y,z)\to (x,x)\atop
 y\neq z} {H(y,z)},\; x\in\R^d.
\end{align*}
 Then there exists a constant $c>0$, depending only on $d$ and $m$, such that for any compact set $E\subset \R^d$,
 \begin{align}\label{4.1-w-10-8}
 C_K(E)\leq c\int \int_{V^\bot} C_{H}(E\cap V_a) \d \mathcal{H}^{d-m}a\d \gamma_{d,m}V.
 \end{align}
\end{proposition}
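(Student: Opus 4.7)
The plan rests on a classical integral-geometric identity over the Grassmannian. I would first take a probability measure $\mu$ supported on $E$ with $I_K(\mu)$ arbitrarily close to $1/C_K(E)$, mollify it if necessary so that it has a smooth density $\rho$, and disintegrate along slices: for each $V\in G(d,m)$ and $a\in V^\perp$, define $\sigma_{V,a}$ to be the measure on the $m$-plane $V_a$ with density $\rho$ with respect to $\mathcal{H}^m$. Then $m_{V,a}:=\sigma_{V,a}(\R^d)$ is the marginal density of $\mu$ on $V^\perp$, so by Fubini $\int_{V^\perp} m_{V,a}\,d\mathcal{H}^{d-m}a=1$ for every $V$, and hence $\int_{G(d,m)}\int_{V^\perp} m_{V,a}\,d\mathcal{H}^{d-m}a\,d\gamma_{d,m}V=1$.

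The heart of the argument is the identity
\begin{equation*}
\int_{G(d,m)}\int_{V^\perp} I_H(\sigma_{V,a})\,d\mathcal{H}^{d-m}a\,d\gamma_{d,m}V \;=\; c_{d,m}\,I_K(\mu).
\end{equation*}
To prove it, write $x=a+u$, $y=a+v$ with $u,v\in V$; since $H(x,y)=K(x,y)|x-y|^{d-m}$, the change of variables $(a,v)\leftrightarrow y=a+v$, $z=u-v$ rewrites the left-hand side as
\begin{equation*}
\int_{\R^d}\int_{G(d,m)}\int_V K(y+z,y)|z|^{d-m}\rho(y+z)\rho(y)\,d\mathcal{H}^m(z)\,d\gamma_{d,m}V\,d\mathcal{H}^d(y).
\end{equation*}
The inner double integral is handled by the Buffon-type formula
\begin{equation*}
\int_{G(d,m)}\int_V g(z)\,d\mathcal{H}^m(z)\,d\gamma_{d,m}V \;=\; c_{d,m}\int_{\R^d} g(z)\,|z|^{m-d}\,d\mathcal{H}^d(z),
\end{equation*}
which follows from polar coordinates and the $O(d)$-invariance of $\gamma_{d,m}$. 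The factor $|z|^{d-m}$ cancels $|z|^{m-d}$, and the remaining double integral collapses to $c_{d,m}\,I_K(\mu)$.

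With this identity in hand, the conclusion is routine. Whenever $m_{V,a}>0$, the normalized slice measure $\sigma_{V,a}/m_{V,a}$ is a probability measure on $E\cap V_a$, so by the definition of capacity $I_H(\sigma_{V,a})\geq m_{V,a}^2/C_H(E\cap V_a)$. Substituting this lower bound into the identity and then applying $\int f^2/g\,d\lambda\geq (\int f\,d\lambda)^2/\int g\,d\lambda$ with $f=m_{V,a}$, $g=C_H(E\cap V_a)$, and $\lambda=\mathcal{H}^{d-m}\otimes\gamma_{d,m}$, together with $\int\int m_{V,a}=1$, yields
\begin{equation*}
c_{d,m}\,I_K(\mu)\;\geq\;\Bigl(\int_{G(d,m)}\int_{V^\perp}C_H(E\cap V_a)\,d\mathcal{H}^{d-m}a\,d\gamma_{d,m}V\Bigr)^{-1}.
\end{equation*}
Letting $I_K(\mu)\to 1/C_K(E)$ gives the desired inequality $C_K(E)\leq c\int\int C_H(E\cap V_a)$.

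The main technical obstacle is justifying the disintegration for an arbitrary Radon $\mu$ and the interchange of orders of integration underlying the identity. This is handled by the standard device of convolving $\mu$ with a smooth mollifier $\psi_\varepsilon$, running the calculation on $\mu\ast\psi_\varepsilon$ (which has a smooth density, so the manipulations are rigorous), and passing $\varepsilon\to 0$. The hypothesis $K(x,y)\geq b|x-y|^{m-d}$ near the diagonal enters here in two roles: it ensures that $I_K(\mu)<\infty$ forces the slice measures to have finite $H$-energy for $(\mathcal{H}^{d-m}\otimes\gamma_{d,m})$-a.e.\ pair $(V,a)$, and together with the lower semicontinuity of $H$ it legitimises the limit in the slice-wise capacity inequality and in the Cauchy--Schwarz step.
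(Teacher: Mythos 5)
The paper does not actually prove this proposition: it is quoted from Mattila's 1981 article \cite{Mattila1081integral}, so there is no internal proof to compare against. Your argument is, in outline, exactly Mattila's: an energy identity obtained by integrating the sliced energies over the Grassmannian via the polar-type formula $\int_{G(d,m)}\int_V g\,\d\mathcal{H}^m\,\d\gamma_{d,m}V=c_{d,m}\int_{\R^d}g(z)|z|^{m-d}\,\d z$, followed by the capacity lower bound $I_H(\sigma_{V,a})\ge m_{V,a}^2/C_H(E\cap V_a)$ on each slice and Cauchy--Schwarz in $(V,a)$. That skeleton is correct, and the bookkeeping checks out: the factor $|z|^{d-m}$ in $H$ (the exponent $n-m$ in the statement is a typo for $d-m$) cancels the Jacobian factor $|z|^{m-d}$, and $\int\int m_{V,a}=1$ closes the Cauchy--Schwarz step.

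Two points in your limiting argument are thinner than you acknowledge. First, after mollification the measure $\mu\ast\psi_\varepsilon$ is supported on an $\varepsilon$-neighbourhood $E_\varepsilon$ of $E$, not on $E$, so the slice-wise capacity bound produces $C_H(E_\varepsilon\cap V_a)$ on the right-hand side; to recover $C_H(E\cap V_a)$ you need continuity of $C_H$ along decreasing sequences of compacta (this is where the lower semicontinuity of $H$ is genuinely used) together with a domination allowing you to interchange the limit $\varepsilon\to0$ with the $(V,a)$-integral. Second, and more seriously, you need $\limsup_{\varepsilon\to0} I_K(\mu\ast\psi_\varepsilon)\le I_K(\mu)$; for a general upper semicontinuous radial kernel this is not automatic (mollification is only comparable to the shifted kernel $\hat K((r-2\varepsilon)^+)$, which blows up at the diagonal), so one must either first truncate $K$ and track how the truncation interacts with $H$, or --- as Mattila does --- avoid mollification altogether and work with the sliced measures $\mu_{V,a}$, which exist and are carried by $E\cap V_a$ for a.e.\ $(V,a)$ precisely because the hypothesis $K(x,y)\ge b|x-y|^{m-d}$ forces $I_{d-m}(\mu)<\infty$. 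Your description of the role of that hypothesis (``forces the slice measures to have finite $H$-energy'') slightly misplaces it: finiteness of the sliced energies for a.e.\ $(V,a)$ is a \emph{consequence} of the key identity, whereas the hypothesis is what makes the disintegration itself well defined.
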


\begin{remark}
    $(i)$ In \eqref{4.1-w-10-8}, the notation ``$\d \mathcal{H}^{d-m}a$" means the integral along the variation of $a\in V^\bot$; ``$\d \gamma_{d,m}V$" means the integral along with the variation
    of $V\in G(d,m)$; $E\cap V_a$ can be understood as the slice of $E$ by $V_a$.

    $(ii)$ In the subsequent analysis, we shall focus on a particular class of kernels:
$$
K(x,y)=\hat{K}(|x-y|), \quad x,y\in \R^d,
$$
where $\hat{K} $ is a nonnegative upper semicontinuous function on $\R^+$.
We will  use $C_{\hat {K}}$ to denote $C_K$ when there is no ambiguity.
\end{remark}

We introduce two specific kernels which will be used in our analysis:
\begin{align}\label{4.2-w-g-s-10-8}
    K(x,y)=1/F_{\alpha,\beta}(|x-y|)\;\;\mbox{and}\;\;H(x,y)=1/h_{\alpha,\beta}(|x-y|),
\end{align}\label{4.2-w-g-10-8}
where $F_{\alpha,\beta}$
is given by
\eqref{equ-intro-F}, while
\begin{align}\label{equ-h}
    h_{\alpha,\beta}(t):=(\log \frac{1}{t})^{-\beta}(\log\log\frac{1}{t})^{-\alpha}, \quad 0<t\leq e^{-3}.
\end{align}
We note that the above $H$ and $K$ are defined for $(x,y)\in\R^d\times\R^d$ such that $0<|x-y|\leq e^{-3}$. But they should be treated as nonegative and semicontinuous extensions of $K$ and $H$ over
$\R^d\times\R^d$.

By applying Proposition \ref{prop-Mattila}, with $K$ and $H$ given by \eqref{4.2-w-g-s-10-8}
and $m=1$, we have the following consequence:

\begin{corollary}\label{cor-11-1}
 There is a constant $c>0$, depending only on $d$, such that for any $\alpha>0$ and $\beta>0$, and for each compact set $E\subset\R^d$,
 \begin{align}\label{equ-slicing-1}
  C_{F^{-1}_{\alpha,\beta}}(E)\leq c\int \int_{l^\bot}C_{h^{-1}_{\alpha,\beta}}(E\cap l_a) \d \mathcal{H}^{d-1}a\d \gamma_{d,1}l,
  \end{align}
   where $l$ are lines passing through the origin and $l_a:=l+a$.
\end{corollary}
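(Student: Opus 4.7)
The plan is to apply Proposition~\ref{prop-Mattila} with $m=1$ directly to the kernels in \eqref{4.2-w-g-s-10-8}, after extending the gauge functions $F_{\alpha,\beta}$ and $h_{\alpha,\beta}$ past $t=e^{-3}$ in a way that preserves the algebraic identity $F_{\alpha,\beta}(t)=t^{d-1}h_{\alpha,\beta}(t)$. Concretely, I would set $h_{\alpha,\beta}(t):=h_{\alpha,\beta}(e^{-3})=3^{-\beta}(\log 3)^{-\alpha}$ for every $t\ge e^{-3}$, and then declare $F_{\alpha,\beta}(t):=t^{d-1}h_{\alpha,\beta}(t)$ for $t\ge e^{-3}$. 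With these extensions, both functions are continuous and positive on $(0,\infty)$, and the associated kernels
\[
K(x,y):=1/F_{\alpha,\beta}(|x-y|),\qquad H(x,y):=1/h_{\alpha,\beta}(|x-y|)
\]
are nonnegative on $\R^d\times\R^d$, continuous off the diagonal, and tend to $+\infty$ as $y\to x$; in particular they are semicontinuous on all of $\R^d\times\R^d$, as required by Proposition~\ref{prop-Mattila}, with the value on the diagonal matching the $\liminf$ prescription for $H$.

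Next, I would check the lower bound hypothesis $K(x,y)\ge b\,|x-y|^{1-d}$ on $|x-y|\le 1$. On $(0,e^{-3}]$ we have
\[
K(x,y)\,|x-y|^{d-1}=\Big(\log\tfrac{1}{|x-y|}\Big)^{\beta}\Big(\log\log\tfrac{1}{|x-y|}\Big)^{\alpha}\ge 3^{\beta}(\log 3)^{\alpha},
\]
while on $[e^{-3},1]$ the same product equals exactly the constant $3^{\beta}(\log 3)^{\alpha}$ by construction. Hence $b:=3^{\beta}(\log 3)^{\alpha}$ works uniformly on $|x-y|\le 1$. A direct computation now gives $H(x,y)=K(x,y)\,|x-y|^{d-1}=1/h_{\alpha,\beta}(|x-y|)$ off the diagonal, so the $H$ produced by Proposition~\ref{prop-Mattila} is exactly the kernel figuring in the right-hand side of \eqref{equ-slicing-1}.

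Finally, I would specialize Proposition~\ref{prop-Mattila} with $m=1$: here $G(d,1)$ is parametrized by lines $\ell$ through the origin, $V^{\perp}$ becomes $\ell^{\perp}$, $V_a$ becomes $\ell_a=\ell+a$ for $a\in \ell^{\perp}$, and $d\mathcal{H}^{d-m}=d\mathcal{H}^{d-1}$. The conclusion of the proposition is then precisely \eqref{equ-slicing-1}, with a constant $c$ depending only on $d$. I do not foresee any serious obstacle; the whole argument is essentially a bookkeeping exercise to verify that our extensions of $F_{\alpha,\beta}$ and $h_{\alpha,\beta}$ meet the semicontinuity and lower-bound hypotheses of Mattila's theorem, while being compatible with the relation $H=K\,|x-y|^{d-1}$ imposed by the theorem.
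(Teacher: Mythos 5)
Your proposal is correct and follows exactly the paper's route: the paper obtains Corollary~\ref{cor-11-1} by applying Proposition~\ref{prop-Mattila} with $m=1$ to the kernels $K=1/F_{\alpha,\beta}$ and $H=1/h_{\alpha,\beta}$, noting only that these must be understood via suitable nonnegative semicontinuous extensions beyond $t=e^{-3}$. Your explicit choice of extension, the verification of the lower bound $K(x,y)\ge b|x-y|^{1-d}$, and the check that $H=K\,|x-y|^{d-1}$ simply make precise what the paper leaves implicit.
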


The next lemma on the slicing plays an important role in our further studies.

\begin{lemma}\label{lem-slicing}
There is a line $l\subset \R^d$ (passing through the origin) and two positive constants $c$ and $k$ (depending only on $d$) such that for any $\alpha, \beta>0$, for each ball  $B_r\subset\R^d$ with the radius
 $r\in(0,2^{-1}e^{-3}]$ and each compact subset  $E\subset B_r$,
\begin{align}\label{4.5-w-g-10-8}
\mathcal{H}^{d-1}\{a\in l^\bot: C_{h^{-1}_{\alpha,\beta}}(E\cap l_a)\geq kr^{-(d-1)}C_{F^{-1}_{\alpha,\beta}}(E)\}\geq \frac{c C_{F^{-1}_{\alpha,\beta}}(E) }{h_{\alpha,\beta}(2r)}.
\end{align}
\end{lemma}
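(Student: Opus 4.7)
\textbf{Proof plan for Lemma \ref{lem-slicing}.}

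The plan is to combine the averaged slicing inequality from Corollary \ref{cor-11-1} with two ingredients: a pigeonhole argument to fix a single good line $l$, and a diameter-based pointwise upper bound on $C_{h^{-1}_{\alpha,\beta}}(E\cap l_a)$ that lets me run a Markov/layer-cake argument.

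First I would apply Corollary \ref{cor-11-1} to the given compact set $E\subset B_r$, which yields
$$
C_{F^{-1}_{\alpha,\beta}}(E)\leq c\int_{G(d,1)}\Big(\int_{l^\bot}C_{h^{-1}_{\alpha,\beta}}(E\cap l_a)\,\d \mathcal{H}^{d-1}a\Big)\d \gamma_{d,1}l.
$$
Since $\gamma_{d,1}$ is a probability measure, an averaging/pigeonhole step produces a particular direction $l\in G(d,1)$ such that
$$
\int_{l^\bot}C_{h^{-1}_{\alpha,\beta}}(E\cap l_a)\,\d \mathcal{H}^{d-1}a\geq \tfrac{1}{c}\,C_{F^{-1}_{\alpha,\beta}}(E).
$$
This $l$ will be the line in the statement; the dependence on $d$ alone is preserved because $c$ comes from Proposition \ref{prop-Mattila}.

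Next I would prove two pointwise facts that control the integrand on the right-hand side. \emph{Support bound:} since $E\subset B_r$, we have $E\cap l_a=\emptyset$ whenever $|a|>r$, so the integrand is supported in the $(d-1)$-dim disc $l^\bot\cap B_r$, whose $\mathcal{H}^{d-1}$-measure equals $\omega_{d-1}r^{d-1}$ for a dimensional constant $\omega_{d-1}$. \emph{Diameter bound:} each slice $E\cap l_a$ has diameter at most $2r\leq e^{-3}$, and since $h_{\alpha,\beta}$ is increasing on $(0,e^{-3}]$, any probability measure $\mu$ supported on $E\cap l_a$ satisfies
$$
I_{h^{-1}_{\alpha,\beta}}(\mu)=\iint \frac{\d\mu(x)\d\mu(y)}{h_{\alpha,\beta}(|x-y|)}\geq \frac{1}{h_{\alpha,\beta}(2r)},
$$
so $C_{h^{-1}_{\alpha,\beta}}(E\cap l_a)\leq h_{\alpha,\beta}(2r)$.

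Finally I would set the threshold $M:=kr^{-(d-1)}\,C_{F^{-1}_{\alpha,\beta}}(E)$ and split
$$
\int_{l^\bot}C_{h^{-1}_{\alpha,\beta}}(E\cap l_a)\,\d\mathcal{H}^{d-1}a \leq h_{\alpha,\beta}(2r)\cdot\mathcal{H}^{d-1}(G)+M\cdot\omega_{d-1}r^{d-1},
$$
where $G:=\{a\in l^\bot:C_{h^{-1}_{\alpha,\beta}}(E\cap l_a)\geq M\}$, and the second term uses the support bound together with the definition of $M$. Choosing $k:=1/(2c\,\omega_{d-1})$ makes the second term at most $\tfrac{1}{2c}\,C_{F^{-1}_{\alpha,\beta}}(E)$, so combining with the lower bound from the first step gives
$$
\mathcal{H}^{d-1}(G)\geq \frac{C_{F^{-1}_{\alpha,\beta}}(E)}{2c\,h_{\alpha,\beta}(2r)},
$$
which is the conclusion after renaming constants. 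The only mildly technical point I anticipate is justifying Borel measurability of $a\mapsto C_{h^{-1}_{\alpha,\beta}}(E\cap l_a)$ so that the pigeonhole step and the Markov split are meaningful, but this is standard for capacities defined by a lower-semicontinuous kernel and a compact set; there is no substantive analytic obstacle beyond that.
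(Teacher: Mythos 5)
Your proposal is correct and follows essentially the same route as the paper's proof: apply Corollary \ref{cor-11-1}, pigeonhole over $\gamma_{d,1}$ to fix a good line, bound each slice capacity by $h_{\alpha,\beta}(2r)$ via the energy lower bound on $B_r$, bound the support of the nonempty slices by a constant times $r^{d-1}$, and conclude with a Chebyshev-type split. The only (harmless) difference is that your split avoids the paper's auxiliary bound $C_{F^{-1}_{\alpha,\beta}}(E)\leq F_{\alpha,\beta}(2r)$, which the paper uses to check positivity of a coefficient that your version never introduces.
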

\begin{proof}
Let $\alpha,\beta>0$. We arbitrarily fix
$B_r$ with  $r\in(0,2^{-1}e^{-3}]$ and a compact set $E\subset B_r$.
Several fact are given as follows:

{\it Fact one:} According to Corollary \ref{cor-11-1}, there is a line $l\subset \R^d$ (passing through the origin)
such that
\begin{align}\label{equ-lem-slic-0}
 \int_{l^\bot} C_{h^{-1}_{\alpha,\beta}}(E\cap l_a) \d \mathcal{H}^{d-1}a\geq c_1 C_{F^{-1}_{\alpha,\beta}}(E),
\end{align}
where $c_1>0$ is a constant depending only on $d$.

{\it  Fact two:}  We have
\begin{align}\label{equ-lem-slic-2}
    C_{h^{-1}_{\alpha,\beta}}(E\cap l_a)\leq h_{\alpha,\beta}(2r):=(\log \frac{1}{2r})^{-\beta}(\log\log\frac{1}{2r})^{-\alpha}.
\end{align}
Indeed, to show \eqref{equ-lem-slic-2}, it suffices to prove the following two estimates:
\begin{align}\label{4.8-w-g-s-10-8}
    C_{h^{-1}_{\alpha,\beta}}(E\cap l_a)\leq C_{h^{-1}_{\alpha,\beta}}(B_r);
\end{align}
\begin{align}\label{4.9-w-g-s-10-8}
  C_{h^{-1}_{\alpha,\beta}}(B_r)\leq h_{\alpha,\beta}(2r).
\end{align}
The estimate \eqref{4.8-w-g-s-10-8} follows from the monotonicity of the capacity
and the fact that $E\subset B_r$. We now show \eqref{4.9-w-g-s-10-8}.
Indeed, since $C_{h^{-1}_{\alpha,\beta}}$ has the translation invariance, we only need to show
\eqref{4.9-w-g-s-10-8} for the ball $B_r$
  centered at the origin. Let $\mu$ be a Radon measure such that its supported is in $B_r$ and $\mu(B_r)=1$. Then  the $h^{-1}_{\alpha,\beta}$-engery of $\mu$ is
\begin{align}\label{equ-lem-slic-1}
I_{h^{-1}_{\alpha,\beta}}(\mu)=\int_{B_r}\int_{B_r} \frac{1}{h_{\alpha,\beta}(|x-y|)}\d \mu(x)\d \mu(y).
\end{align}
Since $|x-y|\leq 2r\leq e^{-3}$ when $x,y\in B_r$ and  $h_{\alpha,\beta}$ is a increasing function on $\R^+$, we have
$$
\frac{1}{h_{\alpha,\beta}(|x-y|)}\geq \frac{1}{h_{\alpha,\beta}(2r)}, \quad x,y\in B_r.
$$
With  \eqref{equ-lem-slic-1}, the above yields
$$
I_{h^{-1}_{\alpha,\beta}}(\mu)\geq \frac{1}{h_{\alpha,\beta}(2r)}(\mu(B_r))^2=\frac{1}{h_{\alpha,\beta}(2r)}.
$$
 Since $\mu$  was arbitrarily chosen, the above, along with the definition of the capacity, leads to
\eqref{4.9-w-g-s-10-8}.

{\it  Fact three:} By a similar way showing \eqref{equ-lem-slic-2}, we  obtain
\begin{align}\label{equ-lem-slic-3}
    C_{F^{-1}_{\alpha,\beta}}(E)\leq F_{\alpha,\beta}(2r)= (2r)^{d-1} h_{\alpha,\beta}(2r).
\end{align}

{\it  Fact four:} It is clear that  for some constant $c_2>0$, depending only on $d$,
\begin{align}\label{equ-lem-slic-4}
\mathcal{H}^{d-1}\{a\in l^\bot:E\cap l_a\neq \emptyset\}\leq \mathcal{H}^{d-1}\{a\in l^\bot:B_r\cap l_a\neq \emptyset\}\leq  c_2r^{d-1}.
\end{align}

Now for each $k>0$, we define the set
$$
E_k:=\{a\in l^\bot: C_{h^{-1}_{\alpha,\beta}}(E\cap l_a)\geq kr^{-(d-1)}C_{F^{-1}_{\alpha,\beta}}(E)\}.
$$
Then it follows that
for each $k>0$,
\begin{align}\label{4.13-w-10-8}
c_1 C_{F^{-1}_{\alpha,\beta}}(E)&\leq    \int_{l^\bot} C_{h^{-1}_{\alpha,\beta}}(E\cap l_a) \d \mathcal{H}^{d-1}a\nonumber\\
&\leq \int_{E_k} C_{h^{-1}_{\alpha,\beta}}(E\cap l_a) \d \mathcal{H}^{d-1}a + \int_{l^\bot \backslash E_k} C_{h^{-1}_{\alpha,\beta}}(E\cap l_a) \d \mathcal{H}^{d-1}a\nonumber\\
&\leq \mathcal{H}^{d-1}(E_k)\sup_{a\in l^\bot}C_{h^{-1}_{\alpha,\beta}}(E\cap l_a) + kr^{-(d-1)}C_{F^{-1}_{\alpha,\beta}}(E) \mathcal{H}^{d-1}\{a\in l^\bot\backslash E_k: E\cap l_a\neq \emptyset\}\nonumber\\
&\leq \mathcal{H}^{d-1}(E_k)h_{\alpha,\beta}(2r)+kr^{-(d-1)}C_{F^{-1}_{\alpha,\beta}}(E)\Big(c_2r^{d-1}-\mathcal{H}^{d-1}(E_k)\Big)\nonumber\\
&\leq kc_2C_{F^{-1}_{\alpha,\beta}}(E)+\Big(h_{\alpha,\beta}(2r)- kr^{-(d-1)}C_{F^{-1}_{\alpha,\beta}}(E) \Big)\mathcal{H}^{d-1}(E_k),
\end{align}
where in the first inequality above, we used \eqref{equ-lem-slic-0}, in the third inequality, we used the definition of $E_k$, and in the fourth inequality, we applied facts \eqref{equ-lem-slic-2} and \eqref{equ-lem-slic-4}.
However, it follows from  \eqref{equ-lem-slic-3} that
$$
h_{\alpha,\beta}(2r)- kr^{-(d-1)}C_{F^{-1}_{\alpha,\beta}}(E)\geq h_{\alpha,\beta}(2r)(1-k2^{d-1})\geq \frac{1}{2}h_{\alpha,\beta}(2r)>0,\;\;\mbox{when}\;\;k\leq 2^{-d},
$$
while it is clear that
$$
kc_2C_{F^{-1}_{\alpha,\beta}}(E)\leq \frac{1}{2}c_1 C_{F^{-1}_{\alpha,\beta}}(E)\;\;\mbox{when}\;\;k\leq \frac{c_1}{2c_2}.
$$
Then the above two estimates, along with  \eqref{4.13-w-10-8} where  $k:=\min\{2^{-d}, \frac{c_1}{2c_2}\}$, lead to
\eqref{4.5-w-g-10-8}. This completes the proof.
\end{proof}

\subsection{Quantitative relationship between Hausdorff contents and capacities}\

Our further studies on the propagation of smallness  for analytic functions
in the $d$-dim case are based on Lemma \ref{lem-ana}
and Lemma \ref{lem-slicing}. Lemma \ref{lem-ana} addresses  the $1$-dim case  at the scale of  log-type Hausdorff content, while Lemma \ref{lem-slicing} is a slicing theorem based on the capacity. Thus, we need to build up a quantitative connection between the two scales above. That is the aim of this subsection. It deserves mentioning that a qualitative connection between them has been established (see \cite[p.234]{Kametani1945on} or \cite[p.28]{Carleson1967selected}) and  we  adopt ideas from these references.

We begin by revisiting two established  lemmas whose proofs can be found in \cite[pp.19-20]{Carleson1967selected} and  \cite[pp.7-8]{Carleson1967selected} or  \cite[pp.234-236]{Kametani1945on}, respectively.

\begin{lemma}\label{lem-equi-cap}
 Let $E\subset\R^d$ be a bounded Borel set and $K$ be a nonnegative upper semicontinuous function on $(0,\infty)$. Let
  $\Gamma_E$ be a collection of Radon measures $\mu$, with $\mbox{supp} \mu\subset E$, such that
\begin{align}\label{4.14-w-10-12}
 u_\mu(x):=\int_{\R^d}  {K(|x-y|)}\d \mu(y)\leq 1, \;\;x\in E.
\end{align}
 Then the capacity of $E$ satisfies
 $$
 C_K(E)=\sup_{\mu\in \Gamma_E}\mu(E).
 $$
\end{lemma}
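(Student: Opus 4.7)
The plan is to establish the equality via two matching inequalities, following the classical Kametani--Carleson dual characterization of capacity.

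For the easy direction $\sup_{\mu\in\Gamma_E}\mu(E)\leq C_K(E)$: given $\mu\in\Gamma_E$ with $\mu(E)>0$, normalize to the probability measure $\nu:=\mu(E)^{-1}\mu$ on $E$. Since $\mbox{supp}\,\mu\subset E$ and $u_\mu\leq 1$ there, Fubini gives
$$
I_K(\nu) \;=\; \mu(E)^{-2}\int_E u_\mu(x)\,\d\mu(x) \;\leq\; \mu(E)^{-1},
$$
so by definition $C_K(E)\geq I_K(\nu)^{-1}\geq \mu(E)$, and taking the supremum yields the bound.

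For the reverse direction $C_K(E)\leq\sup_{\mu\in\Gamma_E}\mu(E)$, I would assume $C_K(E)>0$ (the zero case being trivial) and set $V:=1/C_K(E)$. After reducing to compact $E$ by inner regularity of Radon measures, the strategy is to produce an equilibrium measure by a direct variational argument: pick unit-mass Radon measures $\nu_n$ on $E$ with $I_K(\nu_n)\to V$, and extract a weak-$*$ limit $\nu^*$ using weak-$*$ compactness (boundedness of $E$) together with lower semicontinuity of the energy functional under weak-$*$ convergence, the latter being obtained by approximating $K$ from below by bounded continuous truncations $K_M:=\min(K,M)$ and applying monotone convergence. The limit satisfies $I_K(\nu^*)=V$. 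The Frostman variational principle, obtained by perturbing $\nu^*$ along admissible mass-preserving directions (any such perturbation cannot decrease $I_K$ at the minimum), then yields the potential bound $u_{\nu^*}(x)\leq V$ for every $x\in E$ outside an exceptional set $N$ of $K$-capacity zero. The natural candidate is $\tilde\mu:=\nu^*/V$, a Radon measure supported in $E$ with $u_{\tilde\mu}\leq 1$ off $N$ and $\tilde\mu(E)=1/V=C_K(E)$.

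The main obstacle is the rigorous treatment of the capacity-zero exceptional set $N$: membership in $\Gamma_E$ demands $u_{\tilde\mu}\leq 1$ literally pointwise on all of $E$, not merely quasi-everywhere. I would handle this by a standard capacitary cleanup: for each $\varepsilon>0$, cover $N$ by an open set $U$ of arbitrarily small $K$-capacity, restrict $\nu^*$ to $E\setminus U$, rescale so the potential bound holds everywhere on $E$, and verify that the resulting measure lies in $\Gamma_E$ with total mass at least $C_K(E)-\varepsilon$. Letting $\varepsilon\to 0$ then concludes. This last capacitary step is the only delicate point; it is the technical content carried out in detail in the Kametani and Carleson references cited in the text.
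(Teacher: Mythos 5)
First, a point of comparison: the paper does not prove this lemma at all --- it is quoted from Carleson's book (the citation to pp.~19--20 of \cite{Carleson1967selected}) --- so your proposal has to be judged on its own. Your first inequality, $\sup_{\mu\in\Gamma_E}\mu(E)\le C_K(E)$, is correct and complete: normalizing to $\nu=\mu(E)^{-1}\mu$ and using $\int_E u_\mu\,\d\mu\le\mu(E)$ is exactly the standard argument, and this is the direction the paper actually invokes in Lemma~\ref{lem-car}(ii).

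The reverse direction, however, contains a genuine error at the decisive step. For the energy minimizer $\nu^*$ the Frostman variational conditions are: $u_{\nu^*}(x)\ge V$ for all $x\in E$ outside a set of $K$-capacity zero (perturbing along $(1-t)\nu^*+t\sigma$ gives $\int u_{\nu^*}\,\d\sigma\ge V$ for every finite-energy probability measure $\sigma$ on $E$, which is a \emph{lower} bound quasi-everywhere), and $u_{\nu^*}(x)\le V$ only for $x\in\mbox{supp}\,\nu^*$ (obtained from $\int u_{\nu^*}\,\d\nu^*=V$ together with the first condition and semicontinuity of the potential). You assert instead that the variational principle yields the upper bound $u_{\nu^*}\le V$ on $E$ minus a capacity-zero set. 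It does not: the set where the upper bound may fail is $E\setminus\mbox{supp}\,\nu^*$, which in general has \emph{positive} capacity, so the proposed ``capacitary cleanup'' (covering the exceptional set by an open set of small capacity) is aimed at the wrong set and cannot close the argument. Passing from the bound on $\mbox{supp}\,\nu^*$ to a bound on all of $E$ requires a maximum principle for the kernel (true for radially decreasing kernels with a doubling condition, hence for the log-type kernels used in this paper, but not for an arbitrary nonnegative kernel). Your reduction to compact sets has the same defect in this direction: a measure admissible for a compact $F\subset E$ satisfies $u_\mu\le 1$ only on $F$, not on the larger set $E$. The statement that the equilibrium-measure argument proves cleanly, with no maximum principle, is the variant in which the constraint $u_\mu\le 1$ is imposed only on $\mbox{supp}\,\mu$; an inspection of the proof of Lemma~\ref{lem-car}(i) shows that this weaker variant suffices there, since the potential is only ever evaluated at points of $B_j\cap\mbox{supp}\,\mu$. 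A minor additional remark: your lower-semicontinuity argument for the energy under weak-$*$ limits needs $K$ lower semicontinuous, whereas the lemma hypothesizes upper semicontinuity (this appears to be a slip in the paper itself, the kernels actually used being continuous off the diagonal).
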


\begin{lemma}\label{lem-equi-content}
Let $h$ be a gauge function. There exist positive constants $A_1$ and $A_2$, depending only on $d$, such that for every compact set $F\subset \R^d$, there is a Radon measure $\mu$ satisfying the following:

($i$)  $\mbox{supp} \mu\subset E$, and
\begin{align*}
\mu(F)\geq A_1c_h(F).
\end{align*}

($ii$)
For every ball $B\subset \R^d$  with diameter $d(B)\leq r$,
$$
\mu(B\cap F)\leq A_2h(r).
$$
\end{lemma}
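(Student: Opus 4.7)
The plan is to prove this Frostman-type lemma through a classical dyadic construction, following Kametani--Carleson adapted to a general gauge function. Decompose $\R^d$ into the standard family $\mathcal{D}=\bigcup_{k\geq 0}\mathcal{D}_k$ of half-open dyadic cubes, where $Q\in\mathcal{D}_k$ has side $2^{-k}$ and diameter $d(Q)=\sqrt{d}\,2^{-k}$. By compactness of $F$ only cubes meeting a fixed bounded region need be considered. For each large integer $N$ I would build an auxiliary measure $\mu_N$ with $\mathrm{supp}\,\mu_N\subset F$ as follows: on every $Q\in\mathcal{D}_N$ with $Q\cap F\neq\emptyset$, first distribute a total mass $h(d(Q))$ uniformly on $Q\cap F$ (with respect to any fixed background probability). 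Then sweep upward: for $j=N-1,N-2,\dots,0$ and each $Q'\in\mathcal{D}_j$, if the current mass $\mu_N(Q')$ exceeds $h(d(Q'))$, rescale $\mu_N|_{Q'}$ by the scalar $h(d(Q'))/\mu_N(Q')$. After this procedure the cube inequality $\mu_N(Q)\leq h(d(Q))$ holds for every dyadic $Q$ of level at most $N$.

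To bound $\mu_N(\R^d)$ from below in terms of $c_h(F)$, for each $x\in F$ let $Q(x)$ denote the largest dyadic ancestor of the level-$N$ cube containing $x$ for which the saturation $\mu_N(Q(x))=h(d(Q(x)))$ persists after the full sweep. Such a cube exists because the initial level-$N$ cube containing $x$ starts saturated, and any loss of saturation at level $j$ is caused by rescaling at some ancestor of level $j-1$, which in turn forces saturation at that ancestor. The maximal $Q(x)$ thus form a disjoint cover $\{Q_i\}$ of $F$ by saturated cubes, so
\[
c_h(F)\leq \sum_i h(d(B_i))=\sum_i h(d(Q_i))=\sum_i \mu_N(Q_i)\leq \mu_N(\R^d),
\]
where $B_i$ is the open ball circumscribing $Q_i$ (same diameter), yielding an admissible cover in the definition of $c_h$. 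Extracting a weak-$*$ convergent subsequence $\mu_{N_j}\rightharpoonup \mu$ via Banach--Alaoglu and the uniform total-mass bound produces a Radon measure with $\mathrm{supp}\,\mu\subset F$, $\mu(F)\geq A_1 c_h(F)$, and the inherited dyadic-cube bound $\mu(Q)\leq h(d(Q))$.

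For part (ii), given a ball $B$ with $d(B)\leq r$, I would choose $k$ so that $2^{-k}\in(r/2,r]$; then $B$ intersects at most $3^d$ cubes of $\mathcal{D}_k$, each of diameter at most $\sqrt{d}\,r$, so $\mu(B\cap F)\leq 3^d h(\sqrt{d}\,r)\leq A_2 h(r)$, the last step absorbing a dimensional factor via the mild doubling inherent to the gauge functions considered in this paper. The principal obstacle is the bookkeeping in the upward sweep: one must carefully verify that every point of $F$ lies in a maximal saturated cube $Q_i$ and that the resulting disjoint family constitutes an admissible ball cover for $c_h(F)$. This argument is classical in the case $h(t)=t^s$ (see \cite{Carleson1967selected,Kametani1945on,Mattila1995geometry}), and extends to a general gauge function with no essential modification, as the sweep uses only the upward monotonicity of the assignment $Q\mapsto h(d(Q))$ along the dyadic tree together with the comparison between dyadic-cube covers and ball covers, valid up to a constant depending only on $d$.
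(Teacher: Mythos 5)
Your construction is the classical Frostman/Carleson dyadic argument, which is exactly what the paper relies on: the paper gives no proof of this lemma and simply cites Carleson \cite{Carleson1967selected} and Kametani \cite{Kametani1945on}, whose argument is the dyadic mass-distribution-and-sweep you describe. The core of your outline is sound: the upward sweep preserves the cap $\mu_N(Q)\le h(d(Q))$ at all levels $\le N$, every point of $F$ ends up in a saturated cube (since de-saturation of a cube is always caused by a rescaling that saturates an ancestor, and the chain terminates at level $0$), the maximal saturated cubes are disjoint and give an admissible cover yielding $\mu_N(\R^d)\ge c_h(F)$ (up to an $\varepsilon$-enlargement of the circumscribing balls, absorbed by continuity of $h$), and weak-$*$ compactness produces the limit measure.

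The one genuine slip is in part (ii). You choose the dyadic level with side $2^{-k}\in(r/2,r]$, so the covering cubes have diameter $\sqrt{d}\,2^{-k}$, which exceeds $r$ for $d\ge 2$; you then need $h(\sqrt{d}\,r)\lesssim h(r)$ and appeal to ``mild doubling inherent to the gauge functions considered in this paper.'' But the lemma is stated for an arbitrary gauge function, and a general gauge function need not be doubling (e.g.\ $h(t)=e^{-1/t}$), so as written your constant $A_2$ would not depend only on $d$. The fix is to choose the level the other way: take $k$ with $\sqrt{d}\,2^{-k}\le r<\sqrt{d}\,2^{-k+1}$, so that each level-$k$ cube has \emph{diameter} at most $r$ and contributes at most $h(d(Q))\le h(r)$ by monotonicity of $h$ alone; the ball $B$ still meets at most a dimensional number of such cubes, so $A_2=A_2(d)$ with no doubling hypothesis. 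With that adjustment (and the routine handling of cube boundaries versus open balls in the weak-$*$ limit via continuity of $h$), your proof is complete and coincides with the cited classical one.
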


The main result of this subsection is as follows:

\begin{lemma}\label{lem-car}
The following conclusions are true:
\begin{itemize}
    \item [(i)] Let $1/K$ be a gauge function. Write $h:=1/K$. Then  for every bounded Borel set $E\subset \R^d$,
   \begin{align}\label{4.15-w-10-12}
    c_h(E)\geq \frac{1}{2}C_K(E).
   \end{align}
      \item [(ii)]
      Let $h$ and $1/K$ be two gauge functions. Assume that $h$ and $K$ are absolutely continuous such that
\begin{align}\label{4.16-w-10-12}
 \int^{\hat{d}}_0 K(t)\d h(t)<\infty
  \end{align}
  holds for some $\hat{d}>0$.
      Then there exists a constant $A>0$, depending only on $d$, $\hat d$, $h$ and $K$, such that when  $E\subset\R^d$ is a compact set, with $d(E)\leq\hat{d}$ (where $d(E):=\sup\{|x-y|\;:\;x, y\in E$),
 $$
C_K(E)\geq Ac_h(E).
$$
\end{itemize}
\end{lemma}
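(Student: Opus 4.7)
The plan for part (i) is the standard Frostman-type pairing between potentials and coverings, and in fact yields the cleaner bound $C_K(E) \leq c_h(E)$, which immediately implies \eqref{4.15-w-10-12}. Let $\mu \in \Gamma_E$ be any measure admissible in the sense of Lemma \ref{lem-equi-cap}, and let $\{B_j\}$ be any open-ball cover of $E$. For each $B_j$ meeting $E$, select $x_j \in B_j \cap E$. Since $K = 1/h$ is decreasing and $|x_j - y| < d(B_j)$ for every $y \in B_j$,
\[
K(d(B_j))\, \mu(B_j \cap E) \leq \int_{B_j} K(|x_j - y|)\, d\mu(y) \leq u_\mu(x_j) \leq 1,
\]
so $\mu(B_j \cap E) \leq h(d(B_j))$. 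Summing gives $\mu(E) \leq \sum_j h(d(B_j))$; infimizing over covers and supremizing over $\mu$ via Lemma \ref{lem-equi-cap} finishes the argument.

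For part (ii) the plan is to exhibit a single measure whose potential is bounded uniformly on $E$. The starting point is a Radon measure $\mu$ supplied by Lemma \ref{lem-equi-content}, so that $\mu(E) \geq A_1 c_h(E)$ and $\mu(B(x,r) \cap E) \leq A_2 h(2r)$ for every $x$ and $r$. Fixing $x \in E$ and setting $\phi_x(t) := \mu(B(x,t) \cap E)$, the potential becomes the Stieltjes integral $u_\mu(x) = \int_0^{\hat d} K(t)\, d\phi_x(t)$, since $d(E) \leq \hat d$. Integration by parts, legitimate because of the assumed absolute continuity of $K$ and $h$, yields
\[
u_\mu(x) = K(\hat d)\phi_x(\hat d) - \lim_{\epsilon \to 0^+} K(\epsilon)\phi_x(\epsilon) + \int_0^{\hat d} \phi_x(t)\,(-dK(t)).
\]
The limit at $0$ is nonnegative and hence may be discarded from an upper bound; the endpoint term at $\hat d$ is controlled by $K(\hat d)\, A_2 h(2\hat d)$, a constant depending only on $\hat d, h, K$; and the remaining integral is bounded by $A_2 \int_0^{\hat d} h(2t)\,(-dK(t))$, which, after another integration by parts and a change of variable, reduces to a constant multiple of $\int_0^{2\hat d} K(s)\,dh(s)$, finite by hypothesis \eqref{4.16-w-10-12}. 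Putting these estimates together produces a constant $M = M(d, \hat d, h, K)$ with $u_\mu(x) \leq M$ for all $x \in E$, whence $\tilde\mu := \mu/M$ lies in $\Gamma_E$ and Lemma \ref{lem-equi-cap} gives $C_K(E) \geq \tilde\mu(E) \geq (A_1/M)\, c_h(E)$. One may therefore take $A = A_1/M$.

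The main obstacle is the control of the integration by parts near $t = 0$: the factor $K(t) = 1/h(t)$ blows up, and only the growth bound $\phi_x(t) \leq A_2 h(2t)$ together with the integrability assumption \eqref{4.16-w-10-12} prevents the boundary term and the remaining integral from diverging. Making this rigorous requires, implicitly, that the ratio $h(2t)/h(t)$ stay bounded as $t \to 0^+$, which for the gauges $h_{\alpha,\beta}$ used in this paper tends to $1$, so the doubling property is available for free. This step also dictates how the constant $A$ degenerates as the integrability hypothesis approaches the borderline.
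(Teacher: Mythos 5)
Your proposal is correct and follows essentially the same route as the paper: part (i) pairs an admissible measure from Lemma \ref{lem-equi-cap} against an arbitrary ball cover using the monotonicity of $K$ (your version even yields the cleaner bound $C_K(E)\leq c_h(E)$), and part (ii) takes the Frostman-type measure from Lemma \ref{lem-equi-content}, bounds its potential by Stieltjes integration by parts using \eqref{4.16-w-10-12}, and normalizes. The only divergence is bookkeeping: you carry the radius-versus-diameter factor (hence $h(2t)$ and the doubling remark), whereas the paper writes $\phi(r)\leq A_2h(r)$ and instead explicitly verifies $\lim_{r\to0^+}h(r)K(r)=0$ to control the boundary terms in its double integration by parts.
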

\begin{proof}
 (i) We arbitrarily fix a bounded Borel set $E$  in $\R^d$.
 Without loss  of generality, we can assume that
  $C_K(E)>0$.
  According to  Lemma \ref{lem-equi-cap}, there exists a Radon measure $\mu$, with $\mbox{supp} \mu\subset E$, such that
\begin{align}\label{equ-relate-1}
  u_\mu(x):=\int_{\R^d} K(|x-y|)\d \mu(y)\leq 1, \quad x\in E;
\end{align}
\begin{align}\label{equ-relate-2}
\mu(E)\geq \frac{1}{2}C_K(E).
\end{align}
We arbitrarily take open balls  $B_j$, $j=1,2,\dots$ such that  $E\subset \cup B_j$. Then by \eqref{equ-relate-1}, and noting
that $K$ is a decreasing function, we have
\begin{align*}
1\geq \int_{B_j} K(|x-y|)\d \mu(y)\geq  {K(d(B_j))}\mu(B_j),\;\;j=1,2,\dots
\end{align*}
which yields
\begin{align*}
1/{K(d(B_j))}\geq \mu(B_j),\;\;j=1,2,\dots.
\end{align*}
Taking the sum for all $j$ in the above,
 using \eqref{equ-relate-2}, we find
$$
\sum_{j}1/{K(d(B_j))}\geq \sum_j \mu(B_j)\geq \mu(E)\geq \frac{1}{2}C_K(E).
$$
With the definition of $c_h$ (with $h=1/K$), the above leads to
\eqref{4.15-w-10-12}.

(ii) We arbitrarily fix $\hat{d}>0$ and a compact set $E\subset\R^d$, with $d(E)\leq\hat{d}$. Without loss of generality, we can assume that
 $c_h(E)>0$. Then by
 Lemma \ref{lem-equi-content}, we can find  a Radon measure $\mu$, with $\mbox{supp } \mu\subset E$,  such that
 there are two positive constants  $A_1$ and $A_2$, depending only on  $d$, such that
\begin{align}\label{equ-relate-3}
    \mu(E)\geq A_1c_h(E)
\end{align}
and such that
\begin{align}\label{equ-relate-4}
\mu(B\cap E)\leq A_2 h(r)\;\;\mbox{for each ball}\; B\subset\R^d,\;\mbox{with}\; d(B)\leq r.
\end{align}
Let $x_0\in E$. We define
\begin{align}\label{4.21-w-10-12}
\phi(r)=\mu(\{x\in E\;:\;|x-x_0|<r\}).
\end{align}
From \eqref{4.21-w-10-12} and
\eqref{equ-relate-4}, we see
\begin{align}\label{4.22-w-10-12}
\phi(r)\leq A_2h(r).
\end{align}
Meanwhile, two facts are as follows: First,  since $K$ is strictly decreasing, we have
\begin{align}\label{4.23-w-10-12}
K'(r)\le 0
\end{align}
almost everywhere.
Second, we have
\begin{align}\label{5.24-w-f-10-22}
\lim_{r\rightarrow 0^+}\limits\Big((A_2h(r)-\phi(r))K(r)\Big)=0.
\end{align}
Indeed, by \eqref{4.22-w-10-12}, we see that to show  \eqref{5.24-w-f-10-22}, it suffices to prove
\begin{align}\label{equ-10-12-ming}
\lim_{r\rightarrow 0^+}\limits\Big( h(r) K(r)\Big)=0.
\end{align}
However, it follows from \eqref{4.16-w-10-12} that  for each $\varepsilon>0$, there exists $\delta>0$ such that when $\delta'\in(0,\delta)$,
$$
\varepsilon>\int_{\delta'}^\delta K(t)\d h(t)\geq K(\delta)\int_{\delta'}^\delta  \d h(t)= K(\delta)(h(\delta)-h(\delta'))>0.
$$
Letting $\delta'\to 0$ in the above inequality leads to  \eqref{equ-10-12-ming}.

Now, it follows from \eqref{4.21-w-10-12}, \eqref{4.22-w-10-12}, \eqref{4.23-w-10-12},
\eqref{5.24-w-f-10-22} and the absolute continuity of $h, K$ that
\begin{align}\label{4.24-w-10-12}
 u_\mu(x_0)&:=\int_{\R^d} K(|x_0-y|)\d \mu(y)=\int_0^{d(E)}K(r)\d \phi(r)\leq \int_0^{\hat d}K(r)\d \phi(r)     \nonumber\\
 &=K(\hat d)\phi(\hat d)-\lim_{r\rightarrow 0^+}(\phi(r)K(r))-\int_0^{\hat d}\phi(r)\d K(r)\nonumber\\
 &\leq K(\hat d)\phi(\hat d)-\lim_{r\rightarrow 0^+}(\phi(r)K(r))-A_2\int_0^{\hat d}h(r)\d K(r)\nonumber\\
 &=K(\hat d)(\phi(\hat d)-A_2h(\hat d)K(\hat d)
+ \lim_{r\rightarrow 0^+}\Big((A_2h(r)-\phi(r))K(r)\Big)
 +A_2\int_0^{\hat d}K(r)\d h(r)\nonumber\\
&=K(\hat d)(\phi(\hat d)-A_2h(\hat d)K(\hat d)
 +A_2\int_0^{\hat d}K(r)\d h(r):=M.
\end{align}
Clearly, the above constant $M$   depends only on $d,\hat{d}$, $h$ and $K$.

Let $\Tilde{\mu}=M^{-1}\mu$ (which is clearly a Radon measure supported on $E$).
By \eqref{4.24-w-10-12}, we see
$$
u_{\Tilde{\mu}}(x)\leq 1 \quad \mbox{ for all }x\; \in E.
$$
Thus we can apply Lemma \ref{lem-equi-cap} to $\Tilde{\mu}$ and use \eqref{equ-relate-3} to get
$$
C_K(E)\geq \Tilde{\mu}(E)\geq A_1M^{-1}c_h(E).
$$
This completes the proof.
\end{proof}

\begin{remark}
We mention that a qualitative version of Lemma \ref{lem-car} was proved for analytic sets, see \cite{Carleson1967selected}. Note that every Borel set is an analytic set, but
the converse is not true (see \cite{Dasgupta2014set}).
\end{remark}

\begin{remark}\label{rem-gap}
The connection between the $h$-Hausdorff content and the $K$-capacity in Lemma \ref{lem-car} is optimal in the following sense:
There is a bounded closed set $E$ such that $c_h(E)=0$ and $C_K(E)>0$ when $\liminf_{t\to 0^+}h(t)K(t)=0$. Conversely, there is a compact set $E$ such that $c_h(E)>0$ and $C_K(E)=0$, when $\int_0^1K(t)\d h(t)$ diverges (see \cite{Taylor1961on}).
From the above, we see that there is a ‘gap of uncertainty’ in deductions in either direction, and thus
these two scales  measuring sets are fundamentally different.
\end{remark}

The next lemma is a consequence of  Lemma \ref{lem-car} which  connects the Hausdorff contents and the capacity
induced by  the gauge function $F_{\alpha,\beta}$ given by \eqref{equ-intro-F}.

\begin{lemma}\label{lem-connect}
Let $F_{\alpha,\beta}$ (with $\alpha, \beta\geq 0$) given by \eqref{equ-intro-F}.
 Let $E\subset\R^d$ be a compact subset. Then the following assertions are true:
\begin{itemize}
    \item [(i)] If  $C_{F^{-1}_{\alpha,\beta}}(E)>0$, then
    $ c_{F_{\alpha,\beta}}(E)\geq \frac{1}{2}C_{F^{-1}_{\alpha,\beta}}>0.$
    \item [(ii)] Let  $\varepsilon>0$. If  $c_{F_{\alpha,\beta}}(E)>0$ for some  $\beta\geq 1$ and $\alpha\geq 1+\varepsilon$, then there exists a constant $A>0$, depending only on $d,\alpha,\beta,\varepsilon$ and  $d(E)$, such that
   \begin{align}\label{4.25-w-10-12}
    C_{F^{-1}_{\alpha-1-\varepsilon,\beta-1}}(E)\geq Ac_{F_{\alpha,\beta}}(E)>0.
   \end{align}
\end{itemize}
\end{lemma}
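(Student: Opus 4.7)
The plan is to derive both conclusions directly from Lemma~\ref{lem-car}, with suitable choices of gauge function $h$ and kernel $K$. For part (i), I would set $h = F_{\alpha,\beta}$ and $K = F^{-1}_{\alpha,\beta}$; since $d \geq 2$ and $\alpha,\beta \geq 0$, the function $F_{\alpha,\beta}$ is strictly increasing, continuous on $(0,e^{-3}]$, and vanishes at $0$, hence is a gauge function (extended to all of $(0,\infty)$ as in Remark~\ref{remark2.2-w-9-29}). The bound $c_{F_{\alpha,\beta}}(E) \geq \tfrac{1}{2} C_{F^{-1}_{\alpha,\beta}}(E)$ is then a direct application of Lemma~\ref{lem-car}(i), and the strict positivity $c_{F_{\alpha,\beta}}(E)>0$ follows from the hypothesis $C_{F^{-1}_{\alpha,\beta}}(E)>0$.

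For part (ii), I would apply Lemma~\ref{lem-car}(ii) with $h = F_{\alpha,\beta}$ and $K = F^{-1}_{\alpha-1-\varepsilon,\beta-1}$. Under the hypotheses $\beta \geq 1$ and $\alpha \geq 1+\varepsilon$, both $h$ and $1/K = F_{\alpha-1-\varepsilon,\beta-1}$ are gauge functions and are absolutely continuous on the interior of their domains. Taking $\hat d = d(E)$, the conclusion then reduces to verifying the integrability condition \eqref{4.16-w-10-12}, that is,
$$\int_0^{\hat d} K(t)\, dh(t) < \infty.$$

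The main computational step is then to establish this convergence. Differentiating $h(t) = t^{d-1}(\log\tfrac{1}{t})^{-\beta}(\log\log\tfrac{1}{t})^{-\alpha}$, one finds that the dominant term of $h'(t)$ near $t=0$ is $(d-1)\,t^{d-2}(\log\tfrac{1}{t})^{-\beta}(\log\log\tfrac{1}{t})^{-\alpha}$, so that
$$K(t)\, h'(t) \;\sim\; (d-1)\, t^{-1}\,(\log\tfrac{1}{t})^{-1}\,(\log\log\tfrac{1}{t})^{-1-\varepsilon}$$
as $t\to 0^+$. The substitutions $u = \log\tfrac{1}{t}$ followed by $v = \log u$ reduce the relevant integral to $\int^\infty v^{-1-\varepsilon}\,dv$, which converges precisely because $\varepsilon > 0$. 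The two remaining contributions to $h'(t)$, coming from differentiating the $\log$ and $\log\log$ factors, yield integrands smaller by additional powers of $(\log\tfrac{1}{t})^{-1}$ or $(\log\log\tfrac{1}{t})^{-1}$ and are handled in the same way.

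I do not foresee any substantial obstacle: both parts reduce to mechanical applications of Lemma~\ref{lem-car}, and the arithmetic in the gauge-function verification and in the convergence check is routine. The genuine content of part (ii) lies in recognizing that the precise shift $(\alpha,\beta)\mapsto(\alpha-1-\varepsilon,\beta-1)$ is exactly what the integrability condition \eqref{4.16-w-10-12} can tolerate, with the strictly positive $\varepsilon$ providing the decisive convergence at the iterated-logarithm scale; this explains both why the hypotheses $\beta\geq 1$, $\alpha\geq 1+\varepsilon$ appear and why $\varepsilon$ cannot be taken to be zero.
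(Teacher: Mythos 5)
Your proposal is correct and follows essentially the same route as the paper: part (i) is a direct application of Lemma~\ref{lem-car}(i) with $h=F_{\alpha,\beta}$, and part (ii) applies Lemma~\ref{lem-car}(ii) with $h=F_{\alpha,\beta}$, $K=1/F_{\alpha-1-\varepsilon,\beta-1}$, reducing to the integrability check \eqref{4.16-w-10-12}, where the dominant term $K(t)h'(t)\sim t^{-1}(\log\tfrac{1}{t})^{-1}(\log\log\tfrac{1}{t})^{-1-\varepsilon}$ converges exactly as you describe. The paper's computation of $h'$ and the resulting integral $\int \frac{dt}{t\log\frac{1}{t}(\log\log\frac{1}{t})^{1+\varepsilon}}$ matches yours.
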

\begin{proof}
  The assertion $(i)$  is a direct consequence of
  the conclusion $(i)$ in Lemma \ref{lem-car}, since  $F_{\alpha,\beta}$ is a gauge function.

   We now show the assertion $(ii)$. Let $E\subset\R^d$ be a compact set such that $c_{F_{\alpha,\beta}}(E)>0$ with some $\beta \geq 1, \alpha\geq 1+\varepsilon$. The application of the conclusion $(ii)$ in  Lemma \ref{lem-connect} (with
   $h=F_{\alpha,\beta}$ and $K=1/F_{\alpha-1-\varepsilon,\beta-1}$) leads to \eqref{4.25-w-10-12},
   provided that
 \begin{align}\label{equ-connect-1}
 \int_0^{d(E)} K(t)h'(t)\d t<\infty.
 \end{align}
Thus, it remains to check  \eqref{equ-connect-1}.
Indeed, a  direct computation shows that
\begin{align*}
 h'(t) &=(d-1)t^{d-2}(\log \frac{1}{t})^{-\beta}(\log\log\frac{1}{t})^{-\alpha}+\beta t^{d-2} (\log \frac{1}{t})^{-\beta-1}(\log\log\frac{1}{t})^{-\alpha}\\
  &\quad +\alpha t^{d-2} (\log \frac{1}{t})^{-\beta-1}(\log\log\frac{1}{t})^{-\alpha-1}\\
  &=\mathcal{O}(t^{d-2}(\log \frac{1}{t})^{-\beta}(\log\log\frac{1}{t})^{-\alpha}), \quad 0<t<t_0
\end{align*}
with some $t_0$ small, which yields
\begin{align*}
 \int_0^{d(E)} K(t)h'(t)\d t\leq C+   C\int_0^{\min\{t_0,1/100\}}\frac{1}{t\log\frac{1}{t}(\log\log \frac{1}{t})^{1+\varepsilon}} \d t<\infty,
\end{align*}
i.e., \eqref{equ-connect-1} holds.
 Thus the proof is completed.
\end{proof}

\subsection{Propagation of smallness  for analytic functions in $d$-dim}\

In this subsection, we shall establish a propagation of smallness of analytic functions from sets of positive log-Hausdorff content in the $d$-dim case.
 In the sequel, we use $\mathbb{D}_r(a)$ to denote  the closed  disc in $\C^d$, centered at $a\in \C^d$ with radius $r>0$, and $B_r(a)$ to denote  the open ball in $\R^d$, centered at $a\in\R^d$ with radius $r$.
 When  the center $a$ is not important, we shall write $B_r$ instead of $B_r(a)$ for short.

 The following lemma is a $d$-dim version of  \eqref{2.43-w-10-3} (in Remark \ref{rem-Kov}). It will be used for our purpose.
\begin{lemma}\label{lem-ana-high-1}
Let $r>0$ and $B_{r/2}$ be a ball in $\R^d$ such that $0\in B_{r/2}$. Let $E\subset B_{r/2}$ be a subset of  positive Lebesgue measure $|E|$. Then there is a constant $C>0$, depending only on $d$,  such that for every analytic function $\phi:\mathbb{D}_{6r}(0)\subset\C^d\to \C$,
\begin{align}\label{4.28-w-10-14}
\sup_{B_{r/2}}|\phi|\leq \left(\frac{Cr^d}{|E|}\right)^{\frac{\log M}{\log 2}}\sup_{E}|\phi|,
\end{align}
where
\begin{align}\label{4.29-w-10-15}
M=\sup_{z\in \mathbb{D}_{5r}(0)}|\phi(z)|/\sup_{x\in  B_{r/2}}|\phi(x)|.
\end{align}
\end{lemma}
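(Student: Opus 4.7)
The plan is to reduce to unit scale by homogeneity and then run Kovrijkine's polar-coordinate reduction: identify a line through a maximizer of $|\phi|$ along which the trace of $E$ captures a definite fraction of $|E|$, and invoke the one-dimensional Kovrijkine inequality recorded in Remark~\ref{rem-Kov}$(i)$ on that line. Setting $\psi(z)=\phi(rz)$, $\widetilde B_{1/2}:=r^{-1}B_{r/2}$, and $\widetilde E:=r^{-1}E\subset\widetilde B_{1/2}$, every quantity in \eqref{4.28-w-10-14} transforms homogeneously (in particular $|\widetilde E|=r^{-d}|E|$), so it suffices to establish the case $r=1$.

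Next, assuming $N:=\sup_{B_{1/2}}|\phi|>0$ (otherwise the estimate is vacuous), pick $x_0\in\overline{B_{1/2}}$ with $|\phi(x_0)|=N$; since $0\in B_{1/2}$, one has $|x_0|\le 1$. For each $v\in S^{d-1}$ the slice $\phi_v(t):=\phi(x_0+tv)$ is analytic on $D_5\subset\mathbb{C}$ because $|x_0+tv|\le|x_0|+|t|\le 6$ for $|t|\le 5$. Writing $y\in E\subset\overline{B_{1/2}}$ in polar coordinates centered at $x_0$ and using $|y-x_0|\le 1$, a Fubini computation yields
\[
|E|=\int_{S^{d-1}}\!\int_0^1\mathbf 1_E(x_0+sv)\,s^{d-1}\,ds\,d\sigma(v)\le\int_{S^{d-1}}|E_v^+|\,d\sigma(v),
\]
with $E_v^+:=\{s\in[0,1]:x_0+sv\in E\}$. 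Hence there exists $v_0\in S^{d-1}$ with $|E_{v_0}^+|\ge c_d|E|$, where $c_d:=|S^{d-1}|^{-1}$.

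Applying the one-dimensional inequality of Remark~\ref{rem-Kov}$(i)$ to $\widehat\phi(t):=\phi_{v_0}(t)/N$ on the unit interval $I=[0,1]$ with the subset $E_{v_0}^+$ (the normalization $|\widehat\phi(0)|=1$ is met since $\phi_{v_0}(0)=\phi(x_0)=N$), and multiplying through by $N$, produces
\[
\sup_I|\phi_{v_0}|\le\Bigl(\frac{C}{|E_{v_0}^+|}\Bigr)^{\log\widetilde M/\log 2}\sup_{E_{v_0}^+}|\phi_{v_0}|,\qquad \widetilde M:=\sup_{D_2}|\phi_{v_0}|/N.
\]
Three elementary observations close the proof: $\sup_I|\phi_{v_0}|\ge|\phi_{v_0}(0)|=N=\sup_{B_{1/2}}|\phi|$; the inclusion $\{x_0+tv_0:|t|\le 2\}\subset\mathbb{D}_3(0)\subset\mathbb{D}_5(0)$ together with $\sup_I|\phi_{v_0}|\ge N$ yields $\widetilde M\le M$; and $\sup_{E_{v_0}^+}|\phi_{v_0}|\le\sup_E|\phi|$. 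Combining these with $|E_{v_0}^+|\ge c_d|E|$ and replacing the exponent $\log\widetilde M/\log 2$ by the larger $\log M/\log 2$ (permissible once the constant $C$ is enlarged so that $C/|E|\ge 1$, which is automatic since $|E|\le|B_{1/2}|$) gives \eqref{4.28-w-10-14} after absorbing $c_d^{-1}$ into $C$.

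The only delicate point is the buffer between the domain of analyticity $\mathbb{D}_{6r}(0)$ and the disc $\mathbb{D}_{5r}(0)$ appearing in the definition of $M$: after translating to the center $x_0$ and restricting to a line, this buffer guarantees that the disc $D_2\subset\mathbb{C}$ on which $\widetilde M$ is measured sits inside $\mathbb{D}_{5r}(0)\subset\mathbb{C}^d$ uniformly in the position of $x_0\in\overline{B_{r/2}}$. Without it, the comparison $\widetilde M\le M$ would fail; with it, the $d$-dimensional inequality follows from the $1$-dimensional one by the purely geometric slicing argument above.
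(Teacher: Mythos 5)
Your proposal is correct and follows essentially the same route as the paper: rescale to $r=1$, pick a maximizer $x_0\in\overline{B_{1/2}}$, use polar coordinates centered at $x_0$ to find a direction whose ray captures a fraction $c_d|E|$ of the measure, and apply the one-dimensional Kovrijkine inequality \eqref{2.43-w-10-3} along that line, with the buffer $|x_0|\le 1$ guaranteeing $\mathbb{D}_2$ in the slice variable lands inside $\mathbb{D}_5(0)$ so that $\widetilde M\le M$. Your averaging step for selecting the direction and your explicit handling of the exponent monotonicity are, if anything, slightly cleaner than the paper's version of the same argument.
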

\begin{proof}
We arbitrarily fix $E\subset B_{r/2}$ with $|E|>0$, and an analytic function $\phi:\mathbb{D}_{6r}(0)\subset\C^d\to \C$.
Without loss of generality, we can assume that $r=1$. (For otherwise, we can replace
 $\phi(x)$ by $\phi(rx)$.)

Let $x_0\in \overline{B_{1/2}}$ be a point such that
\begin{align}\label{4.30-w-10-15}
|\phi(x_0)|=\max_{x\in \overline{B_{1/2}}}|\phi(x)|.
\end{align}

We claim that there is a line segment $l_0$ with $|l_0|=1$, passing through  $x_0$, such that
\begin{align}\label{4.31-w-10-15}
|E\cap l_0|\geq c|E|
\end{align}
 for some   constant $c>0$ depending only on $d$. (Here, $|E\cap l_0|$ and $|l_0|$ denote the $1$-dim Lebesgue measures of $|E\cap l_0|$ and $l_0$.)

 Indeed, using the polar coordinate centered at $x_0$, noting that    $|l\cap B_{1/2}(0)|\leq 1$ for each line  $l$ (in $\R^d$),
 we have
$$
|E|=\int \chi_E(x)\d x =\int^{1}_{0}\int_{|x-x_0|=\rho} \chi_E(x) \d \sigma\d  \rho=\int_0^{1}\rho^{d-1}\int_{\mathbb{S}^{d-1}}\chi_{E-x_0}(\rho\theta) \d \theta\d\rho,
$$
which yields
$$
|E|\leq \int_0^{1}\max_{\theta\in \mathbb{S}^{d-1}}\chi_{E-x_0}(\rho\theta) |\mathbb{S}^{d-1}|\d \rho.
$$
From the above, we can find a $\theta_0\in \mathbb{S}^{d-1}$ such that
$|(E-x_0)\cap\{\rho\theta_0: \rho\in[0,1]\}|\geq c|E|$.
Thus, the segment $l_0:=\{\rho\theta_0+x_0\;:\; \rho\in[0,1]\}$ is what we desired.

Next, by applying \eqref{2.43-w-10-3} (where $I$ and $E$ are replaced by $l_0$ and $E\cap l_0$), using the similar scaling argument to that used in the proof of Corollary \ref{cor-ana}, and noting \eqref{4.30-w-10-15} and \eqref{4.31-w-10-15}, we obtain
\begin{align}\label{4.32-w-10-14}
\sup_{l_0}|\phi|\leq \left(\frac{C}{|E\cap l_0|}\right)^{\frac{\log M_1}{\log 2}}\sup_{E\cap l_0}|\phi|,
\end{align}
where $M_1=\sup_{z\in \mathbb{D}_3(x_0)}|\phi(z)|/\sup_{x\in  l_0}|\phi(x)|$.

Meanwhile, since $0\in B_{1/2}$ and $x_0\in \overline{B_{1/2}}$,  we have $|x_0|\leq 2$, and thus $\mathbb{D}_3(x_0)\subset \mathbb{D}_5(0)$. Moreover, it folows from \eqref{4.30-w-10-15} that  $\sup_{l_0}|\phi|\geq \sup_{B_{1/2}}|\phi|$. Thus,
$$
M_1\leq \sup_{z\in \mathbb{D}_5(0)}|\phi(z)|/\sup_{B_{1/2}}|\phi|.
$$
%
This, along with \eqref{4.32-w-10-14} and the fact $\sup_{E\cap l_0}|\phi|\leq \sup_{E}|\phi|$, implies
\eqref{4.28-w-10-14}.

Thus, we complete the proof.
 \end{proof}

We will now present our result on the propagation of smallness for analytic functions, which can be viewed   as a $d$-dim extension of Corollary
\ref{cor-inter-820}. We recall that for each $a\in\R$, $a-$ denotes $a-\delta$ for $\delta>0$ small.
\begin{proposition}\label{prop-ana-high-2}
Let $B_1$ be a ball with radius $1$ such that $0\in B_1$. Let $E\subset B_1$ be a compact subset  with  $c_{F_{\alpha,\beta}}(E)>0$,  where $F_{\alpha,\beta}$
is given by \eqref{equ-intro-F} with $\alpha>1, \beta=3/2$.
Then there is a positive constant $C$, depending only on $d$, $\alpha$ and $c_{F_{\alpha,\beta}}(E)$, such that any analytic function
 $\phi:\mathbb{D}_{10}(0)\subset\C^d\to \C$ satisfies
\begin{align}\label{4.33-w-10-15}
\sup_{x\in B_{1}}|\phi(x)|\leq \varepsilon \sup_{z\in \mathbb{D}_{10}(0)}|\phi(z)|  +\exp\left\{\frac{C(\log \frac{1}{\varepsilon})^2 }{(\log\log \frac{1}{\varepsilon})^{2(\alpha-1-)/3}} \right\}\sup_{x\in E}|\phi(x)|
\end{align}
for each $\varepsilon>0$ small enough.
\end{proposition}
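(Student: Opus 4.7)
The plan is to prove the proposition by slicing: use the capacity-based slicing theorem (Lemma~\ref{lem-slicing}) to reduce the $d$-dimensional statement to the one-dimensional Corollary~\ref{cor-inter-820} on many good 1-dim fibers, and then use Lemma~\ref{lem-ana-high-1} to combine the fiberwise information into a bound on the full ball $B_1$.

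First, I would convert the Hausdorff-content hypothesis into a capacity hypothesis. Fix $\varepsilon_1>0$ small enough that $\alpha':=\alpha-1-\varepsilon_1>0$. Since $c_{F_{\alpha,3/2}}(E)>0$, Lemma~\ref{lem-connect}(ii) yields
\begin{equation*}
C_{F^{-1}_{\alpha',\,1/2}}(E)\;\ge\;A\,c_{F_{\alpha,3/2}}(E)\;>\;0,
\end{equation*}
with $A$ depending only on $d,\alpha,\varepsilon_1,d(E)$. Applying Lemma~\ref{lem-slicing} (after a harmless linear rescaling, since $E$ is compactly contained in $B_1$, so that the rescaled $E$ lies in a ball of radius $\le\tfrac12 e^{-3}$), I obtain a line $l\subset\R^d$ through the origin, constants $c,k>0$ depending only on $d$, and a set
\begin{equation*}
A_0\;:=\;\bigl\{a\in l^\bot\;:\;C_{h^{-1}_{\alpha',1/2}}(E\cap l_a)\ge\kappa\bigr\},
\end{equation*}
such that $\mathcal H^{d-1}(A_0)\ge c_0$, where $\kappa,c_0>0$ depend only on $d,\alpha,\varepsilon_1,c_{F_{\alpha,3/2}}(E)$. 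Then Lemma~\ref{lem-car}(i) (applied on each line $l_a$, on which the kernel $h^{-1}_{\alpha',1/2}$ matches $1/h_{\alpha'}$ from \eqref{1.7w9-21}) converts this capacity lower bound into the Hausdorff-content lower bound
\begin{equation*}
c_{h_{\alpha'}}(E\cap l_a)\;\ge\;\kappa/2\qquad\text{for every }a\in A_0.
\end{equation*}

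Next, on every such slice I would apply the one-dimensional propagation of smallness from Corollary~\ref{cor-inter-820} to $\phi\big|_{l_a}$ (after parametrising $l_a\cap B_1$ as a unit interval), obtaining, for each small $\varepsilon_0\in(0,e^{-3}]$,
\begin{equation*}
\sup_{l_a\cap B_1}|\phi|\;\le\;\varepsilon_0\sup_{l_a\cap \mathbb D_{10}(0)}|\phi|\;+\;\exp\!\Bigl\{\tfrac{C_1(\log 1/\varepsilon_0)^2}{(\log\log 1/\varepsilon_0)^{2\alpha'/3}}\Bigr\}\sup_{l_a\cap E}|\phi|,
\end{equation*}
with $C_1$ depending only on $\alpha',\kappa$. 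Setting $G:=\bigcup_{a\in A_0}(l_a\cap B_1)$, Fubini gives $|G|\ge c_1>0$ for a constant $c_1$ depending only on our data, and the displayed estimate integrates to
\begin{equation*}
\sup_{G}|\phi|\;\le\;\varepsilon_0\sup_{\mathbb D_{10}(0)}|\phi|\;+\;\Xi(\varepsilon_0)\sup_{E}|\phi|,\qquad \Xi(\varepsilon_0):=\exp\!\Bigl\{\tfrac{C_1(\log 1/\varepsilon_0)^2}{(\log\log 1/\varepsilon_0)^{2\alpha'/3}}\Bigr\}.
\end{equation*}
Finally, since $G\subset B_1$ has positive $d$-dim Lebesgue measure bounded below, Lemma~\ref{lem-ana-high-1} (applied to $\phi$ with observation set $G$ at an appropriate scale, using the room between $\mathbb D_5$ and $\mathbb D_{10}$) gives a bound of Kovrijkine type
\begin{equation*}
\sup_{B_1}|\phi|\;\le\;\bigl(C_2/|G|\bigr)^{\log M_0/\log 2}\sup_{G}|\phi|,\qquad M_0=\sup_{\mathbb D_{10}(0)}|\phi|/\sup_{B_1}|\phi|.
\end{equation*}

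Combining these two displays reduces the proposition to an algebraic optimization in $\varepsilon_0$. The main obstacle is precisely this balancing step: one has a polynomial-in-$M_0$ outer prefactor multiplying an inner sum with a linear term $\varepsilon_0 M_0\sup_{B_1}|\phi|$ and a sub-exponential term $\Xi(\varepsilon_0)\sup_{E}|\phi|$. The linear term has to be absorbed into the left-hand side, and the resulting implicit inequality $Y\le Z\exp\{-c\sqrt{\log(Z/X)\,(\log\log Z/X)^{2\alpha'/3}}\}$ must be iterated exactly as in the proof of Corollary~\ref{cor-inter-820}. Carrying out this iteration verbatim, with $\alpha$ replaced by $\alpha'=\alpha-1-\varepsilon_1$, produces the announced estimate \eqref{4.33-w-10-15} with exponent $2(\alpha-1-)/3$; since $\varepsilon_1>0$ is arbitrary, this yields the proposition.
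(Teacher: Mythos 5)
Your proposal follows essentially the same route as the paper's proof: transfer the $F_{\alpha,3/2}$-content hypothesis to a capacity via Lemma~\ref{lem-connect}(ii), slice via Lemma~\ref{lem-slicing}, return to $h_{\alpha-1-}$-content on the fibers via Lemma~\ref{lem-car}(i), apply Corollary~\ref{cor-inter-820} on each good fiber, and pass from the positive-measure union of good fibers to $B_1$ with Lemma~\ref{lem-ana-high-1}. The only cosmetic differences are that the paper localizes $E$ to a small sub-ball by subadditivity of the content rather than rescaling, and it closes the argument by applying Young's inequality to the multiplicative Kovrijkine bound and absorbing the resulting $\varepsilon^{-\delta}$ into the exponential term, rather than re-running the iteration from Corollary~\ref{cor-inter-820}.
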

\begin{proof}
We arbitrarily fix   $r\in(0,2^{-1}e^{-3}]$ and $\alpha>1$. Since
 $c_{F_{\alpha,\beta}}(E)>0$,
 it follows from the definition of the content that  there exists a ball $B_r$ (of radius $r$) such that \footnote{We emphasize that the lower bound of $c_{F_{\alpha,\beta}}(\Tilde{E})$ depends only on $c_{F_{\alpha,\beta}}(E)>0$, and in particular is independent of the shape of $E$. This observation is important in the proof of the uncertainty principle in Proposition \ref{prop-LS-high}.} $c_{F_{\alpha,\beta}}(\Tilde{E})>0$, where $\Tilde{E}:=E\cap \overline{B_r}$. Clearly,  $\Tilde{E}$ is a compact set.

  From conclusion  $(ii)$ in Lemma \ref{lem-connect}, we deduce that
  \begin{equation}\label{eq-1105-hh}
      C_{F^{-1}_{\alpha-1-,\beta-1}}(\Tilde{E})\geq Ac_{F_{\alpha,\beta}}(\Tilde{E})>0,
  \end{equation}
where $A>0$ is a constant, depending only on $d,\alpha$ and $r$.
  Then according to Lemma \ref{lem-slicing}, there exists a line $l$ in $\R^d$ such that
for a positive constant $c$, depending only on $d$ and $h_{\alpha,\beta}$ given by \eqref{equ-h},
\begin{align}\label{equ-ana-high-1}
 \mathcal{H}^{d-1}(\Upsilon)\geq \frac{c C_{F^{-1}_{\alpha-1-,\beta-1}}(\Tilde{E}) }{h_{\alpha-1-,\beta-1}(2r)},
\end{align}
where
\begin{align}\label{4.35-w-10-15}
\Upsilon:=\{a\in l^\bot: C_{h^{-1}_{\alpha-1-,\beta-1}}(\Tilde{E}\cap l_a)\geq kr^{-(d-1)}C_{F^{-1}_{\alpha-1-,\beta-1}}(\Tilde{E})\}.
\end{align}
(Here we recall that $l_a:=l+a$.)

Meanwhile,  by  Lemma \ref{lem-connect} $(i)$, \eqref{4.35-w-10-15} and \eqref{eq-1105-hh}, we see that for each $a\in \Upsilon$,
\begin{align}\label{4.36-w-10-15}
c_{h_{\alpha-1-,\beta-1}}(\Tilde{E}\cap l_a)\geq \frac{1}{2}kr^{-(d-1)}C_{F^{-1}_{\alpha-1-,\beta-1}}(\Tilde{E})\geq  \frac{A}{2}kr^{-(d-1)}c_{F_{\alpha,\beta}}(\Tilde{E})>0.
\end{align}

Now, we arbitrarily fix an analytic function  $\phi:\mathbb{D}_{10}(0)\subset\C^d\to \C$  and $\varepsilon>0$ small enough.
Since  $\beta=3/2$ and $\alpha>1$ and because of  \eqref{4.36-w-10-15}, we can apply Corollary \ref{cor-inter-820} to obtain
a positive constant $C$, depending only on $\alpha$ and $c_{F_{\alpha,\beta}}(\Tilde{E})$, such that for each $a\in \Upsilon$,
\begin{align}\label{equ-111-1}
\sup_{x\in l_a\cap B_r}|\phi(x)|\leq \varepsilon \sup_{z\in D_5(a')}|\phi(z)|+ \exp\left\{C\frac{(\log \frac{1}{\varepsilon})^2 }{(\log\log \frac{1}{\varepsilon})^{2(\alpha-1-)/3}} \right\}\sup_{x\in \Tilde{E}\cap l_a}|\phi(x)|
\end{align}
holds for each $a'\in l_a\cap B_r$. Since $B_r\cap E\neq \emptyset$ and $E\subset B_1$, we find $|a'|\leq 3$, and then $D_5(a')\subset D_{10}(0)$.
This, together with \eqref{equ-111-1}, yields
\begin{align}\label{equ-ana-high-2}
\sup_{x\in \cup_{a\in \Upsilon}l_a\cap B_r}|\phi(x)|\leq \varepsilon \sup_{z\in  D_{10}(0)}|\phi(z)|+ \exp\left\{C\frac{(\log \frac{1}{\varepsilon})^2 }{(\log\log \frac{1}{\varepsilon})^{2(\alpha-1-)/3}} \right\}\sup_{x\in E}|\phi(x)|.
\end{align}

On the other hand, by \eqref{equ-ana-high-1}, we have
$$
|\cup_{a\in \Upsilon}l_a\cap B_r|\gtrsim r\frac{c C_{F^{-1}_{\alpha-1-,\beta-1}}(\Tilde{E}) }{h_{\alpha-1-,\beta-1}(2r)} >0.
$$
Thus by Lemma \ref{lem-ana-high-1}, there is a constant $C_1>0$, depending only on $d, r,\alpha, C_{F^{-1}_{\alpha-1-,\beta-1}}(\Tilde{E})$, such that
$$
\sup_{B_{1}}|\phi|\leq C_1^{\frac{\log M}{\log 2}}\sup_{\cup_{a\in \Upsilon}l_a\cap B_r}|\phi|,
$$
where $M=\sup_{z\in \mathbb{D}_{10}(0)}|\phi(z)|/\sup_{x\in  B_{1}}|\phi(x)|$.
This can be rewritten as
$$
\sup_{B_{1}}|\phi|\leq M^{\delta}\sup_{\cup_{a\in \Upsilon}l_a\cap B_r}|\phi|
$$
with $\delta=\frac{\log C_1}{\log 2}$. It follows that
$$
\sup_{B_{1}}|\phi|\leq  \left(\sup_{z\in \mathbb{D}_{10}(0)}|\phi(z)|\right)^{\frac{\delta}{\delta+1}}\left(\sup_{\cup_{a\in \Upsilon}l_a\cap B_r}|\phi|\right)^{\frac{1}{\delta+1}}.
$$
By Young's inequality, this implies that
\begin{align}\label{equ-ana-high-3}
\sup_{B_{1}}|\phi|\leq \varepsilon \sup_{z\in \mathbb{D}_{10}(0)}|\phi(z)| + \varepsilon^{-\delta}\sup_{\cup_{a\in \Upsilon}l_a\cap B_r}|\phi|
\end{align}
where $\delta=\frac{\log C_1}{\log 2}$.

Combining \eqref{equ-ana-high-2} and \eqref{equ-ana-high-3} leads to
$$
\sup_{B_{1}}|\phi|\leq 2\varepsilon \sup_{z\in \mathbb{D}_{10}(0)}|\phi(z)| + \varepsilon^{-\delta}\exp\left\{\frac{C(\log \frac{1}{\varepsilon^{\delta+1}})^2 }{(\log\log \frac{1}{\varepsilon^{\delta+1}})^{2(\alpha-1-)/3}} \right\}\sup_{x\in E}|\phi(x)|.
$$
Absorbing $\varepsilon^{-\delta}$ by the exponential term
in the above inequality leads to \eqref{4.33-w-10-15}. This completes the proof.
\end{proof}

\subsection{Spectral inequality and uncertainty principle in $d$-dim}

This subsection devotes to presenting the $d$-dim versions of Lemma \ref{lem-spec} and  Lemma \ref{lem-LS}. We start with the
spectral inequality.

Let $\Omega\subset\R^d$ be a bounded  domain with a smooth boundary $\partial\Omega$. Denote by
$\{\lambda_k\}_{k\geq 1}$ and $\{\phi_k\}_{k\geq 1}$ the eigenvalues and the corresponding normalized eigenfunctions of
 $-\Delta$ with the domain $H^2(\Omega)\cap H_0^1(\Omega)$, namely
\begin{align}\label{equ-spec-high-1}
   -\Delta \phi_k = \lambda_k\phi_k, \quad k\geq 1.
\end{align}
Given $\lambda>0$, we define the linear spectral subspace $ \mathcal {E}_\lambda :=\mbox{Span}\{\phi_k \;:\; \lambda_k\leq\lambda\}$.

\begin{proposition}\label{prop-spec-high}
 Let $E\subset\Omega$ be a compact subset with  $c_{F_{\alpha,\beta}}(E)>0$,  where $F_{\alpha,\beta}$
 is given by \eqref{equ-intro-F}, with $\alpha>1$ and $\beta= \frac{3}{2}$.  Then there exists a constant $C>0$, depending only on $d, \Omega, \alpha$ and $c_{F_{\alpha,\beta}}(E)$, such that when $\lambda>0$,
 \begin{align}\label{4.40-w-g-s-10-15}
 \sup_{\Omega}|\phi|\leq C\exp\left\{ C\lambda (\log (\lambda+e))^{-\frac{2(\alpha-1-)}{3}}\right\}\sup_{E}|\phi|\;\;\mbox{for all}\;\;\phi\in \mathcal {E}_\lambda.
 \end{align}
 \end{proposition}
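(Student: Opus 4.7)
The plan is to mirror the one-dimensional argument of Lemma \ref{lem-spec}, using Proposition \ref{prop-ana-high-2} (the $d$-dimensional propagation of smallness for analytic functions on sets of positive log-type Hausdorff content) in place of Lemma \ref{lem-ana}. Two ingredients must be assembled: (a) a holomorphic extension of a spectral projection $\phi\in\mathcal{E}_\lambda$ with explicit $e^{C\sqrt{\lambda}}$-growth; and (b) a three-ball-type propagation that upgrades a bound on a small interior ball to a bound on $\Omega$ at the same exponential cost.

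First, since $E$ is compact in the open set $\Omega$ and $c_{F_{\alpha,\beta}}$ is countably subadditive, there exist a radius $r>0$ and a point $x_0$ with $\overline{B_{10r}(x_0)}\subset\Omega$ such that $c_{F_{\alpha,\beta}}(E\cap\overline{B_r(x_0)})\geq c_0>0$, where $r$ and $c_0$ depend only on $d$, $\Omega$ and $c_{F_{\alpha,\beta}}(E)$. I will then construct a holomorphic extension of $\phi=\sum_{\lambda_k\leq\lambda}c_k\phi_k$ by means of the elliptic lift $F(t,x):=\sum_{\lambda_k\le\lambda} c_k\cosh(\sqrt{\lambda_k}\,t)\phi_k(x)$, which solves $(\partial_t^2+\Delta)F=0$ on $\mathbb{R}\times\Omega$ with $F(0,\cdot)=\phi$. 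Standard interior elliptic estimates together with Cauchy's formula in the complex variable $t$ yield a holomorphic extension $\widetilde{\phi}$ of $\phi$ to $\mathbb{D}_{10r}(x_0)\subset\mathbb{C}^d$ satisfying
\begin{equation*}
\sup_{\mathbb{D}_{10r}(x_0)}|\widetilde{\phi}|\leq C e^{C\sqrt{\lambda}}\sup_{\Omega}|\phi|.
\end{equation*}

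Applying Proposition \ref{prop-ana-high-2} (after translating $x_0$ to $0$ and rescaling $B_r(x_0)$ to $B_1(0)$) to $\widetilde{\phi}$ with a free parameter $\varepsilon\in(0,e^{-3}]$ gives
\begin{equation*}
\sup_{B_r(x_0)}|\phi|\leq \varepsilon\, C e^{C\sqrt{\lambda}}\sup_{\Omega}|\phi|+\exp\Big\{\frac{C(\log\frac{1}{\varepsilon})^2}{(\log\log\frac{1}{\varepsilon})^{2(\alpha-1-)/3}}\Big\}\sup_{E}|\phi|.
\end{equation*}
A classical chain of three-ball inequalities for solutions of an elliptic equation, applicable to $F$ via Carleman estimates (or equivalently to $\phi$ itself after reflecting across the smooth Dirichlet boundary to extend its analyticity to a neighborhood of $\overline{\Omega}$), provides the companion estimate $\sup_{\Omega}|\phi|\leq e^{C\sqrt{\lambda}}\sup_{B_r(x_0)}|\phi|$. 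Combining the two displays and choosing $\varepsilon=e^{-A\sqrt{\lambda}}$ with $A$ large enough that the first term on the right absorbs into the left-hand side yields precisely \eqref{4.40-w-g-s-10-15}: then $(\log 1/\varepsilon)^2\sim\lambda$ and the dominant factor becomes $\exp\{C\lambda(\log(\lambda+e))^{-2(\alpha-1-)/3}\}$.

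I expect the principal obstacle to be the quantitative control in the three-ball propagation step, i.e.\ ensuring that the cumulative cost of passing from the interior ball $B_r(x_0)$ to all of $\Omega$ (including up to the smooth boundary) is only $e^{C\sqrt{\lambda}}$, so that it can be absorbed by the choice of $\varepsilon$. Both the analytic extension with the $e^{C\sqrt{\lambda}}$ bound and the three-ball chain with the same exponential rate are standard tools in the Lebeau--Robbiano paradigm; the essential novelty is the insertion of Proposition \ref{prop-ana-high-2} in place of the Logunov--Malinnikova or Bang-type estimates, which is what replaces the gauge $t^{d-1+\delta}$ by the finer gauge $F_{\alpha,\beta}$ on the observation side.
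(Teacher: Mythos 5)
Your proposal is correct and follows essentially the same route as the paper: a harmonic lift of $\phi$ to $\Omega\times\R$ yielding a holomorphic extension with $e^{C\sqrt{\lambda}}$ growth on an interior polydisc, an application of Proposition \ref{prop-ana-high-2} on a small interior ball, the classical spectral inequality $\sup_{\Omega}|\phi|\leq e^{C(\sqrt{\lambda}+1)}\sup_{B_{r}(x_0)}|\phi|$ (which is exactly the content of your three-ball/Carleman chain), and the absorption via $\varepsilon\sim e^{-C\sqrt{\lambda}}$. The only caution concerns your parenthetical alternative of reflecting across $\partial\Omega$ to make $\phi$ analytic up to the boundary: as Remark \ref{rem-1126} points out, eigenfunctions need not be analytic up to a merely smooth (non-analytic) boundary, so the primary route through the lifted elliptic solution is the one to keep.
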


\begin{proof}
We arbitrarily fix $\alpha>0$, $\lambda>0$ and $\phi\in \mathcal {E}_\lambda$.
 We write
 $$
 \phi(x) = \sum_{\lambda_k\leq \lambda} c_k\phi_k(x),\;\; x\in \Omega,
 $$
 where $c_k=\langle \phi,\phi_k \rangle_{L^2(\Omega)}$. Let
\begin{align}\label{equ-1126-1}
 \Phi(x,y)= \sum_{\lambda_k\leq \lambda} c_ke^{\sqrt{\lambda_k}y}\phi_k(x), \quad (x,y)\in \Omega\times \R.
\end{align}
Using \eqref{equ-spec-high-1} and \eqref{equ-1126-1}, one can check that
\begin{align}\label{equ-1126-2}
 \Delta_{x,y}\Phi= 0, \quad (x,y)\in \Omega\times \R.
\end{align}

Since  $E\subset\Omega$ is a compact subset with  $c_{F_{\alpha,\beta}}(E)>0$,  by the sub-additive property of the Hausdorff content, we can find $C_1>0$ and  an open  ball $B_{r}(x_0)$, such that $c_{F_{\alpha,\beta}}(B_r(x_0)\cap E)\geq C_1c_{F_{\alpha,\beta}}(E)>0$ and $B_{4r}(x_0)\subset \Omega$. Without loss of generality, we can assume that $x_0=0$.
(For otherwise, we can use the translation of coordinates.)
In light of \eqref{equ-1126-2}, we see that $\Phi$ is a harmonic function on $B_{4r}(0,0)\subset \Omega\times\R$.
According to the local analyticity of harmonic functions, see e.g. \cite[p.23, Theorem 2.10]{Trudinger}, we have
\begin{align}\label{equ-1126-3}
 \left\|\partial_x^\alpha \partial_y^\beta \Phi\right\|_{L^{\infty}\left(B_{r}(0,0)\right)} \leq
 \left( \frac{d(|\alpha|+\beta)}{r}\right)^{|\alpha|+\beta}\left\|\Phi\right\|_{L^{\infty}\left(B_{2r}(0,0)\right)}, \text { when } \alpha \in \mathbb{N}^d, \beta \geq 0.
\end{align}
  In particular, \eqref{equ-1126-3} implies that for all $\alpha\in \mathbb{N}^d$
\begin{align}\label{equ-1126-4}
 \left\|\partial_x^\alpha \phi\right\|_{L^{\infty}\left(B_{r}(0)\right)} \leq  \left( \frac{d|\alpha|}{r}\right)^{|\alpha|}\|\Phi\|_{L^\infty(B_{2r}(0,0))}\leq C_2\left( \frac{d|\alpha|}{r}\right)^{|\alpha|}\|\Phi\|_{L^2(B_{4r}(0,0))}
\end{align}
for some $C_2=C_2(d,r)>0$. Here in the last step we used the interior estimate of harmonic functions.
Meanwhile, since $B_{4r}(0)\subset \Omega$, it follows from  the orthonormality of $\{\phi_k: k\geq 1\}$ in $L^2(\Omega)$  and \eqref{equ-1126-1} that
\begin{align}\label{equ-1126-5}
\|\Phi\|_{L^2(B_{4r}(0,0))}\leq \|\Phi\|_{L^2(\Omega\times(-4r,4r))}\leq \sqrt{8r}e^{4r\sqrt{\lambda}}\|\phi\|_{L^2(\Omega)}\leq C_3e^{C_3\sqrt{\lambda}}\|\phi\|_{L^\infty(\Omega)}
\end{align}
with $C_3=C_3(r,|\Omega|)>0$. Combining \eqref{equ-1126-4}-\eqref{equ-1126-5}, we find that for some $C_4=C_4(d)>0$
\begin{align}\label{equ-1126-6}
 \left\|\partial_x^\alpha \phi\right\|_{L^{\infty}\left(B_{r}(0)\right)} \leq C_2C_3e^{C_3\sqrt{\lambda}}\frac{\alpha!C_4^{|\alpha|}}{r^{|\alpha|}}\|\phi\|_{L^\infty(\Omega)}, \text { when } \alpha \in \mathbb{N}^d.
\end{align}
Now, we introduce the scaling of $\phi$ as follows:
\begin{align}\label{equ-1126-7}
u(x)=\phi\Big(\frac{x}{K}\Big),\;\;x\in\Omega,
\end{align}
where $K=\max\{1, 20C_4/r\}$. Then by \eqref{equ-1126-7} and   \eqref{equ-1126-6}, we see that
$$
|\partial_x^\alpha u(0)|\leq C_2C_3e^{C_3\sqrt{\lambda}}\frac{\alpha!}{20^{|\alpha|}}\|\phi\|_{L^\infty(\Omega)}, \text { when } \alpha \in \mathbb{N}^d.
$$
With the Taylor formula for multi-variables, the above shows that $u$ is an analytic function on $\mathbb{D}_{10}(0)$ and satisfies
\begin{align}\label{equ-1126-8}
\sup_{z\in D_{10}(0)}|u(z)|\leq \sup_{z\in D_{10}(0)}\sum_{\mu\in \N^d}\frac{|\partial_x^\alpha u(0)||z|^{|\alpha|}}{\alpha!} \leq C_5e^{C_5\sqrt{\lambda}} \|\phi\|_{L^\infty(\Omega)},
\end{align}
where $C_5=C_5(d,r,|\Omega|)>0$.

Next, for each $\tau>0$, we let $\tau E:=\{\tau x: x\in E\}$. Since $K\geq 1$,  noting $\{KB_j\}$ is a covering of $KE\cap KB_R(0)$ if and only if $\{B_j\}$ is a covering of $E\cap B_r(0)$, using the definition of the Hausdorff content and the monotonicity of $F_{\alpha,\beta}$, we have
$$
c_{F_{\alpha,\beta}}(KE\cap KB_r(0))\geq c_{F_{\alpha,\beta}}(E\cap B_r(0))>0.
$$
Thus, we can apply Proposition \ref{prop-ana-high-2} (where $E$ and $\phi$ are replaced by $KE\cap KB_{r}(0)$ and  $u$) to
see  that when  $\varepsilon>0$ is small,
\begin{align}\label{equ-1126-9}
\sup_{x\in B_{1}(0)}|u(x)|\leq \varepsilon \sup_{z\in \mathbb{D}_{10}(0)}|u(z)|  +\exp\left\{\frac{C(\log \frac{1}{\varepsilon})^2 }{(\log\log \frac{1}{\varepsilon})^{2(\alpha-1-)/3}} \right\}\sup_{x\in KE\cap KB_{r}(0)}|u(x)|,
\end{align}
where $C>0$ depends only on $d,\alpha,c_{F_{\alpha,\beta}}(E)$.

Using \eqref{equ-1126-7} and \eqref{equ-1126-8}, we can rewrite \eqref{equ-1126-9} as
\begin{align}\label{equ-1126-10}
\sup_{x\in B_{1/K}(0)}|\phi(x)|&\leq \varepsilon \sup_{z\in \mathbb{D}_{10}(0)}|u(z)|  +\exp\left\{\frac{C(\log \frac{1}{\varepsilon})^2 }{(\log\log \frac{1}{\varepsilon})^{2(\alpha-1-)/3}} \right\}\sup_{x\in E\cap B_{r}(0)}|\phi(x)|\nonumber\\
&\leq \varepsilon C_5e^{C_5\sqrt{\lambda}}\|\phi\|_{L^\infty(\Omega)} +\exp\left\{\frac{C(\log \frac{1}{\varepsilon})^2 }{(\log\log \frac{1}{\varepsilon})^{2(\alpha-1-)/3}} \right\}\sup_{x\in E}|\phi(x)|.
\end{align}

Meanwhile, by the classical spectral inequality, we have
\begin{align}\label{equ-1126-11}
 \sup_{\Omega}|\phi|\leq e^{C_6(\sqrt{\lambda}+1)}    \sup_{B_{1/K}(0)}|\phi|.
\end{align}
Combining \eqref{equ-1126-10} and \eqref{equ-1126-11} leads to
\begin{align}\label{equ-1126-12}
\sup_{\Omega}|\phi|\leq   \varepsilon e^{C_6(\sqrt{\lambda}+1)}   C_5e^{C_5\sqrt{\lambda} } \|\phi\|_{L^\infty(\Omega)}  +e^{C_6(\sqrt{\lambda}+1)}  \exp\left\{\frac{C(\log \frac{1}{\varepsilon})^2 }{(\log\log \frac{1}{\varepsilon})^{2(\alpha-1-)/3}} \right\}\sup_{x\in E}|\phi(x)|.
\end{align}

Finally, by choosing $\varepsilon\sim e^{-C\sqrt{\lambda}}$   such that
$$
\varepsilon e^{C_6(\sqrt{\lambda}+1)}   C_5e^{C_5\sqrt{\lambda} } \|\phi\|_{L^\infty(\Omega)} \leq \frac{1}{2}\sup_{\Omega}|\phi|,
$$
we obtain \eqref{4.40-w-g-s-10-15} from \eqref{equ-1126-12}. This completes the proof.
\end{proof}

 \begin{remark}\label{rem-1126}
$(i)$ The the spectral inequality \eqref{equ-1126-11} still holds if $\Omega$ is assumed to be a Lipschitz  and locally star-shaped domain, see \cite{Apraiz2014obser}. Clearly, Proposition \ref{prop-spec-high} is valid for such domain $\Omega$.

$(ii)$ The proof of Proposition~\ref{prop-spec-high} differs slightly from that of Lemma~\ref{lem-spec}, since the eigenfunctions may not be analytic up to the boundary, unless the boundary $\partial\Omega$ is analytic \cite{Morrey}.
\end{remark}

Next we establish the $d$-dim version of  Lemma \ref{lem-LS}. Similar to Definition \ref{definition4.1-w-10-4}, we can define
the space $l^2L^\infty(\R^d)$, with the norm:
\begin{align*}
 \|u\|_{l^2L^\infty(\R^d)}=\|\{\sup_{x\in Q(k)}|u(x)\}_{k\in\Z^d}\|_{l^2(\Z^d)},\;\; u\in l^2L^\infty(\R^d),
  \end{align*}
where $Q(k)$ is the unit closed cube centered at $k\in \Z^d$.

\begin{proposition}\label{prop-LS-high}
Let $\gamma, L>0$ and $\alpha>1, \beta=\frac{3}{2}$.  Suppose that
 $E\subset\R^d$ is a closed  and $F_{\alpha,\beta}$$-$$(\gamma,L)$ thick set. Then there exists a constant $C$, depending only on $d,\alpha,\gamma,L$, such that when $u\in l^2L^\infty(\R^d)$, with $\emph{supp } \widehat{u}\subset B_N(0)$ for some $N>0$,
\begin{align}\label{4.47-w-w-g-g-10-17}
\|u\|_{l^2L^\infty(\R^d)}\leq C\exp\{CN^2(\log (N+e))^{-\frac{2(\alpha-1-)}
{3}}\} \big\|\{\sup_{E\cap Q(k)}|u|\}_{k\in\Z^d}\big\|_{l^2(\Z^d)}.
\end{align}
\end{proposition}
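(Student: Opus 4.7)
The strategy parallels the proof of Lemma \ref{lem-LS} (the $1$-dim version), with Proposition \ref{prop-ana-high-2} replacing the $1$-dim analytic propagation at the crucial step, and with unit cubes $Q(k)\subset\R^d$ playing the role of unit intervals. First I would establish a $d$-dim analogue of the Bernstein-type Lemma \ref{lem-bern}: for any $u\in l^2L^\infty(\R^d)$ with $\operatorname{supp}\widehat u\subset B_N(0)$ and any multi-index $\mu\in\N^d$,
$$
\|\partial^\mu u\|_{l^2L^\infty(\R^d)} \leq (CN)^{|\mu|}\|u\|_{l^2L^\infty(\R^d)}.
$$
This is proved exactly as Lemma \ref{lem-bern}, replacing the $1$-dim convolution kernel $\Psi(x) = CN^2(1+N|x|)^{-2}$ by its $d$-dim counterpart $CN^{d+1}(1+N|x|)^{-(d+1)}$ and applying Young's inequality in $L^\infty$ and $l^2$.

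Next I would classify unit cubes as good or bad. A cube $Q(k)$ is \emph{good} if $\|\partial^\mu u\|_{L^\infty(Q(k))}\le (AN)^{|\mu|}\|u\|_{L^\infty(Q(k))}$ for every $\mu\in\N^d$, where $A$ is chosen (depending only on $d$ and on the Bernstein constant) so that the combinatorial sum $\sum_{\mu\neq 0}(AN/(CN))^{-2|\mu|}$ is at most $1/2$. Exactly as in \eqref{eq-1118-h}, the bad-cube contribution satisfies
$$
\sum_{\text{bad}~Q(k)}\|u\|_{L^\infty(Q(k))}^2 \leq \sum_{\mu\neq 0}(AN)^{-2|\mu|}\|\partial^\mu u\|_{l^2L^\infty}^2 \leq \tfrac12\|u\|_{l^2L^\infty}^2,
$$
so that $\sum_{\text{good}}\|u\|_{L^\infty(Q(k))}^2\geq \tfrac12\|u\|_{l^2L^\infty}^2$.

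The key step is the pointwise estimate on a good cube. On a good $Q(k)$ the function $u$ extends to an entire function with the growth bound $|u(z-x)|\leq e^{AN\sqrt d\,|z-x|}\|u\|_{L^\infty(Q(k))}$ for each $x\in Q(k)$. Let $m_0=\lceil L\rceil$ and consider the cube $Q_{m_0}(k)$ of side $m_0$ containing $Q(k)$. Since $E$ is $F_{\alpha,\beta}$-$(\gamma,L)$ thick, Remark \ref{remark-content-thik-set}$(ii)$ gives $c_{F_{\alpha,\beta}}(E\cap Q_{m_0}(k))\gtrsim \gamma L^d$; subdividing $Q_{m_0}(k)$ into $(2m_0)^d$ sub-cubes of side $1/2$ and using sub-additivity of $c_{F_{\alpha,\beta}}$, I can select one such sub-cube $Q'$ with $c_{F_{\alpha,\beta}}(E\cap\overline{Q'})\geq \gamma_0:=c(\gamma,L,d)>0$. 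After translation we may assume $Q'\subset B_1(0)$ with $0\in B_1$. Applying Proposition \ref{prop-ana-high-2} to the (translated) analytic extension of $u$, with $\varepsilon=e^{-CN}$ chosen so that $\varepsilon\cdot e^{10AN\sqrt d}\leq 1/2$, absorbs the first term in \eqref{4.33-w-10-15} into the left-hand side and yields
$$
\sup_{B_1(0)}|u|\;\leq\; \exp\!\Bigl\{C\,N^2(\log(N+e))^{-2(\alpha-1-)/3}\Bigr\}\sup_{E\cap B_1(0)}|u|.
$$
Pulling $Q(k)$ back from $B_1(0)$ via Lemma \ref{lem-ana-high-1} (the $d$-dim Lebesgue-measure Kovrijkine estimate) only costs an extra factor of $(CL)^{CL\sqrt\lambda}$-type, harmlessly absorbed into the same exponential, producing $\sup_{Q(k)}|u|\leq \Xi_N\sup_{E\cap Q_{m_0+2}(k)}|u|$ with $\Xi_N=\exp\{C N^2(\log(N+e))^{-2(\alpha-1-)/3}\}$.

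Finally, squaring and summing over good cubes, noting that each $\sup_{E\cap Q(k')}|u|^2$ appears with multiplicity $O((m_0+2)^d)$ on the right, combined with the good-cube lower bound from Step~2, yields \eqref{4.47-w-w-g-g-10-17}. The principal obstacle is the third step: the thickness condition furnishes Hausdorff mass only at scale $L$, whereas Bernstein yields analytic control at unit scale and Proposition \ref{prop-ana-high-2} requires a scale-one ball. The subdivision must be arranged so that the rescaling of Proposition \ref{prop-ana-high-2} contributes only factors of the form $e^{CN}$, which are absorbed into the final double-log-refined exponential $\exp\{CN^2(\log(N+e))^{-2(\alpha-1-)/3}\}$, just as in Step~1 of the proof of Lemma \ref{lem-LS}.
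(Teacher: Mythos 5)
Your proposal is correct and follows essentially the same route as the paper: a $d$-dimensional Bernstein inequality, the good/bad cube decomposition, an application of Proposition \ref{prop-ana-high-2} on good cubes with $\varepsilon\sim e^{-CN}$ chosen to absorb the first term, and a final summation. The only difference is that the paper sketches the argument for $L=1$ and lets the localization to a small ball happen inside Proposition \ref{prop-ana-high-2} itself, whereas you handle general $L$ explicitly by subdividing $Q_{m_0}(k)$ and propagating back with Lemma \ref{lem-ana-high-1} — a harmless (and arguably more complete) elaboration of the same argument.
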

\begin{proof}
 Since the proof is very similar to that of  Lemma \ref{lem-LS}, we only sketch it  and consider the case $L=1$.
 We arbitrarily fix $N>0$ and  $u\in l^2L^\infty(\R^d)$, with $\mbox{supp } \widehat{u}\subset B_N(0)$.
 We call  cube $Q(k)$ a good cube, if for each multi-index $\mu\in \N^d$,
 \begin{align}\label{equ-LS-high-1}
  \|D^\mu u\|_{L^\infty(Q(k))}\leq (AN)^{|\mu|}\|u\|_{L^\infty(Q(k))}
 \end{align}
 where $A>0$ is a positive constant which will be determined later.
 When  $Q(k)$ is not good, we call it a bad cube.

 Using the Bernstein inequality\footnote{It remains valid in the $d$-dim case, with the proof being nearly identical to that of Lemma \ref{lem-bern}.}  in Lemma \ref{lem-bern}
 and following the same approach to \eqref{eq-1118-h}, one can  show that, when $A$ is sufficiently large, the contribution of bad cubes is small, namely
 $$
 \sum_{\mbox{ \footnotesize bad } Q(k)}(\sup_{Q(k)}|u|)^2
\leq \frac{1}{2}\|u\|^2_{l^2L^\infty(\R^d)}.
 $$
Thus, we have
  \begin{align}\label{equ-LS-high-1.5}
 \sum_{\mbox{ \footnotesize good } Q(k)}\|u\|^2_{L^\infty(Q(k))}\geq \frac{1}{2}\|u\|^2_{l^2L^\infty(\R^d)}.
 \end{align}

 Now we consider a good $Q(k)$. It follows from  \eqref{equ-LS-high-1} that $u$ can be extended to an analytic function on $\C^n$. Moreover,  by the Taylor expansion, we find
 \begin{align}\label{equ-LS-high-2}
  |u(k+z)|\leq e^{C|z|N}\|u\|_{L^\infty(Q(k))}, \;\;z\in \C^d.
 \end{align}
Meanwhile, by our assumptions on $E$, as well as the definition of the $F_{\alpha,\beta}-{(\gamma,L)}$ thickness, we have that  $E\cap Q(k)$ is compact;  $c_{F_{\alpha,\beta}}(E\cap Q(k))$  has a uniform positive lower bound with respect to $k\in \Z^d$. Thus, we can apply  Proposition \ref{prop-ana-high-2} to find a positive constant $C$, depending only on  $d$, $\alpha$, $\gamma,L$, such that when
$\varepsilon>0$ is small,
\begin{align}\label{4.50-w-10-17}
\sup_{x\in Q(k)}|u(x)|\leq \varepsilon \sup_{z\in \mathbb{D}_{10}(0)}|u(k+z)|  +\exp\left\{\frac{C(\log \frac{1}{\varepsilon})^2 }{(\log\log \frac{1}{\varepsilon})^{2(\alpha-1-)/3}} \right\}\sup_{x\in E\cap Q(k)}|u(x)|.
\end{align}
 In light of \eqref{equ-LS-high-2}, we can choose $\varepsilon \sim e^{-CN}$  such that
$$
\varepsilon \sup_{z\in \mathbb{D}_{10}(0)}|u(k+z)|\leq \frac{1}{2}\sup_{x\in Q(k)}|u(x)|.
$$
This, together with \eqref{4.50-w-10-17}, leads to
$$
\sup_{x\in Q(k)}|u(x)|\leq C\exp\left\{ CN^2 (\log (N+e))^{-\frac{2(\alpha-1-)}{3}} \right\}\sup_{x\in E\cap Q(k)}|u(x)|.
$$
Taking the sum over all good $Q(k)$ in the above leads to
$$
\sum_{\mbox{ \footnotesize good }Q(k)}
\|u\|^2_{L^\infty(Q(k))}\leq C^2\exp\left\{ 2CN^2 (\log (N+e))^{-\frac{2(\alpha-1-)}{3}} \right\}\sum_{\mbox{ \footnotesize good }Q(k)}(\sup_{x\in E\cap Q(k)}|u(x)|)^2.
$$
With \eqref{equ-LS-high-1.5}, the above leads to \eqref{4.47-w-w-g-g-10-17}. This completes the proof.
\end{proof}

\begin{remark}
    The result in \eqref{prop-LS-high} remains true if we replace $l^2L^\infty$ by $l^qL^p$ with $1\leq q, p\leq\infty$. The proof is  essentially the same.
\end{remark}

\subsection{Proofs of Theorem \ref{thm-bound-high} and  Theorem \ref{thm-Rn-high}}\

By Proposition \ref{prop-spec-high} and Proposition \ref{prop-LS-high}, we are allowed to employ  the adapted  Lebeau-Robbiano strategy used in proving Theorem  \ref{thm-ob-bound}/Theorem  \ref{thm-ob-R} to establish Theorem \ref{thm-bound-high} and Theorem \ref{thm-Rn-high}.
We point out that the only difference lies in the requirement for the convergence of series appeared in \eqref{equ-1023-11}.
In view of the spectral inequality \eqref{4.40-w-g-s-10-15} and the uncertainty principle \eqref{4.47-w-w-g-g-10-17}, To ensure this convergence we need

$$
\frac{2(\alpha-1-)}{3}>1.
$$
 Since we have assumed that   $\alpha>\frac{5}{2}$, the above holds clearly.


Therefore by repeating the arguments in the proof of Theorem \ref{thm-ob-bound}, we complete the proofs of  Theorem \ref{thm-bound-high} and  Theorem \ref{thm-Rn-high}.

 \section{Application to control}\label{section-contr}

In this section, we  give applications of the main theorems in Section \ref{mainresult-w-10-29} to the null-controllability for heat equations.
We only present the application of Theorem \ref{thm-Rn-high}, since the  analogous results can be obtained for the others by using a similar approach.

Before presenting the application, we introduce several spaces related to a subset $E\subset\R^d$.
First, $C(E)$ and $\mathcal{M}(E)$ denote  the space of all continuous functions on $E$
and the space of all Borel measures on $E$, respectively. Let
\begin{align*}
    l^2C(E)=\{f\in C(E;\R)  :\; \|f\|_{l^2C(E)} := \|\{\sup_{Q_k\cap E}|f|\}_{k\in\Z^d}\|_{l^2(\Z^d)}<\infty \}
\end{align*}
 and its dual space is as follows (see \cite{DualofLplq}):
\begin{align*}
l^2\mathcal{M}(E)=\{\mu\;\;\mbox{is a Borel measure on } E\;:\;
\|\mu\|_{l^2\mathcal{M}(E)} :=
\|\{|\mu|(Q_k\cap E)\}_{k\in\Z^d}\|_{l^2(\Z^d)}<\infty\}.
   \end{align*}

\begin{theorem}[Null-controllablity for the equation \eqref{equ-heat-high-Rn}]\label{thm-control}
Let   $F_{\alpha,\beta}$ be given by \eqref{equ-intro-F} with $\alpha>\frac{5}{2}$, $\beta= \frac{3}{2}$.
Assume that $E\subset \R^d$ is a closed and  $F_{\alpha,\beta}$-$(\gamma, L)$ thick set for some $\gamma>0$ and $\L>0$.
Then for each $T>0$ and each $u_0\in L^2(\R^d)$,  there is a control  $\mu$ in the space:
\begin{align*}
\mathcal{U}_T:=\{\mu\in L^\infty(0,T; l^2\mathcal{M}(\R^d))\;\;:\;\;\mbox{supp} \;\mu\subset [0,T]\times E
\},
   \end{align*}
such that
\begin{align}\label{6.1-w-10-29}
\|\mu\|_{ L^\infty(0,T; l^2\mathcal{M}(\R^d))}\leq C\|u_0\|_{L^2(\R)}
   \end{align}
for some $C>0$ independent of $u_0$, and such that the solution $u$ to the controlled equation
\begin{align}\label{6.2-w-10-29}
\partial_tu(t,x)-\Delta u(t,x)=\mu(t,x),\;\;t>0, x\in\R^d;\;\; u|_{t=0}=u_0
\end{align}
satisfies
\begin{align}\label{6.3-w-10-29}
u|_{t=T}=0\;\;\mbox{over}\;\;\R^d.
\end{align}
\end{theorem}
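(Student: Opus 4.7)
The plan is to deduce Theorem~\ref{thm-control} from the observability inequality of Theorem~\ref{thm-Rn-high} by a Hahn--Banach style duality, which is the standard route from observability to null-controllability. Since $-\Delta$ is self-adjoint and the heat semigroup is time-reversible in the sense that a backward solution of $-\partial_t\phi - \Delta\phi = 0$ on $[0,T]$ becomes a forward heat solution under $t\mapsto T-t$, the observability inequality applies equally well to the adjoint problem. For any $\phi_T \in L^2(\R^d)$, setting $\phi(t,\cdot) := e^{(T-t)\Delta}\phi_T$ and $\tilde\phi(s,\cdot) := \phi(T-s,\cdot)$, Theorem~\ref{thm-Rn-high} gives
\begin{equation*}
\|\phi(0,\cdot)\|_{L^2(\R^d)} \leq C_{obs}\int_0^T \Big\|\Big\{\sup_{x\in E\cap Q(k)}|\phi(t,x)|\Big\}_{k\in\Z^d}\Big\|_{l^2(\Z^d)} dt = C_{obs}\|\phi\|_{L^1(0,T;l^2C(E))}.
\end{equation*}

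Next, I would consider the linear observation map $\Lambda: L^2(\R^d) \to L^1(0,T;l^2C(E))$, $\Lambda\phi_T := \phi|_{[0,T]\times E}$, which the above inequality shows to be injective with a quantitative left-invertibility bound. On the range $\Lambda(L^2(\R^d))$ define
\begin{equation*}
\ell(\Lambda\phi_T) := -\langle u_0, \phi(0,\cdot)\rangle_{L^2(\R^d)},
\end{equation*}
well defined by injectivity of $\Lambda$; Cauchy--Schwarz combined with observability then yields $|\ell(\Lambda\phi_T)| \leq C_{obs}\|u_0\|_{L^2}\|\Lambda\phi_T\|_{L^1(0,T;l^2C(E))}$. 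Hahn--Banach extends $\ell$ to a bounded linear functional on all of $L^1(0,T;l^2C(E))$ with the same norm. Identifying this extension with its representative $\mu$ in the dual space, one obtains $\mu \in L^\infty(0,T; l^2\mathcal{M}(E))$ supported on $[0,T]\times E$ with $\|\mu\|_{L^\infty(0,T;l^2\mathcal M)} \leq C_{obs}\|u_0\|_{L^2}$, and the identity
\begin{equation*}
\int_0^T \langle \mu(t,\cdot), \phi(t,\cdot)\rangle\, dt = -\langle u_0, \phi(0,\cdot)\rangle,\qquad \forall\, \phi_T\in L^2(\R^d),
\end{equation*}
which, via Duhamel's formula for \eqref{6.2-w-10-29}, is precisely $\langle u(T,\cdot), \phi_T\rangle_{L^2} = 0$ for all $\phi_T\in L^2(\R^d)$, giving \eqref{6.3-w-10-29}.

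The main technical obstacle will be justifying the duality identification $\bigl(L^1(0,T; l^2C(E))\bigr)^* = L^\infty(0,T; l^2\mathcal{M}(E))$, since the Dunford--Pettis theorem for vector-valued $L^p$ spaces requires the dual space to have the Radon--Nikodym property, which can fail for $l^2\mathcal{M}(E)$ when $E$ is irregular. I expect to circumvent this either (a) by restricting to a separable subspace $L^2_0 \subset L^2(\R^d)$ adequate for the observation, producing a measure in $\mathcal M([0,T]\times E)$ with the $l^2\mathcal M$-amalgam norm bound, and then passing to a weak-$*$ limit; or (b) by working with the Fenchel--Rockafellar duality between the quadratic penalty functional $\tfrac12\|\phi(0)\|_{L^2}^2 + \langle u_0,\phi(0)\rangle$ on the predual side and the convex constraint on the measure side. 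The support condition $\mathrm{supp}\,\mu \subset [0,T]\times E$ is automatic from the construction, since $\ell$ only senses the restriction of $\phi$ to $E$, so any Hahn--Banach extension paired against $C$-test functions concentrates there.
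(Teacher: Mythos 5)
Your strategy is the same as the paper's: apply the observability inequality of Theorem~\ref{thm-Rn-high} to adjoint solutions $w(t)=e^{(T-t)\Delta}w_0$, define a bounded linear functional on the range of the observation map inside an $L^1$-in-time space with values in $l^2C(E)$, extend it by Hahn--Banach, represent the extension by an element of $L^\infty$-in-time with values in $l^2\mathcal M(E)$ via the amalgam duality $(l^2C(E))^*=l^2\mathcal M(E)$, and read off \eqref{6.3-w-10-29} by pairing with adjoint data. (Your sign convention $\ell(\Lambda\phi_T)=-\langle u_0,\phi(0,\cdot)\rangle$ is the right one for the Duhamel identity.)

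There is, however, one step that fails as written: you take the observation window to be all of $[0,T]$. For general $\phi_T\in L^2(\R^d)$ one only has $\|e^{(T-t)\Delta}\phi_T\|_{l^2L^\infty(\R^d)}\lesssim (T-t)^{-d/4}\|\phi_T\|_{L^2(\R^d)}$, which is not integrable up to $t=T$ once $d\geq 4$; moreover at $t=T$ the adjoint solution is merely $L^2$, so its restriction to the (possibly Lebesgue-null) set $E$ is not even defined. Hence $\Lambda\phi_T$ need not belong to $L^1(0,T;l^2C(E))$ and your functional $\ell$ does not live on the space you dualize. The paper repairs this by observing only on $J=[0,T-\varepsilon_0]$ (the observability inequality with final time $T-\varepsilon_0$ suffices) and extending $\mu$ by zero on $(T-\varepsilon_0,T]$; you need the same truncation. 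A second, smaller point: your closing step ``via Duhamel's formula'' presumes that the solution $u$ of \eqref{6.2-w-10-29} with measure-valued right-hand side is defined and that the pairing $\int_0^T\langle\mu,w\rangle\,\d t$ can be integrated by parts against it; the paper justifies this by noting $\mu\in L^\infty(0,T;l^2H^{-\sigma}(\R^d))$ with $\sigma=[\tfrac d2]+1$ and passing to the limit through Fourier truncations $P_N$. Your concerns about the Radon--Nikodym property are legitimate, but this is precisely the part the paper treats as standard (amalgam duality plus Riesz representation with weak-$*$ measurable representatives), and either of your proposed workarounds would serve.
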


\begin{remark}
    The proof of the above theorem is based on  the standard duality argument. Some ideas were borrowed from \cite{Burq2023propagation}.
\end{remark}

\begin{proof}[Proof of Theorem \ref{thm-control}]
We arbitrarily fix $T>0$ and $u_0\in L^2(\R^d)$. Given $w_0\in L^2(\R^d)$, $w(t):=e^{(T-t)\Delta}w_0$ is the solution to the following dual equation over $(0,T)$:
\begin{align}\label{equ-backheat}
\partial_tw+\Delta w=0, \quad w|_{t=T}=w_0.
\end{align}
Then according to Theorem \ref{thm-Rn-high}, the solution $w$ satisfies
\begin{align}\label{equ-nullcontro-1}
 \|w|_{t=0}\|_{L^2(\R^d)}\leq C\int_J\|\{\sup_{x\in E\cap Q(k)}e^{(T-t)\Delta}w_0(x)\}_{k\in \Z^d}\|_{l^2(\Z^d)}\d t
\end{align}
for some $C>0$ independent of $w_0$, where $J:=[0,T-\varepsilon_0]$ and  we   fix $\varepsilon_0>0$ small enough.

Next, define the   linear space
$$
X=\{e^{(T-t)\Delta}w_0|_{J\times E}: w_0\in L^2(\R^d)\}.
$$
It is clear that for each $w_0\in L^2(\R^d)$,  the restriction $e^{(T-t)\Delta}w_0|_{J\times E}$ belongs to the space $C(J\times E)$. Meanwhile, there is $\hat{C}>0$ such that for $t\in J$
\begin{align}\label{equ-nullcontro-0} \|e^{(T-t)\Delta}w_0\|_{l^2L^\infty(\R^d)}
&=\|G_{T-t}(x)*w_0\|_{l^2L^\infty(\R^d)}\nonumber\\&\leq \hat{C}\|G_{T-t}(x)\|_{l^2L^\infty(\R^d)}\|w_0\|_{L^2(\R^d)}\leq \hat{C}\|w_0\|_{L^2(\R^d)},\end{align}
where $G_t(x)=(4\pi t)^{-n/4}e^{\frac{-|x|^2}{4t}}$ is the heat kernel.
Thus, $X$ is a
 normed subspace of $L^1(J;l^2C(E))$ with the norm
\begin{align}\label{6.7-w-10-29}
\|w\|_X:=\|w\|_{L^1(J;l^2C(E))}=
\int_J  \|\{\sup_{x\in E\cap Q(k)}|w(t,x)|\}_{k\in\Z^d}\|_{l^2(\Z^d)}  \d t,\;\;w\in X.
\end{align}

Then,  we define a linear functional $\mathcal{F}$ over $X$ by
\begin{align}\label{6.8-w-10-29}
\mathcal{F}(w):=(w|_{t=0}, u_0)_{L^2(\R^d)},\;\;w\in X.
\end{align}
It follows from \eqref{6.8-w-10-29}, \eqref{equ-nullcontro-1} and \eqref{6.7-w-10-29} that
\begin{align}\label{6.9-w-10-29}
\|\mathcal{F}\|_{\mathcal{L}(X;\R) }\leq C\|u_0\|_{L^2(\R^d)},
\end{align}
where $C$ is given by \eqref{equ-nullcontro-1}.
According to  the Hahn-Banach theorem, there exists an $\hat{\mathcal{F}}\in \mathcal{L}(L^1(J;l^2C(E)); \R)$ such that
\begin{align}\label{6.10-w-10-29}
\hat{\mathcal{F}}=\mathcal{F}\;\;\mbox{over}\; X;\;\;\|\hat{\mathcal{F}}\|_{\mathcal{L}(L^1(J;l^2C(E));\R)}=\|\mathcal{F}\|_{\mathcal{L}(X;\R) }.
\end{align}
Then by
 the Riesz representation theorem, there exists $\mu\in L^\infty(J;l^2\mathcal{M}(E))$  such that
 \begin{align}\label{6.11-w-10-29}
\hat{\mathcal{F}}(\varphi)=\int_{J\times E}\varphi(t,x)\d \mu\;\;\mbox{for all}\;\;\varphi\in L^1(J;l^2C(E))
\end{align}
and
 \begin{align}\label{6.12-w-10-29}
\|\hat{\mathcal{F}}\|_{\mathcal{L}(L^1(J;l^2C(E));\R)}
=\|\mu\|_{ L^\infty(J;l^2\mathcal{M}(E))}.
\end{align}

 By the zero extension, we can treat $\mu$ as  an element in $L^\infty((0,T);l^2\mathcal{M}(\R^d))$
 such that $\mbox{supp}\; \mu\subset J\times E$. Thus, by \eqref{6.8-w-10-29},  \eqref{6.10-w-10-29}
 and \eqref{6.11-w-10-29}, we find
 \begin{align}\label{equ-control-mu}
(\varphi|_{t=0},u_0)_{L^2(\R^d)}=\int_0^T \varphi(t,x)\d \mu\;\;\mbox{for all}\;\;\varphi\in X.
\end{align}

Now we claim the solution $u$ to the equation \eqref{6.2-w-10-29} (with the above $\mu$) satisfies
\eqref{6.3-w-10-29}.
For this purpose, we first notice that $\mu\in L^\infty((0,T);l^2H^{-\sigma}(\R^d))$
(with $\sigma=[\frac{d}{2}]+1$), because of
 the Sobolev embedding $H^\sigma(\R^d)\hookrightarrow C(\R^d)$. Next, we arbitrarily fix $w_0\in L^2(\R^d)$ and write $w$ for the solution of \eqref{equ-backheat}. We let, for  every $N>0$,
  $P_N$ be the Fourier projection operator defined by $\widehat{P_Nf}=1_{|\xi|\leq N}\widehat{f}(\xi), \xi\in \R^d$.
  Let $\mu_N:=P_N\mu$, $u_N:=P_N u$ and $w_N:=P_Nw$. Then
  $\mu_N, u_N, w_N\in L^2((0,T);H^\infty(\R^d)).$
  Thus, we can use \eqref{6.2-w-10-29} and \eqref{equ-backheat} to get
\begin{align}\label{equ-nullcontro-3}
0&=\int_0^T((\partial_t+\Delta)w_N, u_N)_{L^2(\R^d)}\d t=(w_N, u_N)\big|_0^T+\int_0^T(w_N,(-\partial_t+\Delta)u
_N)_{L^2(\R^d)}  \d t\nonumber\\
&=(P_Nw_0, u_N|_{t=T})_{L^2(\R^d)}-(w_N(0), P_N u_0))_{L^2(\R^d)}-\int_0^T(w_N,\mu_N)_{L^2(\R^d)}\d t.
\end{align}
Meanwhile, we have that as $N\to\infty$,
\begin{align*}
 &P_Nw_0\to  w_0;\;\; u_N|_{t=T}\to u|_{t=T};\;\; P_N u_0\to u_0 \mbox{ in } L^2(\R^d),\\
 &w_N\to w \mbox{ in }L^2(J; l^2H^\sigma(\R^d)),\\
 &\mu_N\to \mu \mbox{ in } L^2(J; l^2H^{-\sigma}(\R^d)).
\end{align*}
(Here we used fact that $w\in L^2(J; l^2H^\sigma(\R^d))$, which can be proved by the similar way to
that showing \eqref{equ-nullcontro-0}.) Thus we can pass to the limit
for $N\to\infty$ in \eqref{equ-nullcontro-3} to get
\begin{align}\label{equ-1029-1}
0=(w_0,u|_{t=T})_{L^2(\R^d)}-(w|_{t=0}, u_0)_{L^2(\R^d)}  -  \int_0^Tw(t,x)\d \mu.
\end{align}
However, by \eqref{equ-control-mu} (where $\varphi$ is replaced by $w$), we have
\begin{align}\label{equ-1029-2}
\int_0^T w(t,x)\d \mu=(w_0,e^{T\Delta}u_0)_{L^2(\R^d)}.
\end{align}
Combining \eqref{equ-1029-1}-\eqref{equ-1029-2} yields that  for any $w_0\in L^2(\R^d)$,
$$
(w_0,u|_{t=T})_{L^2(\R^d)} =0,
$$
which leads to \eqref{6.3-w-10-29}.

Besides,
\eqref{6.1-w-10-29} follows from  \eqref{6.12-w-10-29}, \eqref{6.9-w-10-29} and \eqref {6.10-w-10-29}
at once. This completes the proof of Theorem \ref{thm-control}.
\end{proof}

\section*{Acknowledgment}
S. Huang, G. Wang and M. Wang are partially supported by the National Natural Science Foundation of China under grants No.12171442, 12371450 and 12171178, respectively.
S. Huang would like to thank Prof. Baowei Wang for very useful discussions on the construction in Proposition \ref{thm-1017-1.1}. We are deeply grateful to Professors A. Walton  Green, K{\'e}vin Le Balc'h,  J{\'e}r{\'e}my Martin and Marcu-Antone Orsoni, who are
the authors of \cite{walton2024observability}, for kindly pointing out an error in the proof of Proposition \ref{prop-spec-high}, enabling us to make timely corrections to the proof.




\end{document}